\newtheorem{thm}{Theorem}[section]
\newtheorem{prop}[thm]{Proposition}
\newtheorem{lem}[thm]{Lemma}
\newtheorem{rema}[thm]{Remark}
\newtheorem{conj}[thm]{Conjecture}
\theoremstyle{remark}
\numberwithin{equation}{section}
\newcommand{\Z}{\mathbb{Z}}
\newcommand{\C}{\mathbb{C}}
\newcommand{\h}[0]{\mathfrak{h}} 
\newcommand{\Lnu}[0]{\hat{L}_\nu}
\begin{document}

\title[Twisted modules for free fermions and two conjectures]{On permutation-twisted free fermions and two conjectures}

\author{Katrina Barron}
\address{Department of Mathematics, University of Notre Dame,
Notre Dame, IN 46556}
\email{kbarron@nd.edu}
\author{Nathan Vander Werf}
\address{Department of Mathematics, University of Notre Dame,
Notre Dame, IN 46556}
\email{nvanderw@nd.edu}
\subjclass{Primary 17B68, 17B69, 17B81, 81R10, 81T40, 81T60}

\date{September 30, 2013}

\keywords{Vertex operator superalgebras, superconformal field 
theory}

\begin{abstract}
We conjecture that the category of permutation-twisted modules for a multi-fold tensor product vertex operator superalgebra and a cyclic permutation of even order is isomorphic to the category of parity-twisted modules for the underlying vertex operator superalgebra.  This conjecture is based on our observations of the cyclic permutation-twisted modules for free fermions as we discuss in this work, as well as previous work of the first author constructing and classifying permutation-twisted modules for tensor product vertex operator superalgebras and a permutation of odd order.   In addition, we observe that the transposition isomorphism for two free fermions corresponds to a lift of the $-1$ isometry of the integral lattice vertex operator superalgebra corresponding to two free fermions under boson-fermion correspondence.  We conjecture that all even order cyclic permutation automorphisms of free fermions can be realized as lifts of lattice isometries under boson-fermion correspondence.  We discuss the role of parity stability in the construction of these twisted modules and prove that in general, parity-unstable weak twisted modules for a vertex operator superalgebras come in pairs that form orthogonal invariant subspaces of parity-stable weak twisted modules, clarifying their role in many other settings.
\end{abstract}

\maketitle


\section{Introduction and preliminaries}

Let $V$ be a  vertex operator (super)algebra, and for a fixed positive integer $k$,  consider the tensor product vertex operator (super)algebra $V^{\otimes k}$ (see \cite{FLM3}, \cite{FHL}).  Any 
element $g$ of the symmetric group $S_k$ acts in a natural way on $V^{\otimes k}$ as a vertex operator (super)algebra automorphism, and thus it is appropriate  to consider $g$-twisted 
$V^{\otimes k}$-modules.    This is the setting for permutation orbifold conformal field theory, and for permutation orbifold superconformal field theory if the vertex operator superalgebra is not just super, but is also supersymmetric, i.e. is a representation of a Neveu-Schwarz super-extension of the Virasoro algebra.

In \cite{BDM}, the first author along with Dong and Mason constructed and classified the $g$-twisted $V^{\otimes k}$-modules for $V$ a vertex operator algebra and $g \in S_k$.  In particular, it was proved that the category of weak $(1 \; 2\; \cdots \; k)$-twisted $V^{\otimes k}$-modules is isomorphic to the category of weak $V$-modules.    In \cite{B-superpermutation-odd}, the first author extended this result to a construction and classification of $(1 \; 2 \; \cdots \; k)$-twisted $V^{\otimes k}$-modules for $V$ a vertex operator superalgebra and $k$  odd.  However, as was shown by Barron in \cite{B-superpermutation-odd}, the results of \cite{BDM} for permutation-twisted tensor product vertex operator algebras and the results of Barron for the odd order case in the super setting, do not extend in a natural way to the super setting for permutation automorphisms of even order.   Rather, the construction of the twisted modules for even order permutations is fundamentally different whenever $V$ has nontrivial odd subspace.  

In this paper, following first the construction of \cite{DZ}, we study the case of $V^{\otimes k}_{fer}$ a $k$-fold tensor product of the free fermion vertex operator superalgebra for $k$ even and $(1 \; 2 \; \cdots \; k)$ acting as a vertex operator superalgebra automorphism to gain further insight into the problem.  These models, along with observations from \cite{B-superpermutation-odd}, provide the basis for a conjecture we make on the nature of the classification of permutation-twisted modules in general.  Our main conjecture is that for $k$ even, the category of weak $(1 \; 2 \; \cdots \; k)$-twisted $V^{\otimes k}$-modules is isomorphic to the category of weak parity-twisted $V$-modules.   This contrasts to the case when $k$ is odd where, as shown in \cite{B-superpermutation-odd}, the category of $(1 \; 2 \; \cdots \; k)$-twisted $V^{\otimes k}$-modules is isomorphic to the category of weak untwisted $V$-modules.

Permutation-twisted modules for free fermions, as well as parity-twisted modules for free fermions, give examples of how ``parity-unstable" twisted modules arise.    Motivated by these examples, we prove that parity-unstable weak twisted modules arise as orthogonal pairs of invariant subspaces of a parity-stable weak twisted module.  Furthermore, if these two parity-unstable weak twisted modules are ordinary then they will always have the same graded dimension but are not isomorphic, meaning they can not be detected via techniques which only produce the graded dimensions of the twisted modules.  This simplifies much of the work in \cite{DZ2005}, \cite{DZ2010}, \cite{DH}, and shows how from a categorical standpoint, all modules should be defined so as to be parity-stable, and then what is referred to as ``parity-unstable modules" in, for instance \cite{DZ2005}, \cite{DZ2010}, \cite{DH}, should be referred to as ``parity-unstable invariant subspaces" of the parity-stable modules.

We use ``boson-fermion correspondence" to formulate another conjecture regarding whether one can realize the permutation-twisted modules for free fermions in the even cyclic case via two different constructions.  Boson-fermion correspondence refers to the fact that the two free fermion vertex operator superalgebra is isomorphic to the rank one lattice vertex operator superalgebra with length one generator, i.e. one fermion propagating on a circle.  Thus one can use the work of \cite{DZ} to construct the permutation-twisted modules for permutation-twisted free fermions, or one can try to use boson-fermion correspondence and the theory of twisted modules for a lattice vertex operator superalgebra and a lift of a lattice isometry as developed in \cite{DL2} and \cite{Xu}.  If  the automorphism on the lattice is the lift of a lattice isometry, then one has an overlap of construction techniques which can potentially give insight into the general theory of the construction and classification of twisted modules.  

However, then the question arises:  When does the permutation correspond to a lift of a lattice isometry?  In this paper we provide an example of when it does, and make a conjecture that any cyclic permutation of even order acting on free fermions is conjugate to a lift of a lattice isometry under boson-fermion correspondence.  This gives an alternative construction to even order cyclic permutation-twisted modules for free fermion vertex operator superalgebras based on twisted lattice constructions, i.e. space-time orbifold constructions, versus worldsheet orbifold constructions that has potentially for being extended in general to further explore the connection between the space-time geometry of the lattice versus the worldsheet geometry of propagating strings, following, for instance \cite{BHL}, but in a new and different setting which explicitly involves the supergeometry underlying vertex operator superalgebras.  This is particularly interesting and relevant for cases involving supersymmetric vertex operator superalgebras, cf. \cite{B-announce} -- \cite{B-mirror-maps}.  

It is important, at the same time to note the following:  We show  (see Remark \ref{doubling-k-remark} below) that the construction of twisted modules for lattice vertex operator superalgebras and a lift of a lattice isometry can not be used in general to construct the permutation-twisted modules for lattice vertex operator superalgebras directly for the case of permutations of even order.  This is because if $\nu$ is a permutation isometry on the lattice of even order $k$, due to certain properties of $\nu$, which we note in Remark \ref{doubling-k-remark},  the construction of \cite{DL2}, \cite{Xu}, only holds for $\nu$ lifted to an automorphism $\hat{\nu}$ of the vertex operator superalgebra such that $\hat{\nu}$ is of order $2k$.  Thus it is impossible for $\hat{\nu}$  to correspond to the permutation automorphism of order $k$.  Details are given in Section \ref{lattice-vosa-section} and Section \ref{general-twisted-section}, and in particular in Remark \ref{doubling-k-remark}.   In particular, we give general criteria for a lattice isometry of order $k$ to lift to a vertex operator superalgebra  automorphism of order $k$.   In the process of this, we clarify aspects of the construction of twisted modules for a lift of a lattice isometry and a positive definite integral lattice following \cite{DL2} and \cite{Xu}.

\subsection{Background}

Twisted vertex operators were discovered and used in \cite{LW}.   Twisted modules for vertex operator algebras arose in the work of I. Frenkel, J. Lepowsky and A. Meurman \cite{FLM1}, \cite{FLM2}, \cite{FLM3} for the case of a lattice vertex operator algebra and the lattice isometry $-1$, in the course of the construction of the moonshine module vertex operator algebra (see also \cite{Bo}). This structure came to be understood as an ``orbifold model" in the sense of conformal field theory and string theory.  Twisted modules are the mathematical counterpart of ``twisted sectors", which are the basic building blocks of orbifold models in conformal field theory and string theory (see \cite{DHVW1}, \cite{DHVW2}, \cite{DFMS}, \cite{DVVV}, \cite{DGM}, as well as \cite{KS}, \cite{FKS}, \cite{Ba1}, \cite{Ba2}, \cite{BHS}, \cite{dBHO}, \cite{HO}, \cite{GHHO}, \cite{Ba3} and \cite{HH}).  Orbifold theory plays an important role in conformal field theory and in superextensions, and is also a way of constructing a new vertex operator (super)algebra from a given one.  

Formal calculus arising {}from twisted vertex operators associated to a an even lattice was systematically developed in \cite{Lepowsky1985}, \cite{FLM2}, \cite{FLM3} and \cite{Le2}, and the twisted Jacobi identity was formulated and shown to hold for these operators (see also \cite{DL2}).  These results led to the introduction of the notion of $g$-twisted $V$-module \cite{FFR}, \cite{D}, for $V$ a vertex operator algebra and $g$ an automorphism of $V$.  This notion records the properties of twisted operators obtained in \cite{Lepowsky1985}, \cite{FLM1}, \cite{FLM2}, \cite{FLM3} and \cite{Le2}, and provides an axiomatic definition of the notion of twisted sectors for conformal field theory.  In general, given a vertex operator algebra $V$ and an automorphism $g$ of $V$, it is an open problem as to how to construct a $g$-twisted $V$-module.

The focus of this paper is the study of permutation-twisted sectors for free fermion vertex operator superalgebras.    A theory of twisted operators for integral lattice vertex operator superalgebras and finite automorphisms that are lifts of a lattice isometry were studied in \cite{DL2} and \cite{Xu}, and the general theory of twisted modules for vertex operator superalgebras was developed by Li in \cite{Li-twisted}.  Certain specific examples of permutation-twisted sectors in superconformal field theory have been studied from a physical point of view in, for instance, \cite{FKS}, \cite{BHS}, \cite{Maio-Schellekens1}, \cite{Maio-Schellekens2}.

The case of $V$ a vertex operator superalgebra and $V\otimes V$ being permuted by the $(1 \; 2)$ transposition is the mirror map if $V \otimes V$, in addition to being a vertex operator superalgebra is also N=2 supersymmetric (see for example, \cite{B-n2twisted}).  This is one of the motivations for studying this construction in detail in the case of free fermions.  Although the free fermion vertex operator superalgebras are not N=2 supersymmetric, they can be used to achieve supersymmetry via tensoring with an appropriate bosonic theory as, for example, in \cite{B-n2twisted}.   In particular,  a mirror-twisted module for an N=2 supersymmetric vertex operator superalgebra is naturally a representation of the ``mirror-twisted N=2 superconformal Lie superalgebra", cf. \cite{B-varna} and \cite{B-n2twisted}.   In \cite{B-mirror-maps}, further classifications and constructions involving this transposition mirror map as well as other mirror maps for N=2 supersymmetric vertex operator superalgebras arising from free fermions are given.

\subsection{The notion of vertex operator superalgebra}\label{vosa-definitions-section}

In this section, we recall the notion of vertex operator superalgebra,  following
the notation and terminology of, for instance \cite{LL}, \cite{B-vosas}. Let $x, x_0, x_1, x_2,$ etc., denote commuting independent formal variables.
Let $\delta (x) = \sum_{n \in \Z} x^n$.  We will use the binomial
expansion convention, namely, that any expression such as $(x_1 -
x_2)^n$ for $n \in \C$ is to be expanded as a formal power series in
nonnegative integral powers of the second variable, in this case
$x_2$.

A {\it vertex operator superalgebra} is a $\frac{1}{2}\Z$-graded vector space
$V=\coprod_{n\in \frac{1}{2} \Z}V_n$,
satisfying ${\rm dim} \, V < \infty$ and $V_n = 0$ for $n$ sufficiently negative, 
that is also $\Z_2$-graded by {\it sign} 
\[V = V^{(0)} \oplus V^{(1)}, \quad \mbox{with} \quad V^{(j)} = \coprod_{n \in \Z + \frac{j}{2}} V_n,\]
and equipped with a linear map
\begin{equation}
V \longrightarrow (\mbox{End}\,V)[[x,x^{-1}]], \qquad 
v \mapsto Y(v,x)= \sum_{n\in\Z}v_nx^{-n-1} ,
\end{equation}
and with two distinguished vectors ${\bf 1}\in V_0$, (the {\it vacuum vector})
and $\omega\in V_2$
(the {\it conformal element}) satisfying the following conditions for $u, v \in
V$:
\begin{eqnarray}
& &u_nv=0\ \ \ \ \ \mbox{for $n$ sufficiently large};\\
& &Y({\bf 1},x)=1;\\
& &Y(v,x){\bf 1}\in V[[x]]\ \ \ \mbox{and}\ \ \ \lim_{x\to
0}Y(v,x){\bf 1}=v;
\end{eqnarray}
\begin{multline}
x^{-1}_0\delta\left(\frac{x_1-x_2}{x_0}\right) Y(u,x_1)Y(v,x_2) \\
- (-1)^{|u||v|}
x^{-1}_0\delta\left(\frac{x_2-x_1}{-x_0}\right) Y(v,x_2)Y(u,x_1)\\
= x_2^{-1}\delta \left(\frac{x_1-x_0}{x_2}\right) Y(Y(u,x_0)v,x_2)
\end{multline}
(the {\it Jacobi identity}), where $|v| = j$ if $v \in V^{(j)}$ for $j \in \Z_2$;
\begin{equation}
[L(m),L(n)]=(m-n)L(m+n)+\frac{1}{12}(m^3-m)\delta_{m+n,0}c
\end{equation}
for $m, n\in \Z,$ where  $L(n)=\omega_{n+1}$, for $n\in \Z$,  i.e., $Y(\omega,x)=\sum_{n\in\Z}L(n)x^{-n-2}$,
and $c \in \mathbb{C}$ (the {\it central charge} of $V$);
\begin{eqnarray}
& &L(0)v=nv=(\mbox{wt}\,v)v \ \ \ \mbox{for $n \in  \frac{1}{2} \Z$ and $v\in V_n$}; \\
& &\frac{d}{dx}Y(v,x)=Y(L(-1)v,x). 
\end{eqnarray}
This completes the definition. We denote the vertex operator superalgebra just
defined by $(V,Y,{\bf 1},\omega)$, or briefly, by $V$.


Given two vertex operator superalgebras $(V_1, Y_1, \mathbf{1}^{(1)}, \omega^{(1)})$ and $(V_2, Y_2, \mathbf{1}^{(2)}, \omega^{(2)})$, we have that $(V_1 \otimes V_2, \, Y,  \, \mathbf{1}^{(1)} \otimes \mathbf{1}^{(2)}, \, \omega^{(1)} \otimes \mathbf{1}^{(2)} + \mathbf{1}^{(1)} \otimes \omega^{(2)})$ is a vertex operator superalgebra, where $Y$ is given by 
\begin{equation}\label{define-tensor-product}
Y(u_1 \otimes u_2, x) (v_1 \otimes v_2) = (-1)^{|u_2||v_1|} Y_1(u_1, x) v_1\otimes Y_2(u_2, x)v_2,
\end{equation}
for $u_1 \otimes u_2, \, v_1 \otimes v_2 \in V_1 \otimes V_2$.  

\begin{rema}\label{parity-grading-on-V}
{\em
As a consequence of the definition of vertex operator superalgebra, we have that $\mathrm{wt} (v_n u ) = \mathrm{wt} u + \mathrm{wt} v - n -1$, for $u,v \in V$ and $n \in \Z$.  This implies that $v_n \in (\mathrm{End} \, V)^{(j)}$ if and only if $v \in V^{(j)}$ for $j \in \mathbb{Z}_2$, i.e. that $v_n V^{(j)} \subseteq V^{(j + |v|) \mathrm{mod} \, 2}$.  
}
\end{rema}

\subsection{Automorphisms of vertex operator superalgebras and the notion of twisted module}\label{twisted-definitions-section}

An {\it automorphism} of a vertex operator superalgebra $V$ is a linear
automorphism $g$ of $V$ preserving ${\bf 1}$ and $\omega$ such that
the actions of $g$ and $Y(v,x)$ on $V$ are compatible in the sense
that
\begin{equation}\label{automorphism}
g Y(v,x) g^{-1}=Y(gv,x)
\end{equation}
for $v\in V.$ Then $g V_n\subset V_n$ for $n\in  \frac{1}{2} \mathbb{Z}$. 

If $g$
has finite order, $V$ is a direct sum of the eigenspaces $V^j$ of $g$,
\begin{equation}
V=\coprod_{j\in \Z /k \Z }V^j,
\end{equation}
where $k \in \mathbb{Z}_+$ is a period of $g$ (i.e., $g^k = 1$ but $k$
is not necessarily the order of $g$) and $V^j=\{v\in V \; | \; g v= \eta^j v\}$,
for $\eta$ a fixed primitive $k$th root of unity.

Note that we have the following $\delta$-function identity
\begin{equation}\label{delta-identity}
x_2^{-1} \delta \left (\frac{x_1 - x_0}{x_2} \right) \left( \frac{x_1 - x_0}{x_2} \right)^{k} = x_1^{-1} \delta \left (\frac{x_2 + x_0}{x_1} \right) \left( \frac{x_2 + x_0}{x_1} \right)^{-k}
\end{equation}
for any $k \in \mathbb{C}$.

Next we review the notions of weak, weak admissible and
ordinary $g$-twisted $V$-module for a vertex operator superalgebra $V$
and an automorphism $g$ of $V$ of finite order $k$, as well as the notion of ``parity stability" for these various kinds of $g$-twisted $V$-modules.   These are the ``standard" definitions, following, for instance \cite{DZ2005}, \cite{DZ2010}, \cite{DH}.   However, below we argue (see Remark \ref{parity-stability-remark}), that the more natural notion of ``weak $g$-twisted $V$-module" should be that of a ``weak parity-stable $g$-twisted $V$-module", and similarly for the notions of weak admissible or ordinary $g$-twisted $V$-module.

Let $(V,Y,{\bf 1},\omega)$ be a vertex operator superalgebra
and let $g$ be an automorphism of $V$ of period $k \in
\mathbb{Z}_+$. A {\it weak $g$-twisted $V$-module} is a 
vector space $M$ equipped with a linear map
\begin{equation}
V \longrightarrow  (\mbox{End}\,M)[[x^{1/k},x^{-1/k}]], \qquad
v \mapsto  Y^g (v,x)=\sum_{n\in \frac{1}{k}\Z }v_n^g x^{-n-1} ,
\end{equation}
satisfying the following conditions for $u,v\in V$ of homogeneous sign and $w\in M$:
\begin{eqnarray}
& &v_n^g w=0\ \ \ \mbox{for $n$ sufficiently large};\\
& &Y^g ({\bf 1},x)=1;
\end{eqnarray}
\begin{multline}\label{twisted-jacobi}
x^{-1}_0\delta\left(\frac{x_1-x_2}{x_0}\right)
Y^g(u,x_1)Y^g(v,x_2)\\
-  (-1)^{|u||v|} x^{-1}_0\delta\left(\frac{x_2-x_1}{-x_0}\right) Y^g
(v,x_2)Y^g (u,x_1)\\
= x_2^{-1}\frac{1}{k}\sum_{j\in \Z /k \Z}
\delta\left(\eta^j\frac{(x_1-x_0)^{1/k}}{x_2^{1/k}}\right)Y^g (Y(g^j
u,x_0)v,x_2)
\end{multline}
(the {\it twisted Jacobi identity}) where $\eta$ is a fixed primitive
$k$th root of unity.  

We denote a weak $g$-twisted $V$-module by $(M, Y^g)$, or briefly, by $M$.  

If we take $g=1$, then we obtain the notion of weak $V$-module.  Note that the notion of weak $g$-twisted $V$-module for a vertex operator superalgebra is equivalent to the notion of $g$-twisted $V$-module for $V$ as a vertex superalgebra, cf. \cite{Li-twisted}.  In particular, the term ``weak" simply implies that we are making no assumptions about a grading on $M$.

It follows from the twisted Jacobi identity that
\begin{equation}\label{cosetrelation}
Y^g(v,x)=\sum_{n\in  \Z + \frac{j}{k}}v_n^g x^{-n-1}
\end{equation}
for $j\in \Z/k\Z$ and $v\in V^j$, and thus we have 
\begin{equation}
Y^g (gv,x) = \lim_{x^{1/k} \rightarrow \eta^{-1} x^{1/k}} Y^g(v, x),
\end{equation}
where the limit stands for formal substitution. 


Let $(M_1, Y^g_1)$ and $(M_2, Y^g_2)$ be two weak $g$-twisted $V$-modules.  A {\it $g$-twisted $V$-module homomorphism} from $M_1$ to $M_2$, is a linear map $f: M_1 \longrightarrow M_2$ such that 
\begin{equation}
f(Y_1^g(v,x)w) = Y^g_2(v,x) f(w)
\end{equation}
for $v \in V$ and $w \in M_1$.

A weak $g$-twisted $V$-module may or may not have additional grading structures.  These possible grading structures fall into two different types:  1.  Those involving the $\mathbb{Z}_2$ grading structure, i.e. by sign or parity.  2.  Those involving the weight grading structure.  The first type leads to the notion of parity stability for a weak $g$-twisted $V$-module which detects whether the module has a $\mathbb{Z}_2$-grading that is compatible with the $\mathbb{Z}_2$-grading of $V$.  The second type leads to the notion of weak admissible $g$-twisted $V$-module which detects whether the module has a $\frac{1}{2k}\mathbb{Z}$-grading compatible with the $\frac{1}{2} \mathbb{Z}$-grading of $V$ where $k$ is the order of $g$.  This second type also leads to the notion of ordinary $g$-twisted $V$-modules which detects whether the $g$-twisted $V$-module is graded by eigenvectors of the twisted $L^g(0)$ operator.  

We now give the details for these different module definitions.

A {\em weak admissible} $g$-twisted $V$-module is a weak $g$-twisted
$V$-module $M$ which carries a $\frac{1}{2k}{\Z}$-grading
\begin{equation}\label{m3.12}
M=\coprod_{n\in\frac{1}{2k}\Z}M(n)
\end{equation}
such that $v^g_mM(n)\subseteq M(n+\mathrm{wt} \; v-m-1)$ for homogeneous $v\in V$, and $M(n) = 0$ for $n$ sufficiently small. 
If $g=1,$ we have the notion of weak admissible $V$-module.


An {\it ordinary $g$-twisted $V$-module}  is a weak $g$-twisted $V$-module $M$ which is $\C$-graded
\begin{equation}
M=\coprod_{\lambda \in \C}M_\lambda
\end{equation}
such that for each $\lambda$, $\dim M_{\lambda}< \infty $ and $M_{n/k
+\lambda}=0$ for all sufficiently negative integers $n$.  In addition,
\begin{equation}\label{L^g-grading}
L^g (0) w=\lambda w \qquad \mbox{for $w \in M_\lambda$},
\end{equation}
where $L^g(n) = \omega^g_{n+1}$ are the modes for the twisted vertex operator corresponding to the Virasoro element.
We will usually refer to an ordinary $g$-twisted $V$-module, as just a $g$-twisted $V$-module.  
We call a $g$-twisted $V$-module $M$ {\it simple} or {\it irreducible}
if the only submodules are 0 and $M$.  

For an ordinary $g$-twisted $V$-module, $M$, we have the notion of {\it graded dimension} or {\it $q$-dimension}, denoted $\mathrm{dim}_q M$, and defined to be
\begin{equation}
\mathrm{dim}_q M = tr_M q^{L^g(0) - c/24} = q^{-c/24} \sum_{\lambda \in \mathbb{C}} (\mathrm{dim} \, M_\lambda) q^\lambda.
\end{equation}

A weak, weak admissible or ordinary $g$-twisted $V$-module $M$ is said to be {\it parity stable} if there exists a $\mathbb{Z}_2$-grading on $M$ that is compatible with the $\mathbb{Z}_2$-grading of $V$ in the following sense:
\begin{equation}
v_m^g M^{(j)} \subseteq M^{(j + |v|) \mathrm{mod} \, 2}.
\end{equation}
In this case, setting $|w| = j$ for $ w \in M^{(j)}$, defining the parity map on $M$ by 
\begin{equation}\label{defining-sigma-M}
\sigma_M : M  \longrightarrow M, \qquad
 w \mapsto  (-1)^{|w|} w,
\end{equation}
and defining $Y^g \circ \sigma_V$ by 
\begin{equation}
Y^g \circ \sigma_V (v, x) = Y^g (\sigma_V(v), x) = (-1)^{|v|} Y^g(v, x),
\end{equation} 
we have that $(M, Y^g)$ is isomorphic to $(\sigma_M (M), Y^g \circ \sigma_V)$ as weak (or weak admissible or ordinary) $g$-twisted $V$-modules.  Note that a vertex operator superalgebra $V$ is always a parity-stable $V$-module by Remark \ref{parity-grading-on-V}.

\section{Parity-unstable modules arise as pairs of invariant subspaces of parity-stable modules}

The notion of parity stability features prominently in, for instance, \cite{DZ2005}--\cite{DZ2010}, \cite{DH}.  However, in this section, we show that all parity-unstable weak twisted modules appear as invariant subspaces of parity-stable weak twisted modules.  Thus it is enough to study the parity-stable weak twisted modules and then restrict to the invariant subspaces of such modules to study the parity-stable ones.   This theorem was motivated by constructions involving free fermions such as those given below in Sections \ref{parity-twisted-section} and \ref{permutation-dong-zhao-section}.

\begin{thm}\label{parity-stability-theorem}
Let $V$ be a vertex operator superalgebra and $g$ an automorphism.  Suppose $(M, Y_M)$ is a parity-unstable weak $g$-twisted $V$-module.  Then $(M, Y_M \circ \sigma_V)$ is a parity-unstable weak $g$-twisted $V$-module which is not isomorphic to $(M, Y_M)$.  Moreover $(M, Y_M)  \oplus (M, Y_M \circ \sigma_V)$ is a parity-stable weak $g$-twisted $V$-module.  In addition, if $(M, Y_M)$ is weak admissible or ordinary, then $(M, Y_M \circ \sigma_V)$ and hence $(M, Y_M)  \oplus (M, Y_M \circ \sigma_V)$ are weak admissible or ordinary.  In the case that  $(M, Y_M)$ is ordinary, then $(M, Y_M)$ and $(M, Y_M\circ \sigma_V)$ have the same graded dimension.  
\end{thm}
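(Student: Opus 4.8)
The plan is to verify directly from the definitions that twisting by the parity involution $\sigma_V$ produces another weak $g$-twisted module, that this new module is not isomorphic to the original precisely because the original is parity-unstable, and that the direct sum acquires a compatible $\mathbb{Z}_2$-grading. First I would check that $(M, Y_M \circ \sigma_V)$ satisfies the axioms of a weak $g$-twisted $V$-module. The lower-truncation and vacuum axioms are immediate since $\sigma_V(\mathbf{1}) = \mathbf{1}$. For the twisted Jacobi identity \eqref{twisted-jacobi}, one substitutes $\sigma_V(u)$, $\sigma_V(v)$ in place of $u,v$: on the left-hand side the two sign factors $(-1)^{|u|}(-1)^{|v|}$ multiply both terms, and $(-1)^{|u||v|}$ is unchanged since $|\sigma_V(u)| = |u|$; on the right-hand side one uses that $\sigma_V$ is an automorphism commuting with $g$, so $Y(g^j \sigma_V(u), x_0)\sigma_V(v) = \sigma_V(Y(g^j u, x_0)v)$, and the overall factor $(-1)^{|u|+|v|}$ matches what $Y_M \circ \sigma_V$ contributes on the output. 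Thus the identity holds, and $(M, Y_M \circ \sigma_V)$ is a weak $g$-twisted $V$-module.

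Next I would show $(M, Y_M)$ and $(M, Y_M \circ \sigma_V)$ are non-isomorphic. Suppose $f : M \to M$ were an isomorphism, so $f(Y_M(v,x)w) = Y_M(\sigma_V(v),x)f(w) = (-1)^{|v|} Y_M(v,x)f(w)$ for homogeneous $v$. Then $f$ intertwines $Y_M$ with $(-1)^{|v|} Y_M$; writing $M = \ker(f-1) \oplus \ker(f+1)$ is not quite right since $f$ need not be an involution, but one argues that the $\pm 1$-eigenspace decomposition of the operator $f$ (after normalizing, using Schur-type reasoning or directly that $f^2$ commutes with all $Y_M(v,x)$) gives a $\mathbb{Z}_2$-grading on $M$ with $v_m^g$ of sign $|v|$ reversing it — i.e. a parity-stabilizing grading, contradicting parity-instability of $(M, Y_M)$. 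Concretely: set $M^{(0)} = \{w : f(w) = w\}$, $M^{(1)} = \{w: f(w) = -w\}$ after rescaling $f$ so that $f^2 = 1$ (possible because $f^2$ is a module automorphism of the irreducible-or-not module; if $M$ is not irreducible one must be slightly more careful, but the relation $f(Y_M(v,x)w) = (-1)^{|v|}Y_M(v,x)f(w)$ forces $f^2$ to be $V$-linear and one can use it to produce the grading). The relation then reads $v_m^g M^{(j)} \subseteq M^{(j+|v|)}$, making $(M,Y_M)$ parity-stable, a contradiction. Hence no such $f$ exists.

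For the direct sum, I would define a $\mathbb{Z}_2$-grading on $N := (M, Y_M) \oplus (M, Y_M \circ \sigma_V)$ by the eigenspaces of the linear map $\tau$ that swaps the two summands; more precisely, on $M \oplus M$ put $N^{(0)} = \{(w, w) : w \in M\}$ and $N^{(1)} = \{(w, -w) : w \in M\}$, and check that the module action $Y_N(v,x)(w_1, w_2) = (Y_M(v,x)w_1, (-1)^{|v|} Y_M(v,x)w_2)$ sends $N^{(j)}$ to $N^{(j+|v|)}$ for homogeneous $v$ — which is an immediate computation from the sign bookkeeping — so $N$ is parity-stable. Finally, for the admissible and ordinary cases I would observe that $\sigma_V$ preserves weights (it is defined only via the sign grading, and $\mathrm{wt}\,\sigma_V(v) = \mathrm{wt}\,v$), so the grading $M = \coprod M(n)$ or $M = \coprod M_\lambda$ works verbatim for $Y_M \circ \sigma_V$ because $v_m^g$ and $(-1)^{|v|} v_m^g$ shift weights identically; hence $(M, Y_M \circ \sigma_V)$ is admissible resp. ordinary, and so is the direct sum. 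In the ordinary case $L^g(0) = \omega^g_1$ and $\sigma_V(\omega) = \omega$ (since $\omega \in V^{(0)}$), so the two modules have literally the same $L^g(0)$-action, hence the same graded dimension. The main obstacle I anticipate is the non-isomorphism step: handling the possibility that the hypothetical intertwiner $f$ is not an involution and that $M$ need not be irreducible, so that one cannot invoke Schur's lemma to normalize $f^2$ to a scalar; the resolution is to work with $f^2$ directly as a $V$-linear automorphism and extract the $\mathbb{Z}_2$-grading from the relation $f(Y_M(v,x)\cdot) = (-1)^{|v|}Y_M(v,x)f(\cdot)$ without assuming $f^2 = 1$.
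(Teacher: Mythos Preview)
Your overall structure matches the paper's: verify the module axioms for $Y_M \circ \sigma_V$, handle parity-instability/non-isomorphism, exhibit the diagonal/antidiagonal $\mathbb{Z}_2$-grading $(M\oplus M)^{(0)} = \{(w,w)\}$, $(M\oplus M)^{(1)} = \{(w,-w)\}$ on the direct sum, and note that $\sigma_V$ preserves weights for the admissible/ordinary and graded-dimension statements. On these points your argument and the paper's are essentially identical.

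The divergence is in how parity-instability of $(M, Y_M \circ \sigma_V)$ is obtained. You route through non-isomorphism: assume an intertwiner $f$ with $f\,v_m^g\,f^{-1} = (-1)^{|v|} v_m^g$, then try to extract a compatible $\mathbb{Z}_2$-grading from the eigenspaces of $f$. The paper instead observes directly that a decomposition $M = M^{(0)} \oplus M^{(1)}$ satisfies $v_m^g M^{(j)} \subseteq M^{(j+|v|)}$ if and only if it satisfies $(-1)^{|v|} v_m^g M^{(j)} \subseteq M^{(j+|v|)}$, since multiplying by the nonzero scalar $(-1)^{|v|}$ does not change subspace containment. Thus a compatible $\mathbb{Z}_2$-grading for $(M, Y_M)$ is literally the same data as one for $(M, Y_M \circ \sigma_V)$, and parity-instability of one is equivalent to parity-instability of the other. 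This bypasses your $f^2$ difficulty entirely and is the argument you should use for the parity-instability claim.

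Your concern about the non-isomorphism step is well-founded: without irreducibility you cannot invoke Schur to force $f^2$ to be scalar, and your proposed ``resolution'' of working with $f^2$ as a $V$-linear automorphism does not actually produce a $\mathbb{Z}_2$-grading on $M$ in any way you have specified. The paper's proof does not address the non-isomorphism assertion separately either; it establishes parity-instability by the direct argument above and treats the remaining claims as straightforward. So you have correctly located the one genuinely delicate point in the statement --- but for everything else, replace your intertwiner argument with the one-line observation that the two module structures share exactly the same set of compatible $\mathbb{Z}_2$-gradings.
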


\begin{proof}
Suppose $(M, Y_M)$ is a parity-unstable weak $g$-twisted $V$-module.  Then it follows immediately that $(M, Y_M \circ \sigma_V)$ is a weak $g$-twisted $V$-module.  If $(M, Y_M \circ \sigma_V)$  were parity stable, that would imply that there exists $\sigma_M$ as in (\ref{defining-sigma-M}) such that $(\sigma_M(M), Y_M \circ \sigma_V \circ \sigma_V) = (M, Y_M)$ is isomorphic to $(M, Y_M \circ \sigma_V)$, implying $(M, Y_M)$ is parity stable.   Thus $(M, Y_M \circ \sigma_V)$ is parity unstable.

Now consider $(M, Y_M)  \oplus (M, Y_M \circ \sigma_V)$, and let 
\begin{equation}
\sigma_{M \oplus M} : M \oplus M \longrightarrow M \oplus M, \quad \sigma_{M \oplus M}: (w_1, w_2) \mapsto (w_2, w_1)
\end{equation}
so that $M \oplus M$ has a $\mathbb{Z}_2$-grading with respect to $\sigma_{M \oplus M}$ given by 
\begin{equation}
(M \oplus M)^{(0)} = \{ (w, w) \; | \; w \in M \} \quad \mbox{and} \quad (M \oplus M)^{(1)} = \{ (w, -w) \; | \; w \in M \} .
\end{equation}
Then $(\sigma_{M \oplus M} (M \oplus M), (Y_M \oplus (Y_M \circ \sigma_V)) \circ \sigma_V)$ is obviously isomorphic to $(M \oplus M, Y_M \oplus (Y_M \circ \sigma_V))$.  

The rest of the theorem follows in a straightforward way from the definitions.
\end{proof}

\begin{rema}\label{parity-stability-remark}
{\em Requiring weak twisted modules to be parity stable as part of the definition gives the more canonical notion of weak twisted module from a categorical point of view, for instance to allow for the tensor product of modules for two vertex operator superalgebras be a module for the tensor product vertex operator superalgebra.   (See e.g. (\ref{define-tensor-product})).  In particular, the notion of a weak $V$-module corresponding to a {\it representation} of $V$ as a vertex superalgebra only holds for parity-stable weak $g$-twisted $V$-modules, in that the vertex operators acting on a weak $g$-twisted $V$-module have coefficients in $\mathrm{End} \, M $ such that, the operators $v_m^g$ have a $\mathbb{Z}_2$-graded structure compatible with that of $V$.   For instance the operators $v_0^g$, for $v \in V$, give a representation of the Lie superalgebra generated by $v_0$ in $\mathrm{End} \, V$ if and only if $M$ is parity stable.  This  corresponds to $V$ acting as endomorphisms in the category of vector spaces (i.e., via even or odd endomorphisms) rather than in the category of $\mathbb{Z}_2$-graded vectors spaces (i.e., as grade-preserving and thus strictly even endomorphisms).   However, it is interesting to note that, as is shown in Section \ref{(12)-twisted-section}, for a lift of a lattice isometry, the twisted modules for a lattice vertex operator superalgebra naturally sometimes give rise to parity-unstable modules.  Thus the notion of ``parity-unstable module" does naturally arise in certain constructions.}
\end{rema}

\section{Permutation-twisted free fermion vertex operator superalgebras and a conjecture} 

We first recall the notion of free fermion vertex operator superalgebras following the notation of \cite{B-n2twisted}, but also in the spirit of \cite{DZ}.  Then we recall the construction of parity-twisted modules and construct the permutation-twisted modules following \cite{DZ}.  Finally we make a conjecture based on this example on the nature of the construction of $(1 \; 2 \; \cdots k)$-twisted $V^{\otimes k}$-modules for $k$ even and $V$ any vertex operator superalgebra based on the example of free fermions.  

\subsection{Free Fermion vertex operator superalgebras}\label{fermionic-section}  

Let $\mathfrak{h}$ be  finite-dimensional vector space over $\mathbb{C}$ equipped with a nondegenerate symmetric bilinear form $\langle \cdot , \cdot \rangle$.   Let $d$ denote the dimension of $\mathfrak{h}$, let $t$ and $x$ denote formal commuting variables, and let $U(\cdot)$ denote the universal enveloping algebra for a Lie superalgebra $(\cdot)$.  

Form the affine Lie superalgebra
\[\hat{\mathfrak{h}}^f = \mathfrak{h} \otimes t^{1/2}\mathbb{C}[t,t^{-1}]\oplus \mathbb{C} \mathbf{k} ,\]
with $\mathbb{Z}_2$-grading given by $\mathrm{sgn}(\alpha \otimes t^n)= 1$ for $ n \in  \mathbb{Z}+ \frac{1}{2}$, and $\mathrm{sgn}(\mathbf{k})= 0$, and Lie super-bracket relations 
\begin{equation}\label{commutation-fermion}
\bigl[\mathbf{k}, \hat{\mathfrak{h}}^f \bigr] = 0, \quad \mbox{and} \quad
\bigl[\alpha \otimes t^m, \beta \otimes t^n \bigr] = \langle \alpha, \beta \rangle  \delta_{m + n,0} \mathbf{k} 
\end{equation}
for $\alpha, \beta \in \mathfrak{h}$ and $m, n \in \mathbb{Z} + \frac{1}{2}$.  Then $\hat{\mathfrak{h}}^f$ is a $(( \mathbb{Z} + \frac{1}{2}) \cup \{0\})$-graded Lie superalgebra 
\[\hat{ \mathfrak{h} }^f = \coprod_{n \in (\mathbb{Z} + \frac{1}{2})\cup \{0\} }\hat{ \mathfrak{h}}^f_n \]
where $\hat{\mathfrak{h}}^f_n = \mathfrak{h} \otimes t^{-n}$, for $n \in \mathbb{Z} + \frac{1}{2}$, and $\hat{\mathfrak{h}}^f_0  = \mathbb{C}\mathbf{k}$. 
It has graded subalgebras 
\[\hat{\mathfrak{h}}^f_+ = \mathfrak{h} \otimes t^{-1/2} \mathbb{C}[t^{-1}] \quad \mathrm{and} \quad \hat{\mathfrak{h}}^f_- = \mathfrak{h} \otimes t^{1/2} \mathbb{C}[t] .\]
Note that $\hat{\mathfrak{h}}^f = \hat{\mathfrak{h}}^f_- \oplus \hat{\mathfrak{h}}^f_+ \oplus \mathbb{C} \mathbf{k}$,
and note that $\hat{\mathfrak{h}}^f$ is a Heisenberg superalgebra. 

Let $\mathbb{C}$ be the $(\hat{\mathfrak{h}}^f_- \oplus \mathbb{C}\mathbf{k})$-module such that $\hat{\mathfrak{h}}^f_-$ acts trivially and $\mathbf{k}$ acts as $1$.  Let 
\[V_{fer}^{\otimes d} = U(\hat{\mathfrak{h}}^f) \otimes_{U(\hat{\mathfrak{h}}^f_- \oplus \mathbb{C}\mathbf{k})} \mathbb{C} \simeq \mbox{$\bigwedge$}(\hat{\mathfrak{h}}^f_+)  ,\]
so that $V_{fer}^{\otimes d}$ is naturally isomorphic as a $\hat{\mathfrak{h}}^f$-module to the algebra of polynomials in the anticommuting elements of $\hat{\mathfrak{h}}^f_+$.

Let $\alpha \in \mathfrak{h}$ and $n \in \mathbb{Z} + \frac{1}{2}$.  We will use the notation
\[\alpha(n) = \alpha \otimes t^n.\]

Then $V_{fer}^{\otimes d}$ is a $\hat{\mathfrak{h}}^f$-module with action induced from the supercommutation relations (\ref{commutation-fermion}) given by
\begin{eqnarray}
\mathbf{k} \beta (-m) \mathbf{1}&=& \beta (-m) \mathbf{1} \label{fermion-action1} \\
\alpha (n) \beta (-m)\mathbf{1} &=& \langle \alpha, \beta \rangle  \delta_{m,n} \mathbf{1} \\
\alpha (-n) \beta (-m)\mathbf{1} &=& - \beta(-m) \alpha(-n)\mathbf{1} \label{fermion-action-last}
\end{eqnarray}
for $\alpha, \beta \in \mathfrak{h}$ and $m, n \in  \mathbb{N} + \frac{1}{2}$. That is letting 
$\{\alpha^{(1)}, \alpha^{(2)}, \dots, \alpha^{(d)} \}$ be an orthonormal basis for $\h$, we have
\begin{equation}
V^{\otimes d}_{fer} = \mbox{$\bigwedge$} \left[\alpha^{(j)}(-n) \; \Big| \; j = 1,\dots d, \; n \in \mbox{$\mathbb{N} + \frac{1}{2}$}\right]
\end{equation}
where $\mathbf{k}$ acts as $1$,  and for $j = 1, \dots, d$ and $n \in \mathbb{N} + \frac{1}{2}$, the operator $\alpha^{(j)} (n)$ acts as the partial derivative with respect to  $a^{(j)}(-n)$, and the operator $\alpha^{(j)} (-n)$ acts as multiplication.  

For $\alpha \in \mathfrak{h}$, set
\begin{equation} \alpha(x) = \sum_{n \in \frac{1}{2} + \mathbb{Z}} \alpha(n) x^{-n-\frac{1}{2}} ,
\end{equation}
Define the {\it normal ordering} operator ${}^\circ_\circ \cdot {}^\circ_\circ$ on products of the operators $\alpha(n)$ by
\begin{equation}
{}^\circ_\circ \alpha(m) \beta(n) {}^\circ_\circ = \left\{ \begin{array}{ll} \alpha (m) \beta (n) & \mbox{if $m\leq n$}\\
- \beta (n) \alpha (m) & \mbox{if $m> n$}\\
\end{array} \right. 
\end{equation}
for $m,n \in  \mathbb{Z} + \frac{1}{2}$. 

For $v = \alpha_1(-n_1) \alpha_2 (-n_2) \cdots \alpha_m (-n_m) \mathbf{1} \in V_{fer}$, for $\alpha_j \in \mathfrak{h}$, $n_j \in \mathbb{N}+ \frac{1}{2}$, and $j = 1, \dots, m$ and $m \in \mathbb{N}$, define the vertex operator corresponding to $v$ to be 
\begin{equation}
Y(v, x) = {}^\circ_\circ \left( \partial_{n_1-\frac{1}{2}} \alpha_1(x) \right) \left( \partial_{n_2-\frac{1}{2}} \alpha_2(x) \right) \cdots \left( \partial_{n_m-\frac{1}{2}} \alpha_m(x)\right) {}^\circ_\circ ,
\end{equation}
where for $n \in \mathbb{N}$, we use the notation $\partial_n = \frac{1}{n!} \left(\frac{d}{dx}\right)^n$.

Note that
\begin{equation}
[\alpha^{(j)} (x_1), \alpha^{(k)}(x_2)] = \delta_{j,k} \left( \frac{1}{(x_1 - x_2)} - \frac{1}{(-x_2 + x_1)} \right)
\end{equation}
implying that the $\alpha^{(j)}(x) = Y(\alpha^{(j)} (-1/2)\mathbf{1}, x)$, for $j = 1,\dots, d$, are mutually local.  And, in fact, setting 
\begin{equation}\label{fermionic-Virasoro}
 \omega = \frac{1}{2} \sum_{j = 1}^d \alpha^{(j)}(-3/2) \alpha^{(j)}(-1/2) \mathbf{1},
\end{equation}
we have that $(V^{\otimes d}_{fer}, Y, \mathbf{1}, \omega)$ is a vertex operator superalgebra with  central charge $d/2$.  $V_{fer}^{\otimes d}$ is called the {\it $d$ free fermion vertex operator superalgebra}.

When $d$ is even, $V_{fer}^{\otimes d}$ is precisely the vertex operator superalgebra studied in \cite{FFR} denoted $CM(\mathbb{Z} + \frac{1}{2})$, although in \cite{FFR} a polarized basis for $\mathfrak{h}$ is used as we will do also below, as in Equation (\ref{polarize}).

The graded dimension of $V_{fer}^{\otimes d}$ using the $\frac{1}{2}\mathbb{Z}$-grading of $V_{fer}^{\otimes d}$ by eigenvalues of $L(0)$ is
\begin{equation}
\mathrm{dim}_q V_{fer}^{\otimes d} = q^{-c/24} \sum_{n \in \frac{1}{2}\mathbb{Z}} \mathrm{dim} (V_{fer}^{\otimes d})_n q^n = q^{-d/48} \prod_{n \in \mathbb{Z}_+} (1+q^{n-1/2})^{d} = \mathfrak{f}(q)^d,
\end{equation}
where $\mathfrak{f}(q)$ is a classical Weber function \cite{YZ}.   A simple calculation shows that in fact $\mathfrak{f}(q) = \frac{\eta(q)^2}{\eta(q^2) \eta(q^{1/2})}$, where $\eta(q) = q^{1/24} \prod_{n \in \mathbb{Z}_+} (1 - q^n)$ is the Dedekind $\eta$-function.   

In addition, the {\it superdimension} of a vertex operator superalgebra $V = V^0 \oplus V^1$ is sometimes of interest.  It is defined to be $\mathrm{sdim}_q V = \mathrm{dim}_q V^{(0)} - \mathrm{dim}_q V^{(1)}$.
Thus the superdimension of $V_{fer}$ is
\begin{equation}
\mathrm{sdim}_q V_{fer} = q^{-d/48} \prod_{n \in \mathbb{Z}_+} (1-q^{n-1/2})^{d}  = \mathfrak{f}_1(q)^d
\end{equation}
where $\mathfrak{f}_1(q)$ is also a classical Weber function.  Observe that $\mathfrak{f}_1(q) = \frac{\eta(q^{1/2})}{\eta(q)}$.  

\begin{rema}\label{Weber-remark}{\em
In addition to the two classical Weber functions, $\mathfrak{f}$ and $\mathfrak{f}_1$, there is a third classical Weber function, denoted $\mathfrak{f}_2$ and given by 
\begin{equation}
\mathfrak{f}_2(q) = \sqrt{2} q^{1/24} \prod_{n \in \mathbb{Z}_+} (1+ q^n) = \sqrt{2} \frac{\eta(q^2)}{\eta(q)}.
\end{equation}
This third classical Weber function, $\mathfrak{f}_2$, will appear in Section \ref{parity-twisted-section}.  These three Weber functions, $\mathfrak{f}, \mathfrak{f}_1$, and $\mathfrak{f}_2$, form a set that is $SL_2(\mathbb{Z})$-invariant up to permutation and multiplication by $48$th roots of unity \cite{YZ}.
}
\end{rema}

Finally, we note that $V_{fer}$, and thus $V_{fer}^{\otimes d}$, is not only rational, but is self-dual as a vertex operator superalgebra (cf. \cite{FFR}, \cite{KW}, \cite{Li-untwisted}), i.e., the only irreducible $V_{fer}$-module is $V_{fer}$ itself.  
 
\subsection{Parity-twisted free fermions}\label{parity-twisted-section}

Form the affine Lie superalgebra
\[\hat{\mathfrak{h}}^f[\sigma] = \mathfrak{h} \otimes \mathbb{C}[t,t^{-1}]\oplus \mathbb{C} \mathbf{k} ,\]
with $\mathbb{Z}_2$-grading given by $\mathrm{sgn}(\alpha \otimes t^n)= 1$ for $n \in \mathbb{Z}$, and $\mathrm{sgn}(\mathbf{k})= 0$, and Lie super-bracket relations 
\begin{equation}\label{commutation-fermion-parity-twisted}
\bigl[\mathbf{k}, \hat{\mathfrak{h}}^f[\sigma] \bigr] = 0, \quad \mbox{and} \quad 
\bigl[\alpha \otimes t^m, \beta \otimes t^n \bigr] =  \langle \alpha, \beta \rangle  \delta_{m + n,0} \mathbf{k} 
\end{equation}
for $\alpha, \beta \in \mathfrak{h}$ and $m, n \in \mathbb{Z}$.  
Then $\hat{\mathfrak{h}}^f[\sigma]$ is a $\mathbb{Z}$-graded Lie superalgebra 
\[\hat{ \mathfrak{h} }^f[\sigma] = \coprod_{n \in\mathbb{Z}}\hat{ \mathfrak{h}}^f[\sigma]_n \]
where $\hat{\mathfrak{h}}^f[\sigma]_0 = \mathfrak{h} \oplus \mathbb{C} \mathbf{k}$, and  $\hat{\mathfrak{h}}^f[\sigma]_n = \mathfrak{h} \otimes t^{-n}$ for $n \neq 0$.  And $\hat{\mathfrak{h}}^f[\sigma]$ is a Heisenberg superalgebra.

If $\mathrm{dim} \, \mathfrak{h} = d$ is even, i.e. $d = 2l$, then we can choose a polarization of $\mathfrak{h}$ into maximal isotropic subspaces $\mathfrak{a}^\pm$.  That is $\mathfrak{a}^\pm$ both have dimension $l$, and satisfy $\langle \mathfrak{a}^+, \mathfrak{a}^+ \rangle = \langle \mathfrak{a}^-, \mathfrak{a}^-\rangle = 0$, and we can choose a basis of $\mathfrak{a}^-$, given by $\{\beta^{(1)}_-, \beta^{(2)}_-, \dots, \beta^{(l)}_-\}$, and a dual basis for $\mathfrak{a}^+$, given by $\{\beta^{(1)}_+, \beta^{(2)}_+, \dots, \beta^{(l)}_+\}$ such that $\langle \beta^{(j)}_-, \beta^{(n)}_+ \rangle = \delta_{j,n}$.

 If $\mathrm{dim} \, \mathfrak{h} = d$ is odd, i.e. $d = 2l+1$, then we can choose a polarization of $\mathfrak{h}$ into maximal isotropic subspaces $\mathfrak{a}^\pm$, each of  dimension $l$, and a one-dimensional space $\mathfrak{e}$, so that $\mathfrak{h} = \mathfrak{a}^- \oplus \mathfrak{a}^+ \oplus \mathfrak{e}$, and such that $\langle \mathfrak{a}^\pm, \mathfrak{e}\rangle = 0$, and $\mathfrak{e} = \mathbb{C} \epsilon$ with $\langle \epsilon , \epsilon \rangle = 2$. 

\begin{rema}\label{polarization-remark} \em{If $\{ \alpha^{(1)}, \alpha^{(2)}, \dots, \alpha^{(d)} \}$ is an orthonormal basis for $\mathfrak{h}$ with respect to the symmetric bilinear form, then a polarization for $\mathfrak{h}$ can be given as follows: For
$d$ either $2l$ or $2l +1$, set 
\begin{equation}\label{polarize}
\beta^{(j)}_\pm = \frac{1}{\sqrt{2}} \left( \alpha^{(j)} \pm i \alpha^{(j + l)} \right)
\end{equation} 
for $j =1,2,\dots l$.  Then $\mathfrak{a}^\pm = \mathrm{span}_\mathbb{C} \{  \beta^{(1)}_\pm, \beta^{(2)}_\pm, \dots, \beta_\pm^{(l)} \}$ gives a decomposition into maximal polarized spaces.  If $d = 2l+1$, then set $\epsilon = \sqrt{2} \alpha^{(d)}$.   Note that (\ref{polarize}) is equivalent to $\alpha^{(j)} = \frac{1}{\sqrt{2}} \left( \beta_+^{(j)} + \beta_-^{(j)}\right)$ and  $\alpha^{(j+l)} = \frac{-i}{\sqrt{2}} \left( \beta_+^{(j)} - \beta_-^{(j)}\right)$ for $j = 1,\dots,l$.
}
\end{rema}

Then $\hat{\mathfrak{h}}^f[\sigma]$ has the following graded subalgebras
\[\hat{\mathfrak{h}}^f [\sigma]_+ = \mathfrak{h} \otimes t^{-1} \mathbb{C}[t^{-1}] \qquad \mbox{and} \qquad \hat{\mathfrak{h}}^f [\sigma]_- = \mathfrak{h} \otimes t \mathbb{C}[t], \]
and we have $\hat{\mathfrak{h}}^f[\sigma] = \hat{\mathfrak{h}}^f[\sigma]_- \oplus \mathfrak{h} \oplus \hat{\mathfrak{h}}^f[\sigma]_+ \oplus \mathbb{C}\mathbf{k}$.  In addition, $\hat{\mathfrak{h}}^f[\sigma]$ has the subalgebras
\[\hat{\mathfrak{h}}^f[\sigma]_+ \oplus \mathfrak{a}^+ \qquad \mbox{and} \qquad  \hat{\mathfrak{h}}^f[\sigma]_- \oplus \mathfrak{a}^-\]
for $d$ even and
\[\hat{\mathfrak{h}}^f[\sigma]_+ \oplus \mathfrak{a}^+ \oplus \mathfrak{e} \qquad \mbox{and} \qquad  \hat{\mathfrak{h}}^f[\sigma]_- \oplus \mathfrak{a}^-\]
for $d$ odd.

Let $\mathbb{C}$ be the $(\hat{\mathfrak{h}}^f [\sigma]_- \oplus \mathfrak{a}^- \oplus  \mathbb{C} \mathbf{k})$-module such that $\hat{\mathfrak{h}}^f[\sigma]_-\oplus \mathfrak{a}^-$ acts trivially and $\mathbf{k}$ acts as $1$.  Set
\begin{equation}
M_\sigma =  U(\hat{\mathfrak{h}}^f[\sigma]) \otimes_{U(\hat{\mathfrak{h}}^f [\sigma]_- \oplus  \mathfrak{a}^- \oplus \mathbb{C} \mathbf{k})} \mathbb{C}.
\end{equation}
Then as a vector space, we have
\begin{equation}
M_\sigma \stackrel{\mathrm{vec.sp.}}{\simeq}\left\{ \begin{array}{ll}
\mbox{$\bigwedge$}(\hat{\mathfrak{h}}^f [\sigma]_+ \oplus \mathfrak{a}^+) & \mbox{if $d$ is even}\\
\mbox{$\bigwedge$}(\hat{\mathfrak{h}}^f [\sigma]_+ \oplus \mathfrak{a}^+ \oplus  \mathfrak{e}) & \mbox{if $d$ is odd}
\end{array} \right.,
\end{equation}
where if $d$ is even, this is also an associative algebra isomorphism, but if $d$ is odd it is not;  rather, if $d$ is odd, $M_\sigma$ is a Clifford algebra but not an exterior algebra.

Let $\alpha \in \mathfrak{h}$ and $n \in \mathbb{Z}$.  We use the notation 
\[ \overline{\alpha(n)} = \alpha \otimes t^n \in \hat{\mathfrak{h}}^f[\sigma] \]
where the overline is meant to distinguish elements of $\hat{\mathfrak{h}}^f[\sigma]$ from elements of $\hat{\mathfrak{h}}$, used to construct the free bosonic theory.

Then $M_\sigma$ is a $\hat{\mathfrak{h}}^f[\sigma]$-module.  For $d$ even, the action induced from the supercommutation relations (\ref{commutation-fermion-parity-twisted}) is given by 
\begin{eqnarray}
\mathbf{k} \overline{\beta (-m)} \mathbf{1} &=& \overline{\beta (-m)} \mathbf{1} \label{twisted-relations1}\\
\overline{\alpha  (n)} \, \overline{\beta (-m) } \mathbf{1} &=& \langle \alpha, \beta \rangle  \delta_{m,n} \mathbf{1} \\
\overline{\alpha (-n)} \, \overline{\beta (-m)} \mathbf{1} &=& - \overline{\beta(-m)} \, \overline{\alpha(-n)}  \mathbf{1}
\end{eqnarray}
for either (i) $\alpha, \beta \in \mathfrak{h}$ and $m, n \in \mathbb{Z}_+$; (ii) $\alpha \in \mathfrak{h}$, $\beta \in \mathfrak{a}^+$, $m =0$, and $n \in \mathbb{Z}_+$;  or (iii) $\alpha \in \mathfrak{a}^-$, $\beta \in \mathfrak{h}$, $n = 0$, and $m \in \mathbb{Z}_+$; and
\begin{equation}\label{twisted-relations-last}
\overline{\alpha(0)} \, \overline{\beta(0)} \mathbf{1} \ = \ \langle \alpha, \beta \rangle \mathbf{1}
\end{equation}
if $\alpha \in \mathfrak{a}^-$ and $\beta \in \mathfrak{a}^+$, and where here $\mathbf{1} = \mathbf{1}_{M_\sigma} = 1$.  

For $d$ odd, the action induced from the supercommutation relations are given by (\ref{twisted-relations1})--(\ref{twisted-relations-last}) as well as  
\begin{eqnarray}
\mathbf{k} \overline{\epsilon(0)} \mathbf{1} = \overline{\epsilon(0)}\mathbf{1}, \quad & &  \quad  
\overline{\alpha(0)} \, \overline{\epsilon(0)} \mathbf{1} = 0,\\
\overline{\beta(0)} \,  \overline{\epsilon(0)} \mathbf{1}  = -  \overline{\epsilon(0)}\,  \overline{\beta(0)}\mathbf{1}, & & 
\overline{\epsilon(0)} \, \overline{\epsilon (0)} = \frac{1}{2} \langle \epsilon, \epsilon \rangle \mathbf{1},
\end{eqnarray}
for $\alpha \in \mathfrak{a}^-$, $\beta \in \mathfrak{a}^+$, and $\epsilon \in \mathfrak{e}$. 

In particular, letting $\{\beta^{(1)}_\pm, \beta^{(2)}_\pm, \dots, \beta^{(l)}_\pm\}$ be the bases for the polarized spaces $\mathfrak{a}^\pm$ as defined in Remark \ref{polarization-remark}, and if $d$ is odd, letting $\mathfrak{e} = \mathbb{C} \epsilon$ with $\langle \epsilon,\epsilon \rangle = 2$, then we have 
\begin{equation*}
M_\sigma = \mbox{$\bigwedge$} \Big[\overline{\beta^{(j)}_-(-m)}\mathbf{1}, \, \overline{\beta^{(j)}_+(-n)} \mathbf{1} \, \Big| \, \mbox{ $m \in \mathbb{Z}_+$, $n \in \mathbb{N}$, and $j = 1, \dots, l$} \Big] 
\end{equation*}
for $d$ even, and in this case, the identification is as an associative algebra.  For $d$ odd, we have
\begin{equation*}
M_\sigma = \mbox{$\bigwedge$} \Big[\overline{\beta^{(j)}_-(-m)}\mathbf{1}, \, \overline{\beta^{(j)}_+(-n)}\mathbf{1}, \, \overline{\epsilon(-n)}\mathbf{1} \, \Big| \, \mbox{$m \in \mathbb{Z}_+$, $n \in \mathbb{N}$, and $j = 1, \dots, l$} \Big] 
\end{equation*}
where in this case, the identification is as a vector space but not as an associative algebra.  As an associative algebra with identity, $M_\sigma$ for $d$ odd is the Clifford algebra generated by $\hat{\mathfrak{h}}^f [\sigma]_+ \oplus \mathfrak{a}^+ \oplus  \mathfrak{e}$ with the corresponding symmetric bilinear form.  

That is, for $d$ even, $k$ acts as $1$, and for $j = 1, \dots, l$, and $n \in \mathbb{Z}_+$, the operator $\overline{\beta^{(j)}_\pm (n) }$  acts as the partial derivative with respect to $\overline{\beta^{(j)}_\mp (-n) }$, the operator $\overline{\beta^{(j)}_\pm (-n) }$ acts as multiplication by $\overline{\beta^{(j)}_\pm (-n) }$, the operator $\overline{\beta^{(j)}_- (0) }$ acts as the partial derivative with respect to $\overline{\beta^{(j)}_+ (0) }$, and the operator
$\overline{\beta^{(j)}_+ (0) }$ acts via multiplication.  If $d$ is odd, then we have the operators as above in addition to the operators $\overline{\epsilon (n)}$ for $n \in \Z_+$, which act as two times the partial derivative with respect to $\overline{\epsilon (-n)}$, and the operators $
\overline{\epsilon (-n)}$ for $n \in \mathbb{N}$, which act as multiplication  with the condition that $\overline{\epsilon(0)} \overline{\epsilon (0)} = 1$.

For $\alpha \in \mathfrak{h}$, set
\begin{equation} \alpha(x)^\sigma = \sum_{n \in \mathbb{Z}} \overline{\alpha (n) } x^{-n-\frac{1}{2}} .
\end{equation}
Then for the orthonormal basis of $\mathfrak{h}$, $\alpha^{(j)}$, for $j = 1, \dots, d$, we have 
\begin{equation}
[\alpha^{(j)} (x_1)^\sigma, \alpha^{(k)}(x_2)^\sigma ] = \delta_{j,k} \, x_1^{1/2}x_2^{-1/2}\left( \frac{1}{(x_1 - x_2)} - \frac{1}{(-x_2 + x_1)} \right)\\
\end{equation}
for $j,k = 1,\dots,d$, implying that the $\alpha^{(j)}(x)^\sigma$, for $j = 1, \dots, d$, are mutually local. 

For $v \in V_{fer}^{\otimes d}$, define $Y^\sigma(v,x) : M_\sigma \longrightarrow M_\sigma [[x^{1/2}, x^{-1/2}]]$ as follows: For $\alpha \in \mathfrak{h}$, $n \in \mathbb{N} + 1/2$, and $u \in V_{fer}^{\otimes d}$, let
\begin{multline}\label{define-sigma-twisted}
Y^\sigma (\alpha(-n) u,x) = Y^\sigma(\alpha_{-n -1/2} u,x) = \mathrm{Res}_{x_1} \mathrm{Res}_{x_0} \left( \frac{x_1 - x_0}{x} \right)^{1/2} x_0^{-n-1/2}\\
\cdot
\left(  x^{-1}_0\delta\left(\frac{x_1-x}{x_0}\right)
\alpha (x_1)^\sigma Y^\sigma(u,x)
-  (-1)^{|u|} x^{-1}_0\delta\left(\frac{x-x_1}{-x_0}\right) Y^\sigma
(u,x) \alpha (x_1)^\sigma  \right) .
\end{multline}
Then since $V_{fer}^{\otimes d} = \langle \alpha^{(j)}(-1/2) \mathbf{1} \; | \; j = 1,\dots, d \rangle$, equation (\ref{define-sigma-twisted}) defines $Y^\sigma(v,x)$ iteratively for any $v \in V_{fer}^{\otimes d}$. 

Recalling that the Virasoro element, $\omega_{fer}$, for the free fermionic vertex operator superalgebra $V_{fer}^{\otimes d}$ is given by (\ref{fermionic-Virasoro}), we have 
\begin{equation}
Y^\sigma(\omega_{fer}, x) = \frac{1}{2} \sum_{j = 1}^d Y^\sigma( \alpha^{(j)} (-1/2)_{-2} \alpha^{(j)}(-1/2)\mathbf{1},x)  = \sum_{n \in \mathbb{Z}}  L^\sigma(n) x^{-n-2}, 
\end{equation}
and thus
\begin{equation}\label{L-sigma-twisted}
L^\sigma(m) = \sum_{j = 1}^d \sum_{{n \in \mathbb{Z}}\atop{n > -\frac{m}{2}}} \left(n + \frac{m}{2}\right) \overline{\alpha^{(j)} (-n)} \, \overline{\alpha^{(j)} (n +m) }  + \frac{d}{16} \delta_{m,0}.
\end{equation}
From this it follows that $\left[L^\sigma(-1), Y^\sigma( \alpha^{(j)} (-1/2)\mathbf{1},x )\right] = \frac{d}{dx} Y^\sigma (\alpha^{(j)}(-1/2)\mathbf{1}, x)$.
Thus from \cite{Li-twisted}, we have that $M_\sigma$ is a weak $\sigma$-twisted module for $V_{fer}^{\otimes d}$.  It is also admissible.  In \cite{FFR}, if $d$ is even, $M_\sigma$ is denoted by $CM(\mathbb{Z})$.  

By \cite{Li-twisted} as well as \cite{DZ}, in the case that $d = \mathrm{dim} \, \mathfrak{h}$ is even, $M_\sigma$ is irreducible and is the only irreducible admissible $\sigma$-twisted module for $V_{fer}^{\otimes d}$, up to isomorphism.   It is parity stable and is also an ordinary $\sigma$-twisted $V_{fer}^{\otimes d}$-module, as we will see below when we discuss the $L^\sigma(0)$-grading and the $\mathbb{Z}_2$-grading.

In the case that $d$ is odd, $M_\sigma$ is irreducible as a parity-stable module but reduces to the direct sum of two irreducible parity-unstable subspaces, and these two are the only irreducible admissible parity-unstable $\sigma$-twisted modules for $V_{fer}^{\otimes d}$, up to isomorphism.   In this case, setting
\[ W = \mbox{$\bigwedge$} \left[\overline{\beta^{(j)}_-(-m)} \mathbf{1}, \, \overline{\beta^{(j)}_+(-n)}\mathbf{1}, \,  \overline{\epsilon (-m) }\mathbf{1} \, \Big| \, \mbox{$m \in \mathbb{Z}_+$, $n \in \mathbb{N}$, and $j = 1, \dots, l$} \right] \]
and letting $W = W^0 \oplus W^1$ be the decomposition of $W$ into even and odd subspaces, 
these two irreducibles are given by
\begin{eqnarray}
M_\sigma^\pm = \left(1 \pm  \, \overline{\epsilon(0)}1\right) W^0 \oplus \left(1 \mp \, \overline{\epsilon(0)}1 \right) W^1,
\end{eqnarray}
and we have $M_\sigma = M_\sigma^- \oplus M_\sigma^+$.  That $M_\sigma^\pm$ are in fact ordinary $\sigma$-twisted modules for $V_{fer}^{\otimes d}$ and parity-unstable,  we shall see now by discussing the $L^\sigma(0)$-grading and the $\mathbb{Z}_2$-grading.

In terms of the polarization of $\mathfrak{h}$ with respect to the basis $\alpha^{(j)}$, we have from equation (\ref{L-sigma-twisted})
\begin{equation} 
L^{\sigma}(0)  =  \sum_{j = 1}^l  \sum_{m  \in \mathbb{Z}_+} \left( m  \overline{\beta^{(j)}_+( -m) } \, \overline{\beta_-^{(j)} (m) } + m  \overline{\beta^{(j)}_-( -m) } \, \overline{\beta_+^{(j)} (m) } \right)    + L' + \frac{d}{16}
\end{equation}
where if $d$ is even, $L' = 0$, and if $d$ is odd, $L' = \frac{1}{2}\sum_{m\in \mathbb{Z}_+} m \overline{\epsilon (-m)} \, \overline{\epsilon(m) }$.
Thus for $j = 1,\dots, l$, and $m \in \mathbb{Z}_+$, the $L^\sigma(0)$ grading is given by 
\begin{equation}
\mathrm{wt} \, \mathbf{1} =  \mathrm{wt} \, \overline{\beta^{(j)}_+ (0) }\mathbf{1} = \frac{d}{16},  \quad \mathrm{and} \quad  \mathrm{wt} \, \overline{\beta^{(j)}_\pm (-m)} \mathbf{1} = m + \frac{d}{16}, 
\end{equation}
for $d = 2l$, and if $d$ is odd, we also have 
\begin{equation}
 \mathrm{wt} \,  \overline{\epsilon(0) } \mathbf{1} =  \frac{d}{16}, \quad  \mathrm{and} \quad  \mathrm{wt} \, \overline{\epsilon (-m) } \mathbf{1} =  m +\frac{d}{16}.
\end{equation}

Therefore, for $d$ even, the graded dimension of $M_\sigma$ is
\begin{equation}
\mathrm{dim}_q M_\sigma =  q^{-c/24} \sum_{\lambda \in \mathbb{C}} (M_\sigma)_\lambda q^\lambda  \ = \ q^{-d/48} q^{d/16} 2^{d/2} \prod_{n\in \mathbb{Z}_+} (1 + q^n)^d =  \mathfrak{f}_2 (q)^d 
\end{equation} 
where $\mathfrak{f}_2$ is a classical Weber function as discussed in Remark \ref{Weber-remark}.  
For $d$ odd, the graded dimension of $M_\sigma$ is 
\begin{equation}
\mathrm{dim}_q M_\sigma \ =\  q^{-d/48} q^{d/16} 2^{(d+1)/2} \prod_{n\in \mathbb{Z}_+} (1 + q^n)^d \ = \ \sqrt{2} \mathfrak{f}_2 (q)^d ,
\end{equation} 
and the grading of each of the two submodules $M^\pm_\sigma$ is exactly half that of the graded dimension of $M_\sigma$.  

\begin{lem}  If $d$ is even, then the unique up to equivalence irreducible parity-twisted module for $d$ free fermions $M_\sigma$ is a parity-stable  twisted module.   If $d$ is odd, then the two unique up to equivalence  irreducible parity-twisted modules $M_\sigma^\pm$ for $d$ free fermions are parity-unstable invariant subspace of $M_\sigma$.  In addition, $M_\sigma = M_\sigma^+ \oplus M_\sigma^-$ is a parity-stable twisted module and is irreducible as a parity-stable twisted module. 
\end{lem}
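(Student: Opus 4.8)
The plan is to treat the cases $d$ even and $d$ odd separately, in each case producing an explicit $\mathbb{Z}_2$-grading that exhibits the relevant module(s) as parity stable, and using a Clifford-algebra argument at the bottom of the module to pin down parity-instability of the pieces when $d$ is odd. First, for $d$ even, I would put on $M_\sigma=\bigwedge(\hat{\mathfrak{h}}^f[\sigma]_+\oplus\mathfrak{a}^+)$ the $\mathbb{Z}_2$-grading by exterior degree mod $2$, with $\mathbf{1}$ declared even. Each operator $\overline{\alpha^{(j)}(n)}$, $n\in\mathbb{Z}$, acts either as (a scalar multiple of) multiplication by a degree-one element or as a degree-lowering derivation, hence is an odd operator for this grading; so every coefficient of every current $\alpha^{(j)}(x)^\sigma$ is odd. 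Since $V_{fer}^{\otimes d}$ is generated by the odd vectors $\alpha^{(j)}(-1/2)\mathbf{1}$ and $Y^\sigma$ is built iteratively from the currents through (\ref{define-sigma-twisted}), where $Y^\sigma(\alpha(-n)u,x)$ is assembled from $\alpha(x_1)^\sigma$ and $Y^\sigma(u,x)$ by residues and $\delta$-function coefficients that do not affect the parity on $M_\sigma$, an induction on the number of generating factors of $v$ shows that $Y^\sigma(v,x)$ is a homogeneous operator of parity $|v|$. This yields $v_m^\sigma M_\sigma^{(j)}\subseteq M_\sigma^{(j+|v|)}$, i.e. parity stability; the isomorphism with $(\sigma_{M_\sigma}(M_\sigma),Y^\sigma\circ\sigma_V)$ is then formal from (\ref{defining-sigma-M}).

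For $d$ odd the identical grading argument applies, now with $\overline{\epsilon(n)}$ odd for every $n$ (including $\overline{\epsilon(0)}$, which interchanges $W$ with $\overline{\epsilon(0)}W$ and reverses exterior degree mod $2$), so $M_\sigma$ is again parity stable. Next I would exhibit $M_\sigma^\pm$ as $\sigma$-twisted submodules by introducing the operator $\Gamma$ on $M_\sigma$ equal to $\overline{\epsilon(0)}$ followed by the involution acting as $(-1)^p$ on the exterior-degree-$p$ part of $W$ and commuting with $\overline{\epsilon(0)}$. One checks $\Gamma^2=1$ and that $\Gamma$ commutes with every mode $\overline{\alpha^{(j)}(n)}$: each such mode either anticommutes with both factors of $\Gamma$ (all $\overline{\beta^{(j)}_\pm(n)}$, and all $\overline{\epsilon(n)}$ with $n\neq 0$) or commutes with both (the single mode $\overline{\epsilon(0)}$). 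By the same iterative construction of $Y^\sigma$, $\Gamma$ then commutes with $Y^\sigma(v,x)$ for all $v$, so its $\pm 1$-eigenspaces are $\sigma$-twisted submodules; a direct computation identifies them with $M_\sigma^\pm$ as defined above, giving $M_\sigma=M_\sigma^+\oplus M_\sigma^-$ as $\sigma$-twisted modules.

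The hard part will be showing $M_\sigma^\pm$ admit no compatible $\mathbb{Z}_2$-grading. I would argue through the lowest $L^\sigma(0)$-weight space $C^+\subseteq M_\sigma^+$ (weight $d/16$), which for $d=2l+1$ has dimension $2^l$ and carries the action of the $2l+1$ zero modes $\overline{\beta^{(j)}_\pm(0)}$, $\overline{\epsilon(0)}$, generating a rank-$(2l+1)$ Clifford algebra. Since $M_\sigma^+$ is irreducible, $C^+$ is irreducible for this Clifford algebra, and over $\mathbb{C}$ it is already faithful of minimal dimension for — hence irreducible over — the even Clifford subalgebra, which is isomorphic to the rank-$2l$ Clifford algebra and acts as all of $\mathrm{End}(C^+)$. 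If $M_\sigma^+$ were parity stable, restricting a compatible grading to $C^+$ would give an involution $\tau$ of $C^+$ anticommuting with all $2l+1$ zero modes; conjugation by $\tau$ would then be an algebra automorphism of $\mathrm{End}(C^+)$ negating every zero mode, hence fixing the even subalgebra, hence the identity, forcing every zero mode to vanish — absurd, since for instance $\overline{\epsilon(0)}^2=1$. Thus $M_\sigma^+$, and likewise $M_\sigma^-$, is parity unstable; this Clifford-module input, not a formal manipulation, is where the oddness of $d$ enters decisively.

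Finally I would conclude: $M_\sigma=M_\sigma^+\oplus M_\sigma^-$ is parity stable by the exterior-degree grading exhibited above. By the classification of parity-twisted modules for free fermions recalled above (alternatively by Theorem \ref{parity-stability-theorem} once parity-instability is known, together with the identification $(M_\sigma^+,Y^\sigma\circ\sigma_V)\cong(M_\sigma^-,Y^\sigma)$), $M_\sigma^+$ and $M_\sigma^-$ are irreducible and non-isomorphic $\sigma$-twisted modules, so the only $\sigma$-twisted submodules of $M_\sigma$ are $0$, $M_\sigma^+$, $M_\sigma^-$, $M_\sigma$. Of these, only $0$ and $M_\sigma$ are homogeneous for the exterior-degree-mod-$2$ grading — the parity map $\sigma_{M_\sigma}$ interchanges $M_\sigma^+$ and $M_\sigma^-$ — hence only those are parity-stable submodules, and $M_\sigma$ is irreducible as a parity-stable twisted module, completing the odd case.
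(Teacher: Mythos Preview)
Your proof is correct. For parity stability of $M_\sigma$ (both parities of $d$) you and the paper do the same thing: put the exterior-degree-mod-$2$ grading on $M_\sigma$ and observe that the generating currents are odd.

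For parity instability of $M_\sigma^\pm$ when $d$ is odd, your route genuinely differs from the paper's. The paper argues by a single explicit computation: it applies $Y^\sigma(\epsilon(-1/2)\mathbf{1},x)$ to the vector $(1\pm\overline{\epsilon(0)}1)$ and reads off that the $x^{-1/2}$-coefficient is $\pm(1\pm\overline{\epsilon(0)}1)$ itself, so an odd operator has this vector as an eigenvector with nonzero eigenvalue and hence it cannot be $\mathbb{Z}_2$-homogeneous in any compatible grading. This is short and concrete, but as written it only shows that \emph{this particular vector} is not homogeneous; that suffices immediately when $d=1$ (the lowest weight space is then one-dimensional and hence forced to be homogeneous), while for general odd $d$ one must add a word about why some such vector must be homogeneous. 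Your argument instead works with the entire lowest weight space $C^+$ and the rank-$(2l+1)$ Clifford algebra acting on it, using that the even Clifford subalgebra already exhausts $\mathrm{End}(C^+)$ to exclude any involution anticommuting with all zero modes. This is more structural and handles all odd $d$ uniformly without appeal to a distinguished vector; the price is that it invokes the representation theory of Clifford algebras rather than a one-line computation. Your explicit commuting involution $\Gamma$ exhibiting $M_\sigma^\pm$ as submodules, and your final step showing $\sigma_{M_\sigma}$ swaps $M_\sigma^+$ and $M_\sigma^-$ to deduce irreducibility in the parity-stable sense, are also spelled out more fully than in the paper, which treats these as already established.
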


\begin{proof} We first show that $M_\sigma$ is parity stable for $d$ even or odd.   Define a $\mathbb{Z}_2$-grading on $M_\sigma$ via the natural $\mathbb{Z}_2$-grading on $\bigwedge(\hat{\mathfrak{h}}[\sigma]_+ \oplus \mathfrak{a}^+)$ for $d$ even and via the natural $\mathbb{Z}_2$-grading on $\bigwedge(\hat{\mathfrak{h}}[\sigma]_+ \oplus \mathfrak{a}^+ \oplus \mathfrak{e})$ for $d$ odd.   That is $w = \beta_1(-n_1) \beta_2(-n_2) \cdot \beta_m (-n_m)1 \in M^\sigma$ for $n_j \in \mathbb{Z}_+$ if $\beta_j \in \hat{\mathfrak{h}}[\sigma]_+$ and $n_j \in \mathbb{N}$ if $\beta_j \in \mathfrak{a}^+ \oplus \mathfrak{e}$ has odd parity if $m$ is odd and even parity if $m$ is even.   

Then $Y^\sigma (\alpha^{(j)} (-1/2)\mathbf{1}, x) \cdot  w = \alpha^{(j)} (x)^\sigma \cdot w = \sum_{n \in \mathbb{Z}} \overline{\alpha(n)} \cdot w x^{-n-1/2}$ is contained in $M_\sigma^{(m + 1) \mathrm{mod} \, 2} [[x^{1/2}, x^{-1/2}]]$, implying $v_n^\sigma \cdot M^{(j)}_\sigma \subset  M^{(j + |v|) \mathrm{mod} \, 2}_\sigma$ for all $v \in V_{fer}^{\otimes d}$ and $w \in M_\sigma$.

However, for $d$ odd, considering the irreducible modules $M^\pm_\sigma$, we have, for instance 
\begin{multline}
Y^\sigma( \epsilon(-1/2) \mathbf{1}, x) \cdot (1 \pm \overline{\epsilon (0)}1) \\
= \pm (1 \pm \overline{\epsilon(0)}1) x^{-1/2} + (1 \mp \overline{\epsilon(0)}1) \sum_{-n\in \mathbb{Z}_+} \overline{\epsilon(n)}1 x^{ -n -1/2}.
\end{multline}
Thus there exists no $\mathbb{Z}_2$-grading on $M_\sigma^\pm$ such that this lowest weight vector $(1 \pm \overline{\epsilon (0)}1)$ has a parity compatible with the $\mathbb{Z}_2$-grading of $V_{fer}^{\otimes d}$.  
\end{proof}

From Theorem \ref{parity-stability-theorem}, we have that $(M^-_\sigma, Y^\sigma \circ \sigma)$ must be a parity-unstable parity-twisted module that is isomorphic to $(M^+_\sigma, Y^\sigma)$.  This isomorphism is given explicitly by 
\begin{eqnarray}
f : M^+_\sigma &\longrightarrow& M^-_\sigma \\
 \left((1 +  \overline{\epsilon(0)}1) w_0, (1 - \overline{\epsilon(0)}1) w_1\right) &\mapsto&  \left((1 -  \overline{\epsilon(0)}1) w_0, - (1 + \overline{\epsilon(0)}1) w_1\right), \nonumber
\end{eqnarray}
for $w_j \in W^{(j)}$ for $j = 0,1$.

\subsection{Permutation-twisted modules for free fermions}\label{permutation-dong-zhao-section}

Now we turn our attention to tensor product vertex operator superalgebras.
Let $V=(V,Y,{\bf 1},\omega)$ be a vertex operator superalgebra, and let $k$ be a
fixed positive integer.  Then $V^{\otimes k}$ is also a vertex operator superalgebra, and the permutation group $S_k$ acts naturally on $V^{\otimes k}$ as signed automorphisms.  That is
$(j \; j+1) \cdot (v_1 \otimes v_2 \otimes \cdots \otimes v_k) = (-1)^{|v_j||v_{j+1}|} (v_1 \otimes v_2 \otimes \cdots v_{j-1} \otimes v_{j+1} \otimes v_j \otimes v_{j+2} \otimes \cdots \otimes v_k)$, and we take this to be a left action so that, for instance
\begin{eqnarray}
\qquad \ \ (1 \; 2 \cdots  k) : V \otimes V \otimes \cdots \otimes V \! \! \! \! &\longrightarrow & \! \! \! \! V \otimes V \otimes \cdots \otimes V\\
v_1 \otimes v_2 \otimes \cdots \otimes v_k \! \! \! \! \! \! & \mapsto & \! \! \! \! \!  \! (-1)^{|v_1|(|v_2| + \cdots + |v_k|)} v_2 \otimes v_3 \otimes \cdots \otimes v_k \otimes v_1. \nonumber
\end{eqnarray}

Letting $V = V_{fer}$, we have that $g=(1 \; 2 \cdots  k)$ acting as a signed permutation on $V_{fer}^{\otimes k}$ is a lift of the following permutation on $\mathfrak{h}$, the $k$-dimensional Heisenberg Lie superalgebra used to construct $V_{fer}^{\otimes k}$:  Let $\alpha^{(j)}$, for $j = 1,\dots, d=k$ be an orthonormal basis for $\h$ as before.  Then 
\begin{eqnarray}
\qquad \ \ (1 \; 2 \cdots  k) : \mathbb{C}\alpha^{(1)} \oplus \mathbb{C}\alpha^{(2)} \oplus \cdots \oplus \mathbb{C}\alpha^{(k)} \! \! \! \! &\longrightarrow & \! \! \! \! \mathbb{C}\alpha^{(1)} \oplus \mathbb{C}\alpha^{(2)} \oplus \cdots \oplus \mathbb{C}\alpha^{(k)}\\
(c_1, c_2, \dots, c_k) & \mapsto &  (c_2, c_3, \dots, c_k, c_1), \nonumber
\end{eqnarray}
that is $(1 \; 2 \cdots  k) \alpha^{(j)} = \alpha^{(j-1)}$ for $j = 1, \dots, k$ where $\alpha^{(-1)}$ is understood to be $\alpha^{(k)}$.

Defining $\mathfrak{h}^{0} = \{ h \in \mathfrak{h} \; | \; gh = h \}$, we have that $\mathfrak{h}^{0} = \mathbb{C}\beta$ with $\beta = \sum_{j =1}^k \alpha^{(j)}$.  

Thinking of $\mathfrak{h}$ as a purely odd super vector space, we also have a parity map on $\mathfrak{h}$ denoted by $\sigma_{\mathfrak{h}}$ which of course just acts as multiplication by $-1$.  Then defining $\mathfrak{h}^{0*} = \{ h \in \mathfrak{h} \; | \; g\sigma_{\mathfrak{h}} h = h \}$, we have that $\mathfrak{h}^{0*} = \{ (c_1, c_2, \dots, c_k) \in \mathfrak{h} \; | \; c_j = - c_{j+1} \; \mbox{for $1 \leq j \leq k-1$ and} \; c_k = -c_1 \}$.  So that $\mathfrak{h}^{0*}$ is of dimension 1 if $k$ is even and is of dimension 0 if $k$ is odd.

Since from the first author's work in \cite{B-superpermutation-odd}, we already have a unified construction and classification of all $(1 \; 2 \; \cdots \; k)$-twisted $V^{\otimes k}$ modules for $k$ odd and $V$ any vertex operator superalgebra, we turn our attention here to the case when $k$ is even, following \cite{DZ}.  In this case, according to \cite{DZ}, we should obtain two equivalence classes of irreducible parity-unstable $(1 \; 2 \; \cdots \; k)$-twisted $V^{\otimes k}_{fer}$ modules for $k$ even.  

Letting $g = (1 \; 2 \; \cdots \; k)$ for $k$ odd, we consider the $g\sigma_{\mathfrak{h}}$-eigenspaces
\begin{equation}\label{hnfgrading}
\h_{(n)}^f = \{ h \in \h \; | \; g\sigma_\h h = \eta^{n} h \} \subset \h,
\end{equation}
for $\eta$ a fixed primitive $k$th root of unity.  And so in terms of our discussion above, $\h^{0*} = \h_{(0)}^f$.   (In the notation of \cite{DZ}, we have $\h^f_{(n)} = H^{n*}$.)  We use the $f$ superscript to denote this fermionic setting as opposed to the bosonic setting of the lattice we will encounter latter in Section \ref{general-twisted-section}.

We have $\h = \coprod_{n \in \mathbb{Z}/k\mathbb{Z}} \h^f_{(n)}$, where
we identify $\h^f_{(n \; \mathrm{mod} \; k)}$ with $\h^f_{(n)}$ for $n \in
\mathbb{Z}$.
For $n \in \mathbb{Z}/k\mathbb{Z}$, denote by $P_n : \h \longrightarrow \h^f_{(n)}$,
the projection onto $\h^f_{(n)}$, and for $h \in \h$ and $n \in
\mathbb{Z}$, set $h_{(n)} = P_{(n \; \mathrm{mod}\; k)} h$.  In
general, we have that for $h \in \h$ and $n \in \Z$,
\begin{equation}\label{h_n-formula-fermion}
h_{(n)} = \frac{1}{k} \sum_{j=0}^{k-1} \eta^{-nj} (g \sigma_\h)^j h.
\end{equation}
Then it is clear that $\mbox{dim }{\h^f_{(n)}}=1$, for $0\leq n\leq k-1$.   In fact, $\alpha_{(n)}^{(1)}$ can be taken as a basis for each $\h^f_{(n)}$.

 Viewing $\h$ as an abelian Lie superalgebra concentrated in the odd component, let
\begin{equation}
\hat{\h}^f[g] = \coprod_{n\in\frac{1}{k} \Z} \h^f_{(kn)}\otimes
t^{n}\oplus \C {\bf k}
\end{equation}
with $\mathbb{Z}_2$-grading given by $\mathrm{sgn}(\alpha \otimes t^n)= 1$ for $n \in \frac{1}{k}\mathbb{Z}$, and $\mathrm{sgn}(\mathbf{k})= 0$, and Lie super-bracket relations 
\begin{equation} \label{commutation-dong-zhao}
\bigl[\mathbf{k}, \hat{\mathfrak{h}}^f[g] \bigr] =  0,  \quad \mbox{and} \quad
\bigl[\alpha \otimes t^m, \beta \otimes t^n \bigr] =  \langle \alpha, \beta \rangle  \delta_{m + n,0} \mathbf{k}
\end{equation}
for $\alpha \in{\h^f}_{(km)}$, $\beta \in{\h^f}_{(kn)}$, and
$m,n\in\frac{1}{k} \Z$.
Then $\hat{\mathfrak{h}}^f[g]$ is a $\frac{1}{k}\mathbb{Z}$-graded Lie superalgebra 
\[\hat{ \mathfrak{h} }^f[g] = \coprod_{n \in\frac{1}{k}\mathbb{Z}}\hat{ \mathfrak{h}}^f[g]_n \]
where $\hat{\mathfrak{h}}^f[g]_0 = \mathfrak{h}^f_{(0)} \oplus \mathbb{C} \mathbf{k}$, and  $\hat{\mathfrak{h}}^f[g]_n = \mathfrak{h}_{(kn)} \otimes t^{-n}$ for $n \neq 0$.  And $\hat{\mathfrak{h}}^f[g]$ is a Heisenberg superalgebra. 

Then $\hat{\h}^f[g]$ has the following graded subalgebras
\begin{equation}
\hat{\h}^f[g]_{+}=\coprod_{n<0}{\h^f}_{(kn)}\otimes t^{n}, \quad \mbox{and} \quad
\hat{\h}^f[g]_{-}=\coprod_{n>0}{\h^f}_{(kn)}\otimes t^{n},
\end{equation}
and we have $\hat{\h}^f[g] = \hat{\h}^f[g]_{-} \oplus \h_{(0)}^f \oplus \hat{\h}^f[g]_+.$

Let $\mathbb{C}$ be the $\hat{\mathfrak{h}}^f [g]_-\oplus \mathbb{C} \mathbf{k}$-module such that $\hat{\mathfrak{h}}^f[g]_-$ acts trivially and $\mathbf{k}$ acts as $1$.  Set
\begin{equation}
M_g =  U(\hat{\mathfrak{h}}^f[g]) \otimes_{U(\hat{\mathfrak{h}}^f [g]_- \oplus  \mathbb{C} \mathbf{k})} \mathbb{C}.
\end{equation}
Then as a vector space, we have
\begin{equation}
M_g \stackrel{\mathrm{vec.sp.}}{\simeq}
\mbox{$\bigwedge$}(\hat{\mathfrak{h}}^f [g]_+ \oplus   \h^f_{(0)}).
\end{equation}

Let $\alpha \in \mathfrak{h}$ and $n \in \frac{1}{k}\mathbb{Z}$.  We use the notation 
\[\alpha(n)^g = \alpha \otimes t^n \in \hat{\mathfrak{h}}^f[g] .\]
Then $M_g$ is a $\hat{\mathfrak{h}}^f[g]$-module.  The action induced from the supercommutation relations (\ref{commutation-dong-zhao}) is given by 
\begin{eqnarray}
\mathbf{k}\beta (-l)^g \mathbf{1} &=&\beta (-l)^g \mathbf{1} \label{twisted-relations1-dz}\\
\alpha  (n)^g \beta (-m)^g \mathbf{1} &=& \langle \alpha_{(kn)}, \beta_{(km)} \rangle  \delta_{m,n} \mathbf{1} \\
\alpha (-n)^g \beta (-m)^g \mathbf{1} &=& -\beta(-m)^g \alpha(-n)^g \mathbf{1} \\
\alpha(0)^g \beta(0)^g \mathbf{1}  &=&  \frac{1}{2}\langle \alpha_{(0)}, \beta_{(0)} \rangle \mathbf{1}
\end{eqnarray}
for $\alpha, \beta \in \mathfrak{h}$, $m, n \in \frac{1}{k}\mathbb{Z}_+$ and $l\in \frac{1}{k}\mathbb{N}$. (Here $\mathbf{1} = 1 \in M_g$.)
As an associative algebra with identity, $M_g$ is the Clifford algebra generated by $\hat{\mathfrak{h}}^f[g]_+ \oplus \mathfrak{h}^f_{(0)}$ with the corresponding symmetric bilinear form.

For $\alpha \in \mathfrak{h}$, set
\begin{equation} \alpha(x)^g = \sum_{n \in \frac{1}{k}\mathbb{Z}} \alpha (n)^g x^{-n-\frac{1}{2}} .
\end{equation}
Then for the orthonormal basis of $\mathfrak{h}$, $\alpha^{(j)}$, for $j = 1, \dots, k$, we have 
\begin{multline}
[\alpha^{(j)}_{(km)} (x_1)^g, \alpha^{(l)}_{(kn)}(x_2)^g ] \\
= \frac{1}{k} \delta_{j,l} \delta_{m,-n}  x_1^{m+1/2}x_2^{-m-1/2}\left( \frac{1}{(x_1 - x_2)} - \frac{1}{(-x_2 + x_1)} \right)\\
\end{multline}
for $j, l = 1,\dots,k$ and $m,n \in \frac{1}{k} \mathbb{Z}$  implying that the $\alpha^{(j)}(x)^g$, for $j = 1,\dots, k$, are mutually local. 

For $v \in V_{fer}^{\otimes k}$, define $Y^g(v,x) : M_g \longrightarrow M_g [[x^{1/k}, x^{-1/k}]]$ as follows: For $\alpha \in \mathfrak{h}^f_{(r)}$, $n \in \mathbb{N} + \frac{1}{2}$, and $u \in V_{fer}^{\otimes k}$, let
\begin{multline}\label{define-tau-twisted}
Y^g (\alpha(-n) u,x) = Y^g(\alpha_{-n -1/2} u,x) = \mathrm{Res}_{x_1} \mathrm{Res}_{x_0} \left( \frac{x_1 - x_0}{x} \right)^{r/k} x_0^{-n-1/2}\\
\cdot
\left(  x^{-1}_0\delta\left(\frac{x_1-x}{x_0}\right)
\alpha (x_1)^g Y^g(u,x)
-  (-1)^{|u|} x^{-1}_0\delta\left(\frac{x-x_1}{-x_0}\right) Y^g
(u,x) \alpha (x_1)^g  \right) .
\end{multline}
Then since $V_{fer}^{\otimes k} = \langle \alpha^{(j)}(-1/2) \mathbf{1} \; | \; j = 1,\dots, k \rangle$, equation (\ref{define-tau-twisted}) defines $Y^g(v,x)$ iteratively for any $v \in V_{fer}^{\otimes k}$. 

Recalling that the Virasoro element, $\omega_{fer}$, for the free fermionic vertex operator superalgebra $V_{fer}^{\otimes d}$ is given by (\ref{fermionic-Virasoro}), a nontrivial computation shows that 
\begin{multline}\label{L-g-twisted}
L^g(m) = k \sum_{r = 0}^{k-1}  \sum_{{n \in \mathbb{Z}}\atop{n > -\frac{m}{2}}} \left( n+\frac{m}{2} - \frac{r}{k} \right) \alpha^{(1)}_{(r)}(-n+r/k) \alpha^{(1)}_{(-r)}(n+m-r/k) \\ + \frac{k^2+2}{48k} \delta_{m,0}.
\end{multline}
One should compare this with (\ref{L-sigma-twisted}).

From (\ref{L-g-twisted}) it follows that $\left[L^g(-1), Y^g ( \alpha^{(j)} (-1/2) \mathbf{1}, x)\right] = \frac{d}{dx} Y^g(\alpha^{(j)} (-1/2) \mathbf{1}, x)$, and thus, following \cite{Li-twisted}, $M_g$ is a weak $g$-twisted $V^{\otimes k}_{fer}$-module.  It is also admissible.

Similarly to the situation in the parity-twisted case, the  admissible $g$-twisted module, $M_g$, reduces as the direct sum of two irreducible parity unstable admissible $g$-twisted modules, and according to\cite{DZ},  these two irreducibles are the only irreducible admissible $g$-twisted  modules for $V_{fer}^{\otimes k} $, up to isomorphism.

Note that $M_g = \bigwedge\left[ \alpha^{(1)}_{(r)} (-n)^g1\; \big| \; r = 0, \dots, k-1, \; n \in  \mathbb{N} + \frac{r}{k} \right]$.   Thus setting $\alpha = k\alpha^{(1)}_{(0)} = \alpha^{(1)} - \alpha^{(2)} + \alpha^{(3)} + \cdots + \alpha^{(k-1)} - \alpha^{(k)}$, then $\h^f_{(0)} = \h^{0*} = \mathbb{C} \alpha$.    Let $\epsilon = \frac{1}{\sqrt{2k}} \alpha$ so that $\h^f_{(0)} = \mathbb{C} \epsilon$ and $\langle \epsilon, \epsilon \rangle = 1$.   Set 
\begin{equation}
 W = \mbox{$\bigwedge$} \left[ \epsilon (-m)^g 1 \, \Big| \, \mbox{for $m \in \mathbb{Z}_+$, }\right] 
 \end{equation}
and let $W = W^0 \oplus W^1$ be the decomposition of $W$ into even and odd subspaces.  Then  
these parity-unstable subspaces of the irreducible parity-stable module $M_g$ are given by
\begin{multline}
M_g^\pm  = \left(\left(1 \pm  \, \epsilon(0)^g 1\right) W^0 \oplus \left(1 \mp \, \epsilon(0)^g 1\right) W^1\right) \\
\otimes \mbox{$\bigwedge$} \left[\alpha^{(1)}_{(r)}(-n)^g1 \; \Big| \; r = 1, \dots, k-1, \; n \in  \mbox{$\mathbb{N} + \frac{r}{k}$}, \right] .
\end{multline}
Then we have $M_g = M_g^- \oplus M_g^+$ is an ordinary parity-stable irreducible $g$-twisted $V^{\otimes k}_{fer}$-module and $M_g^\pm$ are parity unstable invariant subspaces of $M_g$, i.e. parity unstable irreducible $g$-twisted $V^{\otimes k}_{fer}$-modules.

From (\ref{L-g-twisted}), we have that the $L^g(0)$ grading on $M_g$ is given by 
\begin{equation}
\mathrm{wt} \, \mathbf{1}  =  \frac{k^2+2}{48k},  \quad \mathrm{and} \quad  \mathrm{wt} \, \alpha^{(1)}_{(r)} (-n)^g  \mathbf{1}= n + \frac{k^2+2}{48k}, 
\end{equation}
for $n \in \mathbb{N} + \frac{r}{k}$ and $r =0, \dots, k-1$. Thus the  graded dimension of $M_g$ is 
\begin{equation}
\mathrm{dim}_q M_g \ =\  2 q^{-k/48} q^{(k^2+2)/(48 k)} \prod_{n\in \frac{1}{k}\mathbb{ Z}_+} (1 + q^n) \ = \ \sqrt{2} \mathfrak{f}_2 (q^{1/k}).
\end{equation}

\subsection{A conjecture for $(1 \; 2 \; \cdots \; k)$-twisted $V^{\otimes k}$-modules for $k$ even and $V$ any vertex operator superalgebra}

We make the following two observations, Remarks \ref{compare-grading-remark} and \ref{parity-stable-fermion-remark}, to motivate the conjecture we are about to make.

\begin{rema}\label{compare-grading-remark}
{\em 
Comparing the graded dimension of the $\sigma$ twisted $V_{fer}$-module, $M_\sigma$, to the graded dimension of the $(1\; 2 \; \cdots \; k)$-twisted $V^{\otimes k }_{fer}$ module, $M_g$, we have that 
\begin{equation}
\mathrm{dim}_q M_g = \sqrt{2} \mathfrak{f}_2(q^{1/k}) =  \mathrm{dim}_{q^{1/k}} M_\sigma. 
\end{equation}
This relationship of $q\rightarrow q^{1/k}$ between graded dimensions was in the past observed in the vertex operator algebra setting between untwisted $V$-modules and $(1 \; 2 \; \cdots \; k)$-twisted $V^{\otimes k}$-modules for $k$ even or odd, and was one of the original motivations to the proof that these two categories of modules are in fact isomorphic given in  \cite{BDM}.  That is, it had been observe that the graded dimension of a $(1 \; 2 \; \cdots \; k)$-twisted $V^{\otimes k}$-module was the same as the graded dimension of a $V$-module but with $q$ replaced by $q^{1/k}$.   In \cite{B-superpermutation-odd}, the first author showed that in the case when $V$ is a vertex operator superalgebra, the extension of the construction in \cite{BDM} to an isomorphism of categories between untwisted $V$-modules and $(1 \; 2\; \cdots \; k)$-twisted $V^{\otimes k}$-modules exists in general for a vertex operator superalgebra $V$ if and only if $k$ is an odd positive integer.  Motivated by the relationship between $M_g$ and $M_\sigma$ observed here, we make the conjecture below, Conjecture \ref{first-conjecture}, that for $k$ even, the categorical correspondence is between $(1 \; 2 \; \cdots \; k)$-twisted $V^{\otimes k}$-modules and parity-twisted $V$-modules.
}
\end{rema}

\begin{rema}\label{parity-stable-fermion-remark}
{\em In addition, we have that both the modules $M_\sigma$ and $M_g$ split into parity-unstable invariant subspaces.   This was another one of the motivating examples for Theorem \ref{parity-stability-theorem} as well as further evidence to bolster our conjecture given below, Conjecture \ref{first-conjecture}.
}
\end{rema}

These two observations given above, as well as recent constructions given by the second author, and certain observations given by the first author in \cite{B-superpermutation-odd}, leads us to the following conjecture.

\begin{conj}\label{first-conjecture}
If $V$ is a vertex operator superalgebra, and $k$ is an even positive integer, then the category of weak (parity-stable) $(1 \; 2 \; \cdots \; k)$-twisted $V^{\otimes k}$-modules for $k$ even is isomorphic to the category of weak (parity-stable) parity-twisted $V$-modules.  In addition, the subcategories of weak admissible and  ordinary $(1 \; 2 \; \cdots \; k)$-twisted $V^{\otimes k}$-modules are isomorphic to the subcategories of weak admissible and ordinary parity-twisted $V$-modules, respectively.  Furthermore, all the various subcategories of parity-unstable invariant subspaces, i.e. parity-unstable submodules, coincide.  
\end{conj}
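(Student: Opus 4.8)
The plan is to mimic the architecture of \cite{BDM} and its odd-order super-analogue \cite{B-superpermutation-odd}: construct an explicit pair of mutually inverse functors between the category of weak parity-twisted $V$-modules and the category of weak $(1\;2\;\cdots\;k)$-twisted $V^{\otimes k}$-modules, and then verify that they restrict to the admissible, ordinary, and parity-stable subcategories and match up the parity-unstable invariant subspaces. Throughout, write $g=(1\;2\;\cdots\;k)$ with $k$ even, let $\sigma=\sigma_V$ be the parity involution of $V$, and fix a primitive $k$th root of unity $\eta$, so that $\eta^{k/2}=-1$; it is precisely this last identity that will force the correspondence to land on $\sigma$-twisted, rather than untwisted, $V$-modules.

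First I would build the functor $F$ from weak parity-twisted $V$-modules to weak $g$-twisted $V^{\otimes k}$-modules. Given $(W,Y^\sigma_W)$, the underlying space of $F(W)$ is $W$ itself, and the twisted vertex operator on the generators $\mathbf{1}^{\otimes(j-1)}\otimes v\otimes\mathbf{1}^{\otimes(k-j)}$ ($v$ in slot $j$) is defined by a formula of BDM type,
\[
Y^g\big(\mathbf{1}^{\otimes(j-1)}\otimes v\otimes\mathbf{1}^{\otimes(k-j)},x\big)\;=\;Y^\sigma_W\!\big(\Delta_k(x)\,v,\;\eta^{\,j-1}x^{1/k}\big),
\]
where $\Delta_k(x)$ is the operator built from $L(n)$, $n\ge 0$, arising from the change of variable $x\mapsto (1+x)^k-1$ (containing factors $k^{-L(0)}$ and $x^{(1/k-1)L(0)}$), \emph{suitably modified} so that the half-integer powers of $x$ produced by $Y^\sigma_W$ on odd vectors are reconciled with the $\frac1k\Z$-powers demanded of a $g$-twisted operator. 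The key point is monodromy: cycling an odd generator through all $k$ slots picks up the phase $\eta^{k|v|/2}$ together with the permutation signs on odd tensor factors, and since $\eta^{k/2}=-1$ this product equals $(-1)^{|v|}$, i.e. the action of $\sigma_V$ — so the natural $V$-side structure is parity-twisted. One then defines $Y^g$ on a general $v_1\otimes\cdots\otimes v_k$ by the iterate / normal-ordered product of the slot operators exactly as in \cite{BDM}, \cite{B-superpermutation-odd}.

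In the reverse direction, the functor $G$ sends a weak $g$-twisted $V^{\otimes k}$-module $(M,Y^g)$ to $M$ equipped with the operators obtained by restricting $Y^g$ to the copy $V\otimes\mathbf{1}^{\otimes(k-1)}$, performing the change of variable $x\mapsto x^{k}$, and conjugating back by $\Delta_k^{-1}$; because slot $1$ is not $g$-fixed and $k$ is even, averaging over $\langle g\rangle$ detects the parity, and the resulting operators satisfy the $\sigma$-twisted (not untwisted) Jacobi identity for $V$. That $F$ and $G$ are mutually inverse up to natural isomorphism follows from the standard $\Delta$-operator identities of \cite{BDM}, and compatibility with module homomorphisms is immediate since both functors are the identity on underlying vector spaces. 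For the refinements: the factors $k^{-L(0)}$ and $x^{(1/k-1)L(0)}$ in $\Delta_k(x)$ implement exactly the $q\mapsto q^{1/k}$ rescaling of graded dimensions already observed between $M_\sigma$ and $M_g$ in Remark \ref{compare-grading-remark}, so admissible maps to admissible and ordinary to ordinary; and since the parity operator on $F(W)$ is a shift of $\sigma_W$, the compatibility $Y^g(\sigma_{V^{\otimes k}}u,x)=(-1)^{|u|}Y^g(u,x)$ translates into $Y^\sigma_W(\sigma_V v,x)=(-1)^{|v|}Y^\sigma_W(v,x)$, so parity-stability is preserved; combining this with Theorem \ref{parity-stability-theorem}, a parity-unstable object and its $\sigma$-twist on one side are exactly the two invariant subspaces of the parity-stable object corresponding to the parity-stable object on the other side, which identifies the subcategories of parity-unstable invariant subspaces.

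The hard part will be verifying the $g$-twisted Jacobi identity (\ref{twisted-jacobi}) for the operators produced by $F$, that is, showing that conjugation by the modified $\Delta_k(x)$ converts the $\sigma$-twisted Jacobi identity for $V$ on $W$ into the $g$-twisted Jacobi identity for $V^{\otimes k}$ on $F(W)$. In \cite{BDM} this $\delta$-function computation under $x\mapsto(1+x)^k-1$ is already the technical core; here one must in addition organize the permutation signs on odd tensor factors \emph{together with} the branch choices for $x^{1/k}$ at $x\mapsto\eta^j x$ so that precisely the single twisting $\sigma_V$ emerges — not the trivial twisting, and not a doubled one of the sort that appears for lattice isometries (cf. Remark \ref{doubling-k-remark}). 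Closing up these signs consistently is expected to be the principal obstacle, and is the same phenomenon that produced the odd/even dichotomy in \cite{B-superpermutation-odd}.
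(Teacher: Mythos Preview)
The statement you are addressing is labeled and presented in the paper as a \emph{conjecture}, not a theorem; the paper offers no proof. It is motivated by the free-fermion computations in Sections~\ref{parity-twisted-section} and~\ref{permutation-dong-zhao-section}, the graded-dimension coincidence of Remark~\ref{compare-grading-remark}, the parity-stability observation of Remark~\ref{parity-stable-fermion-remark}, and unspecified ``recent constructions given by the second author,'' together with the analogy to Theorem~\ref{my-theorem} for odd $k$. There is therefore no proof in the paper against which to compare your proposal.

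What you have written is not a proof but a plausible research outline. The architecture you sketch---mutually inverse functors built from a $\Delta_k(x)$-type operator as in \cite{BDM} and \cite{B-superpermutation-odd}, with the monodromy $\eta^{k/2}=-1$ forcing the $\sigma$-twist on the $V$-side---is exactly the natural strategy, and your identification of the obstruction (organizing the permutation signs on odd tensor factors together with the branch choices so that a single $\sigma_V$ emerges in the twisted Jacobi identity) matches what the paper itself flags as the essential difficulty: \cite{B-superpermutation-odd} shows that the BDM construction does \emph{not} extend naively to even $k$, and Remark~\ref{doubling-k-remark} exhibits a closely related order-doubling phenomenon on the lattice side. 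Your phrase ``suitably modified'' for $\Delta_k(x)$ is hiding the entire content of the problem; until that modification is written down and the $\delta$-function computation closed, this remains a program rather than a proof. The paper is in the same position---it states the conjecture and leaves it open.
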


Note that this is in contrast to the results of the first author in \cite{B-superpermutation-odd} where we prove the following:

\begin{thm}\label{my-theorem}(\cite{B-superpermutation-odd}) If $V$ is a vertex operator superalgebra, and $k$ is an odd positive integer, then the category of weak (parity-stable) $(1 \; 2 \; \cdots \; k)$-twisted $V^{\otimes k}$-modules is isomorphic to the category of weak (parity-stable) $V$-modules.   In addition, the subcategories of weak admissible and ordinary $(1 \; 2 \; \cdots \; k)$-twisted $V^{\otimes k}$-modules are isomorphic to the subcategories of weak admissible and ordinary $V$-modules, respectively.   Furthermore, all the various subcategories of parity-unstable invariant subspaces, i.e. parity-unstable submodules, coincide.  
\end{thm}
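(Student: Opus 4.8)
The plan is to adapt the permutation-orbifold construction of \cite{BDM} for tensor product vertex operator algebras to the super setting, following \cite{B-superpermutation-odd}: one exhibits an explicit pair of functors between the category of weak $V$-modules and the category of weak $(1\;2\;\cdots\;k)$-twisted $V^{\otimes k}$-modules, both of which are the identity on underlying vector spaces and on morphisms, so that all of the content lies in transporting the vertex operator map through the formal change of variable $x \mapsto x^{1/k}$ together with a ``sewing'' operator $\Delta_k(x)$ built from $k^{-L(0)}$ and the exponential of a series in the positive Virasoro modes $L(1), L(2), \dots$ of $V$ with universal coefficients (the operator of \cite{BDM}), chosen so as to implement on vertex operators the coordinate change associated to the $k$-fold ramified cover $z \mapsto z^k$. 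Concretely, for a weak $V$-module $(W, Y_W)$ I would define a weak $g$-twisted $V^{\otimes k}$-module structure on $W$ itself, with $g = (1\;2\;\cdots\;k)$ and $\eta$ a fixed primitive $k$th root of unity, by setting, for $v \in V$ and $\iota_j : V \hookrightarrow V^{\otimes k}$ the embedding into the $j$th tensor factor,
\begin{equation*}
Y^g(\iota_j(v), x) \ = \ Y_W\bigl(\Delta_k(\eta^{-j}x^{1/k})\, v, \ \eta^{-j}x^{1/k}\bigr),
\end{equation*}
and extending $Y^g$ to all of $V^{\otimes k}$ by the tensor product formula \eqref{define-tensor-product} and the iterate formula, using that $V^{\otimes k}$ is generated by $\bigcup_j \iota_j(V)$ under the modes, and keeping careful track of the $\Z_2$-signs throughout.

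The technical core is verifying that $(W, Y^g)$ satisfies the axioms of a weak $g$-twisted $V^{\otimes k}$-module. Truncation and $Y^g(\mathbf 1, x) = 1$ are immediate. For the twisted Jacobi identity \eqref{twisted-jacobi}, I would start from ordinary commutativity and associativity (equivalently the Jacobi identity) for $Y_W$ on $W$, substitute $x_i \mapsto x_i^{1/k}$, conjugate by $\Delta_k$ using its defining conjugation property with respect to $Y_V$, and apply the $\delta$-function identity \eqref{delta-identity}; the summation $\frac1k\sum_{j\in\Z/k\Z}\delta(\eta^j(x_1-x_0)^{1/k}/x_2^{1/k})$ on the right-hand side of \eqref{twisted-jacobi} then emerges precisely from the $k$ branches $\eta^{-j}x^{1/k}$ of the $k$th root. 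This is the step where the hypothesis that $k$ is \emph{odd} is essential: the $\Z_2$-sign $(-1)^{|v_1|(|v_2|+\cdots+|v_k|)}$ built into $g$ as a signed automorphism, the signs produced when the half-integrally moded operators evaluated at $\eta^{-j}x^{1/k}$ are commuted past one another, and the action of $k^{-L(0)}$ on odd-weight vectors all combine consistently exactly when $k$ is odd; for $k$ even they do not, which is the ``parity anomaly'' responsible for the free-fermion phenomena of Section \ref{permutation-dong-zhao-section} and for Conjecture \ref{first-conjecture}.

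I would then build the inverse functor $G$: given a weak $g$-twisted $V^{\otimes k}$-module $(M, Y^g)$, set $G(M) = M$ and recover a weak $V$-module structure by inverting the change of variable,
\begin{equation*}
Y_M(v, x) \ = \ Y^g\bigl(\iota_1(\Delta_k(x)^{-1}v),\ x^k\bigr),
\end{equation*}
which is well defined because $Y^g$ on $\iota_j(V)$ for $j \neq 1$ is determined by $Y^g$ on $\iota_1(V)$ through the monodromy relation $Y^g(gw,x) = \lim_{x^{1/k}\to\eta^{-1}x^{1/k}}Y^g(w,x)$ together with the twisted commutator formula, so that $Y^g$ is recovered from $Y_M$; reversing the computation of the second paragraph shows that $Y_M$ satisfies the untwisted Jacobi identity. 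Since $F$ and $G$ are the identity on underlying spaces and on morphisms and the vertex operator maps are mutually inverse by construction, $G\circ F$ and $F\circ G$ are the identity functors, giving the asserted isomorphism of categories and, in particular, the classification of irreducibles. For the remaining assertions I would track one more invariant: one computes $L^g(0) = \frac1k L(0) + \delta_k$ on $M$, with $\delta_k$ an explicit constant depending only on $k$ and the central charge $c$ of $V$ (compare the passage from \eqref{L-sigma-twisted} to \eqref{L-g-twisted} in the free-fermion example), so a $\frac1{2k}\Z$-grading on $M$ compatible with the $g$-twisted operators is the same data as a $\frac12\Z$-grading on $G(M)$ compatible with the $V$-module operators, and $L^g(0)$-eigenspaces correspond to $L(0)$-eigenspaces, which yields the weak admissible and ordinary subcases; and since neither functor changes the underlying vector space, a $\Z_2$-grading on $M$ with $v_m^g M^{(i)} \subseteq M^{(i+|v|)}$ is literally the same as one making $G(M)$ parity stable, so that, invoking Theorem \ref{parity-stability-theorem}, parity-stable modules correspond to parity-stable modules and the parity-unstable invariant subspaces on the two sides coincide.

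The main obstacle I anticipate is the twisted Jacobi identity of the second paragraph, and within it the delicate $\Z_2$-sign bookkeeping; it is exactly this bookkeeping that both forces the restriction to odd $k$ and is the one genuinely new ingredient relative to the vertex operator algebra case of \cite{BDM}. Everything else amounts to a lengthy but essentially routine transcription of the \cite{BDM} argument through the substitution $x \mapsto x^{1/k}$ and the sewing operator $\Delta_k$.
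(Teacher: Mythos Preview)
The paper does not prove this theorem; it is stated with attribution to \cite{B-superpermutation-odd}, and the surrounding text records only that the cited work extends the construction of \cite{BDM} to the super setting and that this extension goes through precisely when $k$ is odd. Your proposal is a faithful outline of exactly that program---the $\Delta_k(x)$ operator, the change of variable $x\mapsto x^{1/k}$, the mutually inverse functors that are the identity on underlying spaces, and the identification of the $\mathbb{Z}_2$-sign bookkeeping as the place where oddness of $k$ enters---so it matches what the paper attributes to \cite{B-superpermutation-odd}.
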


In addition, in \cite{B-superpermutation-odd}, an explicit construction of the weak, weak 
admissible, and ordinary $(1 \; 2 \; \cdots \; k)$-twisted $V^{\otimes k}$-modules for $k$ odd is given in terms of the weak, weak admissible and ordinary  $V$-modules.

\section{Lattice vertex operator superalgebras}\label{lattice-vosa-section}

We recall the notion of a lattice vertex operator superalgebra following the notation and terminology of \cite{FLM3} and using the setting and results of, e.g. \cite{Lepowsky1985}, \cite{FLM2}, \cite{DL1}, \cite{Xu}, and \cite{DL2}.

Let $L$ be a positive-definite integral lattice, with nondegenerate symmetric $\mathbb{Z}$-bilinear form $\langle \cdot, \cdot \rangle$.   We introduce a lattice $L$ with an isometry $\nu$, and two central extensions, $\hat{L}$ and $\hat{L}_{\nu}$. (There should be no confusion between this use of the symbol $L$ and
the operators $L(n)$ for the Virasoro Algebra). The lattice $L$ together with the central extension $\hat{L}$ will be used to construct a vertex operator super algebra $V_L$. The central extension $\hat{L}_{\nu}$ will be used in Section \ref{general-twisted-section} to construct a space $V_L^T$ on which $V_L$ acts via twisted vertex operators.  In Section \ref{corresponding-automorphism-section}, $\nu$ will be specified to the $-1$ isometry and a certain lift $\hat{\nu}$ on $\hat{L}_\nu$ to construct the twisted modules we are interested in.

Let $k$ be a fixed positive integer. The following initial assumptions and conditions are assumed. \\
1. Let $L$ be a positive definite integral lattice with nondegenerate symmetric $\Z$-valued bilinear form $\langle\cdot,\cdot\rangle$, i.e. $L$ is a finitely generated abelian group with positive definite symmetric $\Z$-bilinear form $\langle\cdot,\cdot\rangle:\, L\times L\rightarrow \Z$. \\
2. Let $\nu$ be an isometry of $L$ with period $k$ ($k$ need not be the order of $\nu$, and in fact will be a period that is not the order in the particular case in which we will be interested). \\
3. We fix a primitive $k$th root of unity $\eta$.  Set $\eta_0=(-1)^k\eta$, so that $\eta_0$ is a primitive $2k$th root of unity if $k$ is odd, and $\eta_0 =\eta$ remains a primitive $k$th root of unity if $k$ is even.   

Since $L$ is integral, we can give it a natural $\Z_2$-grading
\begin{equation}
L=L^0\cup L^1,\quad L^j=\{\alpha\in L\,|\,\ \langle  \alpha, \alpha \rangle \in 2 \mathbb{Z}+j\},
\end{equation}
and $L^0$ is an even sublattice of $L$.  We will use the notation $|\alpha| = j$ for $\alpha \in L^j$.

Note that 
\begin{equation}\label{condition1}
\sum_{j = 0}^{k-1} \langle \nu^j \alpha, \alpha \rangle \in \left\{ 
\begin{array}{ll}
|\alpha| + 2\mathbb{Z} & \mbox{if $k$ is odd}\\
|\alpha| + \langle \nu^{k/2} \alpha , \alpha \rangle + 2 \mathbb{Z} & \mbox{if $k$ is even} 
\end{array} \right. .
\end{equation}
In addition, 
\begin{equation}\label{condition2}
\sum_{j = 0}^{k-1} \langle j \nu^j \alpha, \alpha \rangle \in \left\{ 
\begin{array}{ll}
k\mathbb{Z} & \mbox{if $k$ is odd}\\
\frac{k}{2}  \langle \nu^{k/2} \alpha , \alpha \rangle + k \mathbb{Z} & \mbox{if $k$ is even} 
\end{array} \right. .
\end{equation}

\begin{rema}\label{doubling-k-remark}
{\em
If $k$ is even and $\langle \nu^{k/2} \alpha , \alpha \rangle \in 2\mathbb{Z} + |\alpha|$, which can always be arranged by doubling $k$ if necessary, then the expressions in \ref{condition1} and \ref{condition2} are in $|\alpha| + 2\mathbb{Z}$ and $k\mathbb{Z}$, respectively.  For the purposes of this paper, we always assume that if $k$ is even, then $\langle \nu^{k/2} \alpha , \alpha \rangle \in 2\mathbb{Z} + |\alpha|$.  That is we do indeed double $k$ if necessary.  However in the setting of permutation-twisted modules for lattice vertex operator superalgebras, this can not be done.  That is, following but extending \cite{BHL}, taking $L$ to be the orthogonal sum of $k$ copies of $K$ for $k$ even and considering $\nu = (1 \; 2 \; \cdots \; k)$ acting on $L$ in the natural way, then we have $\langle \nu^{k/2} \alpha , \alpha \rangle \in 2\mathbb{Z}$.  But doubling $k$ results in a lift that is of order $2k$, i.e. that is not the permutation automorphism on the tensor product lattice vertex operator superalgebra.  This is another  illustration of the fundamental difference between the nonsuper case or the super case for $k$ odd versus the super case for $k$ even in the permutation twisted setting.   }
\end{rema}

Let $q=k$ if $k$ is even, and  $q=2k$ if $k$ is odd.   We define the $\nu$-invariant functions
\begin{eqnarray}\label{commutator-definitio n-0}
C_0: L \times L &\longrightarrow& \C^\times \\
(\alpha, \beta) &\mapsto& (-1)^{\langle \alpha,\alpha\rangle\langle\beta,\beta\rangle+ \langle \alpha,  \beta \rangle} \nonumber,
\end{eqnarray}
and
\begin{eqnarray}\label{commutator-definition}
C: L \times L &\longrightarrow& \C^\times \\
(\alpha, \beta) &\mapsto& (-1)^{\langle \alpha,\alpha\rangle\langle\beta,\beta\rangle+\sum_{j = 0}^{k-1} \langle \nu^j \alpha,  \beta
\rangle} \eta^{\sum_{j = 0}^{k-1} \langle j \nu^j \alpha , \beta \rangle}
\nonumber \\
& &\quad\quad  =(-1)^{\langle \alpha,\alpha\rangle\langle\beta,\beta\rangle}\prod_{j=0}^{k-1} (-\eta^{j} )^{\langle \nu^j \alpha, \beta
\rangle}. \nonumber
\end{eqnarray}

Note that $C_0$ and $C$ are bilinear into the abelian group $\mathbb{C}^\times$; i.e.,
\[
C(\alpha + \beta, \gamma) = C(\alpha,\gamma)C(\beta, \gamma) \quad \mbox{and} \quad
C(\alpha, \beta + \gamma) = C(\alpha, \beta)C(\alpha, \gamma)\]
for $\alpha, \beta, \gamma \in L$, and similarly for $C_0$.  In addition, we have $C_0 (\alpha, \alpha) = 1$, and by (\ref{condition1}) and (\ref{condition2}),
we have $C(\alpha, \alpha) = 1$. Moreover, $C(\beta, \alpha) = C(\alpha, \beta)^{-1}$.

The maps $C_0$ and $C$ determine uniquely (up to equivalence) two central
extensions of $L$ by the cyclic group $\langle \eta_0 \rangle$,
\begin{equation}\label{exact-0}
1 \rightarrow \langle \eta_0 \rangle \rightarrow \hat{L} \bar{\longrightarrow} L
\rightarrow 1,
\end{equation}
\begin{equation}\label{exact}
1 \rightarrow \langle \eta_0 \rangle \rightarrow \Lnu \bar{\longrightarrow} L
\rightarrow 1,
\end{equation}
with commutator maps $c_0$ and $c_0^{\nu}$, respectively, i.e., such that
\begin{eqnarray}\label{commutator=C0}
aba^{-1} b^{-1} &=& C_0( \bar{a}, \bar{b})  \qquad \mathrm{for} \quad a,b \in
\hat{L} ,\\
aba^{-1} b^{-1} &=& C( \bar{a}, \bar{b})  \qquad \mathrm{for} \quad a,b \in
\Lnu .
\end{eqnarray}
There is a natural set-theoretic identification (which is not an
isomorphism of groups unless $k = 1$ or $k = 2$) between the groups
$\hat{L}$ and $\Lnu$ such that the respective group multiplications
$\times$ and $\times_\nu$ are related by
\begin{equation}\label{identify-central-extensions}
a \times b = \prod_{0<j<k/2} (- \eta^j)^{\langle \nu^{-j} \bar{a}, \bar{b}
\rangle} a \times_\nu b \qquad \mathrm{for} \quad a,b \in \hat{L}.
\end{equation}
Note that this is the exact same relationship as in the even lattice case treated in \cite{FLM2}, \cite{Lepowsky1985}, and \cite{BHL}. Observe further that since $C_0$ is $\nu$-invariant, if we replace the
map $\ \bar{} \ $ in (\ref{exact-0}) by $\nu \circ \ \bar{} \ $, we
obtain another central extension of $L$ by $\langle \eta_0 \rangle$
with commutator map $C_0$.  By uniqueness of the central extension of
$L$, there is an automorphism $\hat{\nu}$ of $\hat{L}$ (fixing
$\eta_0$) such that $\hat{\nu}$ is a lifting of $\nu$, i.e., such that
\begin{equation}\label{nu-on-hatL-0}
(\hat{\nu} a)^{\bar{}} = \nu \bar{a} \quad \mathrm{for} \quad a \in \hat{L} .
\end{equation}
The map $\hat{\nu}$ is also an automorphism of $\Lnu$ satisfying
\begin{equation}\label{nu-on-hatL}
(\hat{\nu} a)^{\bar{}} = \nu \bar{a} \quad \mathrm{for} \quad a \in \Lnu .
\end{equation}
Moreover, we may choose the lifting $\hat{\nu}$ of $\nu$ so that
\begin{equation}\label{nuhata=a}
\hat{\nu} a = a \quad \mathrm{if} \quad \nu \bar{a} = \bar{a}
\end{equation}
(see (\ref{propertyofnuhat}) below).

We now use the central extension $\hat{L}$ to construct a vertex operator
superalgebra $V_L$ equipped with an automorphism $\hat{\nu}$ of
period $k$, induced from the automorphism $\hat{\nu}$ of $\hat{L}$.
 This is essentially a specialized case of the ``unrelativised operators'' in Section 2 of \cite{DL1}, \cite{DL2} and of \cite{Xu}.

Embed $L$ canonically in the $\C$-vector space $\h = \C\otimes_\mathbb{Z} L$, and extend the $\Z$-bilinear form on $L$ to a
$\C$-bilinear form $\langle \cdot, \cdot \rangle$ on $\h$.  The corresponding affine Lie algebra is
\begin{equation}
\hat{\h} = \h \otimes \C[t,t^{-1}] \oplus \C {\bf k},
\end{equation}
with brackets determined by
\begin{equation}
[{\bf k},\hat{\h}]=0 \qquad \mbox{and} \qquad [\alpha \otimes t^m, \beta \otimes t^n]=\langle \alpha , \beta \rangle m\delta_{m+n,0}{\bf k} 
\end{equation}
for $\alpha, \beta \in{\h}$, and $m,n\in \Z$.
Then $\hat{\h}$ has a $\Z$-gradation, the {\it weight gradation},
given by
${\rm wt}\,(\alpha \otimes t^n)=-n$  and ${\rm wt}\, {\bf k}=0$,
for $\alpha \in {\h}$ and $n\in \Z$.

Set
\begin{equation}
\hat{\h}^+={\h}\otimes t \C[t] \ \ {\rm and} \ \ \hat{\h}^-={\h}\otimes t^{-1}
\C[t^{-1}].
\end{equation}
The subalgebra $\hat{\h}_{\Z} = \hat{\h}^+\oplus\hat{\h}^-\oplus \C {\bf k}$
of $\hat{\h}$ is a Heisenberg algebra, in the sense that its
commutator subalgebra equals its center, which is one-dimensional.
Consider the induced $\hat{\h}$-module, irreducible even under
$\hat{\h}_{\Z}$, given by
\begin{equation}
M(1)=U(\hat{\h})\otimes_{U( \h \otimes \C [t] \oplus \C {\bf k})} \C \simeq
S(\hat{\h}^-)\ \ \ (\mbox{linearly}),
\end{equation}
where ${\h}\otimes \C [t]$ acts trivially on $\C$ and ${\bf k}$ acts as 1,
$U(\cdot)$ denotes universal enveloping algebra and $S(\cdot)$ denotes
symmetric algebra.  The $\hat{\h}$-module $M(1)$ is $\Z$-graded so
that wt\,1\ =\ 0 (where we write 1 for $1\otimes 1$)
\begin{equation}
M(1)=\coprod_{n\in \mathbb{N}} M(1)_n,
\end{equation}
where $M(1)_n$ denotes the homogeneous subspace of weight $n$.

Form the induced $\hat{L}$-module and $\C$-algebra
\begin{equation}
\C \{L\}  \ = \  \C [\hat{L}] \otimes_{\mathbb{C}[ \langle \eta_0 \rangle] } \C \ 
 \simeq  \ \C [L] \qquad  (\mbox{linearly}), 
\end{equation}
where $\C [\cdot]$ denotes group algebra.  For $a \in \hat{L}$, write
$\iota(a)$ for the image of $a$ in $\C \{L\}$. Then the action of $\hat{L}$
on $\C\{L\}$ is given by
\begin{equation}
a\cdot\iota(b)=\iota(a)\iota(b)=\iota(ab)
\end{equation}
for $a,b\in\hat{L}$. We give $\C \{L\}$ the $\C$-gradation determined by
\begin{equation}
\mbox{wt}\,\iota(a)=\frac{1}{2}\langle \bar{a},\bar{a}\rangle \ \ \ \
\mbox{for}\ \
a\in \hat{L}.
\end{equation}
Also define a grading-preserving action of ${\h}$ on $\C\{L\}$ by
\begin{equation}
h\cdot\iota(a)= \langle h,\bar{a}\rangle \iota(a)
\end{equation}
for $h\in{\h}$, and define
\begin{equation}
x^h\cdot\iota(a) = x^{\langle h,\bar{a}\rangle }\iota(a)
\end{equation}
for $h\in{\h}.$  Set
\begin{equation}
V_L  \ = \ M(1)\otimes_{\C} \C \{L\}  \ \simeq  \  S(\hat{\h}^-)\otimes \C [L]  \qquad (\mbox{linearly}) 
\end{equation}
and give $V_L$ the tensor product $\C$-gradation
\begin{equation}
V_L=\coprod_{n\in \C}(V_L)_n.
\end{equation}
We have wt$\,\iota(1)=0,$ where we identify $\C \{L\}$ with $1\otimes
\C \{L\}$.  Then $\hat{L}$, $\hat{\h}_{\Z}$, $h$, $x^h$ $(h\in{\h})$
act naturally on $V_L$ by acting on either $M(1)$ or $\C \{L\}$ as
indicated above. In particular, ${\bf k}$ acts as 1.

For $\alpha \in \h$, $n\in \Z$, we write $\alpha(n)$ for the operator
on $V_L$ determined by $\alpha\otimes t^n$. For $\alpha\in \h,$ set
\begin{equation}
\alpha(x)=\sum_{n\in \Z} \alpha(n) x^{-n-1}.
\end{equation}
We use a normal ordering procedure, indicated by open colons, which
signify that the enclosed expression is to be reordered if necessary
so that all the operators $\alpha(n)$, for $\alpha\in \h$, with $n<0$, as well as the operator $a$ for $a\in \hat{L}$, are to be placed to the left of all the operators
$\alpha(n)$ and $x^{\alpha}$, for $\alpha \in \h$ and $n\ge 0$,  before the
expression is evaluated. For $a \in \hat{L}$, set
\begin{equation}
Y(a,x)= \ _\circ^\circ \ e^{\int(\bar{a}(x)- \bar{a}(0)x^{-1})} a x^{\bar{a}}
\ _\circ^\circ ,
\end{equation}
using an obvious formal integration notation.  Let $a \in \hat{L}$,
$\alpha_1, \dots, \alpha_m \in \h$, $n_1,\dots, n_m \in \Z_+$ and set
\begin{eqnarray}
v &=& \alpha_1(-n_1)\cdots \alpha_m(-n_m) \otimes \iota(a)\\
&=& \alpha_1(-n_1)\cdots \alpha_m(-n_m) \cdot \iota(a) \in V_L. \nonumber
\end{eqnarray}
Define
\begin{equation}
Y(v,x) = \ _\circ^\circ  \left(\partial_{n_1-1} \alpha_1(x)\right)\cdots
\left(\partial_{n_m-1}\alpha_m(x)\right)Y(a,x)
\ _\circ^\circ ,
\end{equation}
where again, for $n \in \mathbb{N}$, we use the notation $\partial_n = \frac{1}{n!} \left(\frac{d}{dx}\right)^n$.
This gives us a well-defined linear map
\begin{equation}
V_L \rightarrow (\mbox{End}\,V_L)[[x, x^{-1}]], \quad
v \mapsto Y(v,x)=\displaystyle{\sum_{n\in \Z}}v_nx^{-n-1}.
\end{equation}
Set ${\bf 1} = 1 = 1 \otimes 1 \in V_L$ and $\omega = \frac{1}{2} \sum_{i=1}^{{\rm dim} \, {\h}} h_i(-1)h_i(-1) {\bf 1}$,
where $\{h_j\, | \, j = 1, \dots, \mathrm{dim} \, \mathfrak{h} \}$ is an orthonormal basis of $\h$. Then $V_L = (V_L, Y,
{\bf 1}, \omega)$ is a vertex operator superalgebra of central charge $c = {\rm dim} \, {\h} = {\rm rank} \, L$.
For a proof that this is a vertex operator superalgebra, see for instance Chapter 6.1 of \cite{Xu}.

\begin{rema}\label{independenceofchoices}
{\em The construction of the vertex operator superalgebra $V_L$ depends on
the central extension (\ref{exact-0}) subject to
(\ref{commutator=C0}), and hence on the choices of $k\in \Z_+$ and the
primitive root of unity $\eta$.  But it is a standard fact that $V_L$
is independent of these choices, up to isomorphism of vertex
operator superalgebras preserving the $\hat{\h}$-module structure; see for instance Proposition
6.5.5, and also Remarks 6.5.4 and 6.5.6, of \cite{LL}.  In particular,
$V_L$ as constructed above is essentially the same as $V_L$
constructed {}from a central extension of the type (\ref{exact-0})
subject to (\ref{commutator=C0}) but with the kernel of the central
extension replaced by the group $\langle \pm 1 \rangle$.  For the
purpose of constructing twisted modules, it is valuable to have this
flexibility, and we will use this property of lattice vertex
superalgebras below in Section \ref{general-twisted-section}.}
\end{rema}

\section{Twisted modules for a lattice vertex operator superalgebra and a lift of a lattice isometry}\label{general-twisted-section}

Following \cite{Lepowsky1985}, \cite{FLM2}, \cite{FLM3}, \cite{DL2}, \cite{Xu}, we recall the construction and classification of $\hat{\nu}$-twisted $V_L$-modules for a general lattice isometry $\nu$ and a lift $\hat{\nu}$.  

Following  \cite{Lepowsky1985}, but extended to integral lattices, we note that the
automorphism $\nu$ of $L$ acts in a natural way on ${\h},$ on
$\hat{\h}$ (fixing $\textbf{k}$) and on $M(1)$, preserving the gradations, and
for $u\in \hat{\h}$ and $m\in M(1),$
\begin{equation}
\nu (u\cdot m)=\nu (u)\cdot \nu(m).
\end{equation}
The automorphism $\nu$ of $L$ lifted to the automorphism $\hat{\nu}$ of
$\hat{L}$
satisfies
\begin{equation}
\hat{\nu}(h\cdot \iota(a))=\nu(h)\cdot \hat{\nu}\iota(a),
\end{equation}
for $h \in \h$ and $a \in \hat{L}$, and for $b \in \hat{L}$ we have
\begin{equation}
\hat{\nu}(\iota(a)\iota(b))=\hat{\nu} (a\cdot \iota(b)) = \hat{\nu}(a) \cdot
\hat{\nu} \iota(b) = \hat{\nu} \iota(a) \hat{\nu} \iota(b),
\end{equation}
\begin{equation}
\hat{\nu}(x^h \cdot \iota(a)) = x^{\nu(h)} \cdot \hat{\nu}\iota(a).
\end{equation}
Thus we have a natural grading-preserving automorphism of $V_L$, which
we also call $\hat{\nu}$, which acts via $\nu\otimes\hat{\nu},$ and this action
is compatible with the other actions
\begin{eqnarray}
\hat{\nu}(a\cdot v) &=& \hat{\nu}(a)\cdot\hat{\nu}(v)\\
\hat{\nu}(u\cdot v) &=& \nu(u)\cdot \hat{\nu}(v) \\
\hat{\nu}(x^h\cdot v) &=& x^{\nu(h)}\cdot \hat{\nu}(v)
\end{eqnarray}
for $a\in \hat{L}$, $u\in \hat{\h}$, $h\in {\h}$, and $v\in V_L$, so
that $\hat{\nu}$ is an automorphism of the vertex operator superalgebra
$V_L$. 

Recalling our fixed primitive $k$th root of unity $\eta$ {}from Section
\ref{lattice-vosa-section}, for $n \in \mathbb{Z}$ set
\begin{equation}\label{hngrading}
\h_{(n)} = \{ h \in \h \; | \; \nu h = \eta^{n} h \} \subset \h,
\end{equation}
so that $\h = \coprod_{n \in \mathbb{Z}/k\mathbb{Z}} \h_{(n)}$, where
we identify $\h_{(n \; \mathrm{mod} \; k)}$ with $\h_{(n)}$, for $n \in
\mathbb{Z}$. Then in general,
\begin{equation}
\h_{(n)} = \{ h + \eta^{-n} \nu h + \eta^{-2n} \nu^2 h + \cdots +
\eta^{-(k-1)n} \nu^{k-1} h \; | \; h \in \h \}.
\end{equation}
For $n \in \mathbb{Z}/k\mathbb{Z}$, denote by
\begin{equation}\label{Pn}
P_n : \h \longrightarrow \h_{(n)},
\end{equation}
the projection onto $\h_{(n)}$, and for $h \in \h$ and $n \in
\mathbb{Z}$, set $h_{(n)} = P_{(n \; \mathrm{mod}\; k)} h$.  In
general, we have that for $h \in \h$ and $n \in \Z$,
\begin{equation}\label{h_n-formula}
h_{(n)} = \frac{1}{k} \sum_{j=0}^{k-1} \eta^{-nj} \nu^j h.
\end{equation}
Viewing $\h$ as an abelian Lie algebra, consider the $\nu$-twisted
affine Lie algebra
\begin{equation}
\hat{\h}[\nu] = \coprod_{n\in\frac{1}{k} \Z} \h_{(kn)}\otimes
t^{n}\oplus \C {\bf k}
\end{equation}
with brackets determined by
\begin{equation}
[{\bf k},\hat{\h}[\nu]]=0 \quad \mbox{and} \quad [\alpha \otimes t^m, \beta \otimes t^n]=\langle \alpha , \beta \rangle
m\delta_{m+n,0}{\bf k}
\end{equation}
for $\alpha \in{\h}_{(km)}$, $\beta \in{\h}_{(kn)}$, and
$m,n\in\frac{1}{k} \Z$.

Define the {\em weight gradation} on $\hat{\h}[\nu]$ by ${\rm wt}\,(\alpha \otimes t^{n})=-n$, $ {\rm wt}\,{\bf k}=0$,
for $n\in \frac{1}{k} \Z$, $\alpha \in {\h}_{(kn)}$.  Set
\begin{equation}
\hat{\h}[\nu]^+=\coprod_{n>0}{\h}_{(kn)}\otimes t^{n},\ \ \quad
\hat{\h}[\nu]^-=\coprod_{n<0}{\h}_{(kn)}\otimes t^{n}.
\end{equation}
Now the subalgebra
\begin{equation}\label{define-non-zero-h-subalgebra}
\hat{\h}[\nu]_{\frac{1}{k} \Z}=\hat{\h}[\nu]^+\oplus
\hat{\h}[\nu]^-\oplus \C {\bf k}
\end{equation}
of $\hat{\h}[\nu]$ is a Heisenberg algebra. Form the induced
$\hat{\h}[\nu]$-module
\begin{equation}
S[\nu]=U(\hat{\h}[\nu])\otimes_{U(\coprod_{n \ge 0}{\h}_{(kn)}\otimes
t^{n}\oplus \C {\bf k})} \C \simeq S(\hat{\h}[\nu]^-) \qquad  {\rm (linearly)},
\end{equation}
where $\coprod_{n\ge 0} \h_{(kn)} \otimes t^{n}$ acts trivially on
$\C$ and ${\bf k}$ acts as 1.  Then $S[\nu]$ is irreducible under
$\hat{\h}[\nu]_{\frac{1}{k} \Z}.$

Following \cite{DL2}, Section 6, we give the module $S[\nu]$ the
natural $\mathbb{Q}$-grading (by weights) compatible with the action of
$\hat{\h}[\nu]$ and such that
\begin{equation}\label{grade-vacuum}
{\rm wt}\,1 = \frac{1}{4k^2} \sum_{j = 1}^{k-1} j (k-j) {\rm
dim}\,({\h}_{(j)}).
\end{equation}

Following Sections 5 and 6 of \cite{Lepowsky1985} extended to this setting, we have that the automorphisms of $\Lnu$ covering the identity
automorphism of $L$ are precisely the maps $\rho^* : a \rightarrow a \rho
(\bar{a})$ for a homomorphism $\rho : L \rightarrow \langle \eta_0 \rangle$. Similarly, there is a homomorphism $\rho_0 : L \cap
\h_{(0)} \rightarrow \langle \eta_0 \rangle$ such that $\hat{\nu} a =
a \rho_0(\bar{a})$ if $\nu \bar{a} = \bar{a}$.  Now $\rho_0$ can be
extended to a homomorphism $\rho : L \rightarrow \langle \eta_0
\rangle$ since the map $1 - P_0$ induces an isomorphism {}from
$L/(L\cap \h_{(0)})$ to the free abelian group $(1 - P_0)L$.
Multiplying $\hat{\nu}$ by the inverse of $\rho_0^*$ gives us an
automorphism $\hat{\nu}$ of $\Lnu$ satisfying (\ref{nu-on-hatL}) and
\begin{equation}\label{propertyofnuhat}
\hat{\nu} a = a \quad \mathrm{if} \quad \nu \bar{a} = \bar{a},
\end{equation}
as in (\ref{nuhata=a}). 

Next, we wish to construct a space $U_T$ for $\hat{L}_{\nu}$ and $\h_{(0)}$ to act upon which will be a subspace of our twisted module.  Set
\begin{eqnarray}
N &=& \{\alpha \in L \; | \; \langle \alpha ,
\h_{(0)} \rangle= 0 \} \ = \  (1-P_0)\h \cap L, \\
M &=& (1-\nu)L \subset N, \\
R &=& \{\alpha \in N \; | \; C_N(\alpha, N) = 1 \},
\end{eqnarray}
where $C_N$ denotes the map $C$ restricted to $N\times N$.  Note that $M\subset R=Z(N)$ are all subgroups of $L$, where $Z(N)$ denotes the center of $N$. Also, it is clear that $M\subset L^0$, the even sublattice of $L$.  For $\alpha \in \h$, we have $\sum_{j=0}^{k-1} \nu^j \alpha \in
\mathfrak{h}_{(0)}$, and $N\subset\displaystyle\sum_{j=1}^{k-1}\mathfrak{h}_{(j)}$ and thus for $\alpha, \beta \in N$, the commutator map $C$,
defined by (\ref{commutator-definition}) on $N$, simplifies to
\begin{equation}\label{C-on-N1}
C_N ( \alpha, \beta) = (-1)^{\langle \alpha,\alpha\rangle\langle\beta,\beta\rangle}\eta^{ \sum_{j=0}^{k-1} \langle j \nu^j \alpha, \beta
\rangle}.
\end{equation}

Denote by $\hat{Q}$ the subgroup of $\Lnu$ obtained by pulling back any subgroup $Q$ of $L$.
Then $\{a\hat{\nu}a^{-1} \, | \, a\in\hat{L}_\nu\}\subset\hat{M} \subset (\hat{L}^0)_{\nu}$.  Note that 
by (\ref{propertyofnuhat}), we have that $\{a\hat{\nu}a^{-1} \, | \, a\in\hat{L}_\nu\} \cap \langle \eta_0 \rangle = 1$.   

For $ a\in\hat{L}_\nu$ define 
\begin{equation}\label{tau-def}
\tau(a \hat{\nu} a^{-1}) = \eta^{k\langle\bar{a},\bar{a}\rangle/2 -\sum_{j=0}^{k-1}\langle \nu^j \bar{a}, \bar{a} \rangle/2} = \eta^{k\langle\bar{a},\bar{a}\rangle/2 -k\langle\bar{a}_{(0)},\bar{a}_{(0)}\rangle/2}. 
\end{equation}
In addition, for $b \in \hat{M}$,  let
\begin{equation}\label{tau-on-eta}
\tau (\eta_0^j b) = \tau(\eta^j_0 )\tau(b) = \eta^j_0 \tau(b) \quad \mbox{ for $j = 1, \dots, q$}.
\end{equation}

Then we have the following proposition:

\begin{prop} \label{key-prop}
The map $\tau: \hat{M}\rightarrow \mathbb{C}^\times$ given by (\ref{tau-def}) and satisfying (\ref{tau-on-eta}) is a well-defined group homomorphism.  
Moreover, $\tau$ is the unique group homomorphism from $\hat{M}$ to $\mathbb{C}^\times$ satisfying  (\ref{tau-def}) and  (\ref{tau-on-eta}).  In addition, if $\langle \nu^{k/2} \alpha, \alpha \rangle \in 2\mathbb{Z} + |\alpha|$ for all $\alpha \in L$, then the image of $\tau$ lies in $\langle \eta \rangle$.
\end{prop}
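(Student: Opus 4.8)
The plan is to verify the three assertions of the proposition in the natural order: well-definedness, uniqueness, and the integrality of the image under the stated hypothesis. First I would check that $\tau$ is well defined on $\hat M$. The subtlety is that an element of $\{a\hat\nu a^{-1}\mid a\in\hat L_\nu\}$ may be represented as $a\hat\nu a^{-1}$ for several choices of $a$; two such choices $a,a'$ differ by $a'=\eta_0^i a b$ with $\bar b\in L\cap\h_{(0)}$ (since $\nu\bar a=\nu\overline{a'}$ forces $\overline{a'-a}$, more precisely $\overline{a'}\bar a^{-1}$, to be $\nu$-fixed), and by (\ref{propertyofnuhat}), $\hat\nu$ fixes $b$ and $\eta_0$, so $a'\hat\nu(a')^{-1}=a\hat\nu(a)^{-1}$ up to the commutator $C(\bar b,\bar a)$-type correction; one computes directly that $\bar a$ and $\overline{a'}=\bar a+\bar b$ give the same value of the exponent $k\langle\bar a,\bar a\rangle/2-k\langle\bar a_{(0)},\bar a_{(0)}\rangle/2$ because $\bar b\in\h_{(0)}$ is $\nu$-fixed, so $(\bar a+\bar b)_{(0)}=\bar a_{(0)}+\bar b$ and the cross terms cancel against the change in $\langle\bar a,\bar a\rangle$ using that $\langle\bar b,\h_{(0)}^\perp\rangle$ pairs trivially against $N$. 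Combined with (\ref{tau-on-eta}), which prescribes $\tau$ on the $\langle\eta_0\rangle$-direction and on all of $\hat M$ by the bilinearity of $M\subset L$, this pins down a well-defined set map $\hat M\to\C^\times$.

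Next I would show $\tau$ is a homomorphism. Because $\hat M$ is generated by $\langle\eta_0\rangle$ together with lifts of generators of $M=(1-\nu)L$, and because $\tau$ on $\langle\eta_0\rangle$ is the identity character, it suffices to check multiplicativity on two elements $a\hat\nu a^{-1}$ and $a'\hat\nu(a')^{-1}$. Here $a\hat\nu a^{-1}\cdot a'\hat\nu(a')^{-1}$ differs from $(aa')\hat\nu(aa')^{-1}$ by a commutator $c(\cdot,\cdot)$ of the central extension $\hat L_\nu$, evaluated on the relevant bars via (\ref{commutator-definition})–(\ref{C-on-N1}); one then verifies that the quadratic-exponent formula $(\alpha,\beta)\mapsto\eta^{k\langle\alpha+\beta,\alpha+\beta\rangle/2-k\langle(\alpha+\beta)_{(0)},(\alpha+\beta)_{(0)}\rangle/2}$ satisfies exactly the cocycle-type relation that compensates this $C_N$ factor, i.e. the bilinear "defect" of the quadratic form $\alpha\mapsto k\langle\alpha,\alpha\rangle/2-k\langle\alpha_{(0)},\alpha_{(0)}\rangle/2$ on $N$ modulo $2$ matches $\sum_j\langle j\nu^j\alpha,\beta\rangle$ modulo the order of $\eta$, which is precisely (\ref{condition2}) restricted to $N$. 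Uniqueness is then immediate: any homomorphism satisfying (\ref{tau-def}) and (\ref{tau-on-eta}) agrees with $\tau$ on a generating set of $\hat M$, hence everywhere.

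For the last claim, I would use the hypothesis $\langle\nu^{k/2}\alpha,\alpha\rangle\in 2\Z+|\alpha|$ for all $\alpha\in L$, which by Remark \ref{doubling-k-remark} forces the expressions (\ref{condition1}) and (\ref{condition2}) into $|\alpha|+2\Z$ and $k\Z$ respectively. Since for $\alpha\in M\subset L^0$ we have $|\alpha|=0$, the exponent $k\langle\bar a,\bar a\rangle/2-k\langle\bar a_{(0)},\bar a_{(0)}\rangle/2=\tfrac12\big(\sum_{j=0}^{k-1}\langle\nu^j\bar a,\bar a\rangle - \text{(even)}\big)$ — rewritten using $k\langle\bar a_{(0)},\bar a_{(0)}\rangle=\sum_{j=0}^{k-1}\langle\nu^j\bar a,\bar a\rangle$ and (\ref{condition1}) — is an integer, so $\tau(a\hat\nu a^{-1})\in\langle\eta\rangle$; since also $\eta_0=\eta$ when $k$ is even (and the doubled $k$ is even), (\ref{tau-on-eta}) keeps the whole image in $\langle\eta\rangle$. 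The main obstacle I anticipate is the well-definedness/homomorphism bookkeeping in the first two paragraphs: one must carefully track how the two genuinely different group laws $\times$ and $\times_\nu$ interact with $\hat\nu$ on coset representatives, and confirm that the quadratic exponent in (\ref{tau-def}) is exactly the one whose polarization cancels the $C_N$-commutator — this is the crux, and it is where the specific normalization $k\langle\cdot,\cdot\rangle/2$ (rather than, say, $\langle\cdot,\cdot\rangle/2$) is forced.
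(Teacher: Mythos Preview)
Your proposal is correct and follows essentially the same route as the paper: check well-definedness on representatives, verify the homomorphism property by matching the bilinear defect of the quadratic exponent against the commutator $C(\bar a-\nu\bar a,\bar b)$ (the paper carries this out explicitly in one displayed chain of equalities), deduce uniqueness from generation, and invoke (\ref{condition1}) for the image claim. For well-definedness the paper argues slightly differently---it uses the identity $\langle(1-\nu^j)\bar a,(1-\nu^j)\bar a\rangle=2\langle\bar a,\bar a\rangle-\langle\nu^j\bar a,\bar a\rangle-\langle\nu^{k-j}\bar a,\bar a\rangle$ and sums over $j$---but this is equivalent to your orthogonality argument via $\h=\h_{(0)}\oplus\h_{(0)}^\perp$.

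One slip in your image argument: the remark ``since for $\alpha\in M\subset L^0$ we have $|\alpha|=0$'' is irrelevant, because the exponent in (\ref{tau-def}) is written in terms of $\bar a\in L$, not in terms of $(1-\nu)\bar a\in M$, and $\bar a$ need not lie in $L^0$. The actual reason the exponent is an integer is that the hypothesis forces $k$ even, so $k\langle\bar a,\bar a\rangle\in 2\Z$, while (\ref{condition1}) combined with $\langle\nu^{k/2}\bar a,\bar a\rangle\in 2\Z+|\bar a|$ gives $\sum_{j=0}^{k-1}\langle\nu^j\bar a,\bar a\rangle\in 2|\bar a|+2\Z=2\Z$; hence half the difference is integral and $\tau(a\hat\nu a^{-1})\in\langle\eta\rangle$, and since $\eta_0=\eta$ for $k$ even, (\ref{tau-on-eta}) keeps the whole image there.
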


\begin{proof} We first show that $\tau$ is well-defined.  Suppose $a \hat{\nu} a^{-1} = b \hat{\nu} b^{-1}$.  Then $(1 - \nu) \bar{a} = (1 - \nu) \bar{b}$, which implies $(1 - \nu^j) \bar{a} = (1 - \nu^j) \bar{b}$, for $j = 1,\dots,k-1$.  Thus 
\begin{eqnarray*}
2\langle \bar{a}, \bar{a} \rangle - \langle \nu^j \bar{a}, \bar{a} \rangle - \langle \nu^{k-j} \bar{a}, \bar{a} \rangle  & = & \langle (1-\nu^j) \bar{a}, (1-\nu^j) \bar{a} \rangle \  = \  \langle (1-\nu^j) \bar{b}, (1-\nu^j) \bar{b} \rangle \\
&= &2\langle \bar{b}, \bar{b} \rangle - \langle \nu^j \bar{b}, \bar{b} \rangle - \langle \nu^{k-j} \bar{b}, \bar{b} \rangle
\end{eqnarray*}
which implies that 
\begin{equation*}
k\langle\bar{a},\bar{a}\rangle/2 -\sum_{j=0}^{k-1}\langle \nu^j \bar{a}, \bar{a} \rangle/2 = k\langle\bar{b},\bar{b}\rangle/2 -\sum_{j=0}^{k-1}\langle \nu^j \bar{b}, \bar{b} \rangle/2 .
\end{equation*}
Therefore $\tau(a\hat{\nu} a^{-1}) = \tau(b \hat{\nu} b^{-1})$, proving that $\tau$ is well defined.

For $a, b \in \hat{L}_\nu$, we have
\begin{align}
\tau(a\hat{\nu}a^{-1})\tau(b\hat{\nu}b^{-1})&=\eta^{ k\langle\bar{a},\bar{a}\rangle/2 -\sum_{j=0}^{k-1}\langle \nu^j \bar{a}, \bar{a} \rangle/2 +   k\langle\bar{b},\bar{b}\rangle/2 -\sum_{j=0}^{k-1}\langle \nu^j \bar{b}, \bar{b} \rangle/2} \nonumber \\
&=\eta^{k\langle \bar{a}+\bar{b},\bar{a}+\bar{b}\rangle/2  -\sum_{j=0}^{k-1}\langle \nu^j( \bar{a}+\bar{b}), \bar{a}+\bar{b} \rangle/2 +\sum_{j=0}^{k-1}\langle \nu^j \bar{a}, \bar{b} \rangle } \nonumber\\
&=C(\bar{a}-\nu \bar{a},\bar{b})\tau((ba)\hat{\nu}(ba)^{-1}) \nonumber \\
&=C(\bar{a}-\nu \bar{a},\bar{b})\tau(C(\bar{a} - \nu \bar{a} , \bar{b})^{-1} (a\hat{\nu}a^{-1})(b\hat{\nu}b^{-1}) ) \nonumber \\
&=\tau((a\hat{\nu}a^{-1})(b\hat{\nu}b^{-1})). \nonumber 
\end{align}
This proves $\tau$ is a group homomorphism.  Since $\hat{M}$ is the subgroup of $\hat{L}_\nu$ which is a lift of $M$, the uniqueness follows immediately from (\ref{tau-def}) and (\ref{tau-on-eta}).  

The last statement follows from (\ref{condition1}).
\end{proof}

Next we extend $\tau$ to $\hat{R}$, and then to a maximal abelian subgroup $\hat{J}$ of $\hat{N}$.  We first observe that if $\alpha \in N$, then there exists $h \in \mathfrak{h}$, such that 
\begin{equation}
k\alpha \ = \ k h - kh_{(0)}  \ = \ kh - \sum_{j = 0}^{k-1} \nu^j h 
\ = \ \sum_{j = 1}^{k-1} (h - \nu^j h).
\end{equation}
Furthermore for $j = 1, \dots, k$, we have
$h - \nu^j h = (h- \nu h) + (\nu h - \nu^2 h) + \cdots + (\nu^{j-1} h - \nu^j h) \in (1-\nu)\mathfrak{h}$.    Therefore $k\alpha \in (1-\nu)\mathfrak{h}$.  Writing $h = c\beta$ for $c \in \mathbb{C}$ and $\beta \in L$, we have that $k \alpha \in L$ and $k \alpha = c(k\beta - \sum_{j = 0}^{k-1} \nu^j \beta)$.  It follows that $c \in \mathbb{Z}$ and thus $k\alpha \in (1-\nu) L = M$.   
That is
\begin{equation}
kN \subset (1-\nu)L = M.
\end{equation} 
Therefore $N/M$ is a finitely generated torsion group, i.e. it is finite.   Thus $R/M$ is a finite group.  Also $N/M$ finite implies that $\hat{N}/\hat{M}$ and $\hat{N}/\mathrm{ker}\,\tau$ are finite as well.   (The last statement follows from the fact that $\tau( a^k\hat{\nu}a^{-k}) = 1$ for all $a\hat{\nu}a^{-1}\in \hat{M}$.)

We wish to construct an irreducible $\hat{N}$-module, $T$, on which $\hat{M}$ acts as multiplication by the character $\tau$. 

The following is just a restatement of Proposition 6.2 of \cite{Lepowsky1985}, but extended to our setting, and follows directly from Theorem 5.5.1 of \cite{FLM3}.

\begin{prop} \label{extend-tau-prop}
There are exactly $|R/M|$ extensions of $\tau$ to a homomorphism $\chi:\hat{R}\rightarrow\mathbb{C}^{\times}$. For each such $\chi$, there is a unique (up to equivalence) irreducible $\hat{N}$-module on which $\hat{R}$ acts according to $\chi$, and every irreducible $\hat{N}$-module on which $\hat{M}$ acts according to $\tau$ is equivalent to one of these. Every such module has dimension $|N/R|^{1/2}$.  Supposing that $T$ is an irreducible module for $\hat{M}$ such that $\hat{M}$ acts as $\tau$, to construct the $\hat{N}$-module structure for $T$ corresponding to $\chi$, let $J$ be any subgroup of $N$ (necessarily containing $R$) that is maximal such that $C_N$ is trivial on $J$.  Then $\hat{J}$ is a maximal abelian subgroup of $\hat{N}$. Let $\psi:\hat{J}\rightarrow \mathbb{C}^\times$ be any homomorphism extending $\chi$ and denote by $\C_{\psi}$ the $\hat{J}$-module $\mathbb{C}$ with character $\psi$. Then $T$ is isomorphic to the induced $\hat{N}$-module 
\begin{equation}
T=\C[\hat{N}]\otimes_{\C[\hat{J}]}\C_\psi\simeq \C[N/J] \qquad \text{ (linearly)}.
\end{equation}
\end{prop}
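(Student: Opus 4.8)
The plan is to reduce the statement to the standard representation theory of central extensions of finite abelian groups, i.e.\ to Theorem 5.5.1 of \cite{FLM3} (the content of Proposition 6.2 of \cite{Lepowsky1985}), once the relevant finiteness and nondegeneracy facts are in place. First I would record the group-theoretic setup. Since $R=\{\alpha\in N\mid C_N(\alpha,N)=1\}$ and the commutator in $\hat{N}$ is given by $C_N$ on images, the preimage $\hat{R}$ is precisely $Z(\hat{N})$: it contains $\langle\eta_0\rangle$, and every element of $\hat{R}$ commutes with all of $\hat{N}$, while conversely anything central has image in $R$. The quotient $\hat{N}/\hat{R}\cong N/R$ is a finite abelian group — finite because $kN\subset M\subset R$ was established above — and $C_N$ (as in (\ref{C-on-N1})) descends to a $\mathbb{C}^\times$-valued bicharacter on $N/R$ which is alternating (using $C_N(\alpha,\alpha)=1$, already noted) and nondegenerate by the very definition of $R$. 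In particular $|N/R|$ is a perfect square, say $|N/R|=m^2$, any subgroup $J\supseteq R$ of $N$ maximal such that $C_N|_J$ is trivial has $|J/R|=m=|N/J|$ (a maximal isotropic in $N/R$), and $\hat{J}$ is a maximal abelian subgroup of $\hat{N}$ with $[\hat{N}:\hat{J}]=m$.

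Next I would count the extensions of $\tau$. Since the central cyclic subgroups match, $\hat{R}/\hat{M}\cong R/M$, which is finite abelian (a subquotient of the finite group $N/M$). Because $\mathbb{C}^\times$ is divisible, $\tau$ extends from $\hat{M}$ to $\hat{R}$, and the set of such extensions is a nonempty torsor under $\mathrm{Hom}(R/M,\mathbb{C}^\times)$; as $R/M$ is finite abelian this set has cardinality $|R/M|$. This gives exactly $|R/M|$ homomorphisms $\chi:\hat{R}\to\mathbb{C}^\times$ extending $\tau$, each still sending $\eta_0\mapsto\eta_0$.

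Now fix such a $\chi$. On any irreducible $\hat{N}$-module the central subgroup $\hat{R}=Z(\hat{N})$ acts by a scalar character (Schur's lemma applies since $\hat{N}/\ker\tau$, hence the image of $\hat{N}$, is finite), which must be one of the $\chi$'s; so classifying irreducibles on which $\hat{M}$ acts by $\tau$ reduces to classifying, for each $\chi$, the irreducibles on which the center acts by $\chi$. This is the finite Stone--von Neumann situation for the Heisenberg-type group $\hat{N}$ with nondegenerate commutator form on $\hat{N}/\hat{R}$: there is a unique such irreducible up to equivalence, of dimension $|N/R|^{1/2}=m$. For the explicit model I would choose $J$ and $\hat{J}$ as above, extend $\chi$ to a character $\psi$ of $\hat{J}$ (again by divisibility of $\mathbb{C}^\times$), and set $T=\mathbb{C}[\hat{N}]\otimes_{\mathbb{C}[\hat{J}]}\mathbb{C}_\psi\cong\mathbb{C}[N/J]$ linearly, so $\dim T=[\hat{N}:\hat{J}]=|N/R|^{1/2}$. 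Irreducibility of $T$ follows from a standard double-coset argument, or directly: any nonzero submodule is $\hat{R}$-stable, hence contains a $\psi$-weight line for $\hat{J}$, and the $\hat{N}$-translates of such a line span $T$ precisely because the form is nondegenerate. Combining uniqueness for each fixed $\chi$ with the count of $\chi$'s yields the classification, with every module of dimension $|N/R|^{1/2}$.

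I expect the main obstacle to be bookkeeping rather than conceptual: checking that in this integral-lattice super setting $C_N$ of (\ref{C-on-N1}) is genuinely an alternating bicharacter on $N$ valued in roots of unity with radical exactly $R$ — in particular that the super sign $(-1)^{\langle\alpha,\alpha\rangle\langle\beta,\beta\rangle}$ does not disturb the alternating property on $N$ — and that all the quotients ($N/M$, $R/M$, $N/R$, $N/J$) are finite, so that the finite-group theory of \cite{FLM3}, Theorem 5.5.1, applies verbatim. Once these are in hand, the argument is a direct transcription of Proposition 6.2 of \cite{Lepowsky1985}.
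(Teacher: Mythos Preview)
Your proposal is correct and follows the paper's own approach: the paper does not give an independent proof but simply states that the proposition is a restatement of Proposition~6.2 of \cite{Lepowsky1985} and follows directly from Theorem~5.5.1 of \cite{FLM3}, which is exactly the reduction you carry out in detail. The bookkeeping you flag (finiteness of $N/M$, $C_N(\alpha,\alpha)=1$, bimultiplicativity of $C$) is all established in the surrounding text of Section~\ref{lattice-vosa-section} and just before the proposition, so your verification that the finite Heisenberg-type theory of \cite{FLM3} applies goes through without issue.
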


Let $T$ be any $\hat{N}$-module on which $\hat{M}$ acts as multiplication by the character $\tau$ as given by Proposition \ref{extend-tau-prop}.  Form the induced $\Lnu$-module \begin{equation}
U_{T} = \C[\Lnu] \otimes_{\C[\hat{N}]}T. 
\end{equation}
Since $T$ can be viewed as a module for the finite group $\hat{N}/\mathrm{ker}\,  \tau$, we have that $T$ is completely reducible.  Then the structure of $T$ follows from Proposition \ref{extend-tau-prop}, and in the irreducible case,
\begin{equation}
U_T = \C[\Lnu] \otimes_{\C[\hat{N}]}T=\C[\Lnu] \otimes_{\C[\hat{J}]}\C_{\psi} \simeq \C[L/J]  \qquad \text{ (linearly)}.
\end{equation}
The action of $\Lnu$ on $U_T$ is given by
\begin{equation}
a \cdot b \otimes r  =  ab \otimes r,
\end{equation}
for $a,b \in \Lnu$, and $r\in T$, and of course 
\begin{equation}\label{M-action}
(a \hat{\nu} a^{-1}) \cdot b \otimes r = C(\bar{a} - \nu \bar{a}, \bar{b}) (b (a\hat{\nu} a^{-1})) \otimes r = C(\bar{a} - \nu \bar{a}, \bar{b}) b \otimes \tau(a \hat{\nu} a^{-1}) r.
\end{equation} 

Let $\hat{\lambda} \in \mathfrak{h}_{(0)}$ be any fixed element such that
\begin{equation}\label{extra factor}
\langle\alpha,\hat{\lambda}\rangle\in \frac{1}{k}\mathbb{Z}
\end{equation}
for all $\alpha \in L$.  

Define the following action of $\h_{(0)}$ on $U_{T}$ by
\begin{equation}
h \cdot b \otimes r = \langle h, \bar{b}+ \hat{\lambda} \rangle b \otimes r
\end{equation}
for $a,b \in \Lnu$, $r\in T$, $h \in \h_{(0)}$.   Then as operators on $U_{T}$,
\begin{equation}\label{h-operates}
ha=a(\langle h,\bar a\rangle+h)
\end{equation}
for $a \in \Lnu$ and $h \in \h_{(0)}$.  

\begin{rema}{\em
For instance, we could take $\hat{\lambda} =  0$, but in general it can be nonzero, and this gives us other $\h_{(0)}$-module structures, and will result in other $\hat{\nu}$-twisted $V_L$-module structures.  However, for the specialized case we are interested in, since $\mathfrak{h}_{(0)} = 0$ (see (\ref{hn-specialized})), this $\hat{\lambda}$ will be zero.
}
\end{rema}

Note that the projection map $P_0$ (recall (\ref{Pn})) induces an isomorphism {}from $L/N$ to $P_{0}L$, and thus we have a natural isomorphism
\begin{equation}
U_T = \C[P_{0}L]\otimes_{\mathbb{C}}T,
\end{equation}
of $\mathfrak{h}_{(0)}\cup \hat{L}_\nu$-modules.  We extend $U_T$ to a $\hat{\mathfrak{h}}[\nu]$-module by letting $\hat{\mathfrak{h}}[\nu]_{\frac{1}{k}\Z}$ (recalling (\ref{define-non-zero-h-subalgebra})) act trivially.

\begin{rema}{\em In the case that $R=N$, we have a linear isomorphism 
$U_T\simeq \C[P_0 L]$.  Also
$P_{0}L = \frac{1}{k} \left( L \cap \h_{(0)} \right)$, and
so in the case when $R=N$ we have
$U_T \simeq \C\left[\frac{1}{k} \left(L \cap \h_{(0)}\right)\right]$.
This is the case in, for instance, the important setting of permutation-twisted modules for lattice vertex operator superalgebras \cite{BDM}, \cite{BHL}.
}
\end{rema}

Now note that we can write
\begin{equation}
U_T=\coprod_{\alpha \in P_0 L} U_\alpha,
\end{equation}
where
\begin{equation}
U_\alpha =\{u \in U_T \; | \; h \cdot u = \langle h, \alpha+ \hat{\lambda} \rangle u \ \ {\rm
for \ \ } h \in \h_{(0)}\},
\end{equation}
and the actions of $\hat{L}_\nu$ and $\mathfrak{h}_{(0)}$ are compatible in the sense that 
\begin{equation}
a \cdot U_\alpha \subset U_{\alpha +\bar a_{(0)}}
\end{equation}
for $a\in \Lnu$ and $\alpha \in P_0 L$.

We define an End\,$U_T$-valued formal Laurent series $x^h$ for $h\in
\h_{(0)}$ as follows
\begin{equation}
x^h\cdot u = x^{\langle h,\alpha \rangle}u \ \ \ {\rm for}\ \ \alpha \in \mathfrak{h}_{(0)}
\ \ {\rm and} \ \ u \in U_\alpha.
\end{equation}
Then {}from (\ref{h-operates}),
\begin{equation}
x^ha=ax^{\langle h,\bar a\rangle+h}\ \ \ {\rm for}\ \ \ a\in \Lnu
\end{equation}
as operators on $U_T$.   Also, for $h \in h_{(0)},$ if $\langle h, \bar{a}_{(0)} 
\rangle\in \Z$ for all $a\in L$, define the operator $\eta^h$ on $U_T$ by
\begin{equation}\label{oct1}
\eta^h \cdot u = \eta^{\langle h,\alpha \rangle} u
\end{equation}
for $u \in U_{\alpha}$ with $\alpha \in P_{0}L$.

Then for $a\in \Lnu$, and using (\ref{M-action}), we have
\begin{equation}\label{right}
\hat\nu a =a \eta^{-\sum_{j=0}^{k-1} \nu^j \bar{a} + k \langle \bar{a} , \bar{a} \rangle /2 - \sum_{j=0}^{k-1}\langle \nu^j \bar{a}, \bar{a} \rangle/ 2} 
=a \eta^{-k\bar a_{(0)} + k\langle\bar a,\bar a \rangle/2 - k \langle\bar a_{(0)},\bar a_{(0)}\rangle/2} 
\end{equation}
as operators on $U_T$.  

Then we have
\begin{equation}
\hat{\nu}^j a   = a \eta^{-jk\bar a_{(0)} + jk\langle\bar a,\bar a \rangle/2 - jk \langle\bar a_{(0)},\bar a_{(0)}\rangle/2} 
\end{equation}
and thus
\begin{equation}\label{nuhat^k=1}
\hat{\nu}^k a = a,
\end{equation}
for all $a \in \hat{L}_\nu$ acting as operators in $\mathrm{End} \, U_T$, where we recall that we had from the beginning doubled $k$ if necessary (see Remark \ref{doubling-k-remark}).  And thus $\hat{\nu}^k = 1$ on $\hat{L}_\nu$ as well. 

It is shown in, for instance, \cite{Xu} in Chapter 6.2, that $U_T$ is an irreducible $\hat{L}_{\nu}\cup \h_{(0)}$-module when $T$ is irreducible.

Define a $\C$-gradation on $U_T$ by
\begin{equation}\label{grade-U}
{\rm wt}\, u = \frac{1}{2}\langle \alpha , \alpha \rangle \ \ \ {\rm
for}\ \ \alpha \in P_0L \ \ {\rm and} \ \ u \in U_\alpha.
\end{equation}
Then $\hat{\nu}$ preserves this gradation of $U_T$ since $\nu (\alpha) = \alpha$ for $\alpha \in P_0L \subset \mathfrak{h}_{(0)}$.

Form the space
\begin{eqnarray}
V^T_L &=& S[\nu]\otimes U_T \\ &=&
\left(U(\hat{\h}[\nu])\otimes_{U(\coprod_{n \ge 0}{\h}_{(kn)}\otimes
t^{n}\oplus \C {\bf c})} \C\right)
\otimes \left(\C[\Lnu]  \otimes_{\C[\hat{N}]} \C_\tau\right) \nonumber \\
&\simeq & S(\hat{\h}[\nu]^-) \otimes_\C ( \C[P_{0}L] \otimes_\C T), \nonumber
\end{eqnarray}
which is naturally graded (by weights), using the weight gradations of
$S[\nu]$ and $U_T$.
We let $\Lnu,$ $\hat{\h}[\nu]_{\frac{1}{k} \Z},$ ${\h}_{(0)}$ and
$x^h$, for $h\in{\h}_{(0)}$, act on $V_L^T$ by acting on either $S[\nu]$
or $U_T$, as described above.

For $\alpha \in \h$ and $n\in \frac{1}{k} \Z$, write $\alpha^T (n)$ or
$\alpha_{(kn)} (n)$ for the operator on $V_L^T$ associated with
$\alpha_{(kn)}\otimes t^n$, and set
\begin{equation}
\alpha^T (x)=\sum_{n\in\frac{1}{k} \Z} \alpha^T (n)x^{-n-1}
=\sum_{n\in\frac{1}{k} \Z} \alpha_{(kn)} (n) x^{-n-1}.
\end{equation}

Following \cite{Lepowsky1985} and \cite{FLM2}, for $\alpha \in L$, define
\begin{equation}\label{define-sigma}
\rho(\alpha) = \left\{ \begin{array}{ll}
\displaystyle{ 2^{\langle \nu^{k/2} \alpha, \alpha \rangle/2} \prod_{0< j < k/2} (1-\eta^{-j})^{\langle \nu^j \alpha, \alpha
\rangle} } & \mbox{if $k \in
2\mathbb{Z}$} \\
\\
\displaystyle{\prod_{0 < j < k/2  } (1-\eta^{-j})^{\langle \nu^j \alpha, \alpha
\rangle} } & \mbox{if $k \in 2\mathbb{Z} + 1$}
\end{array}
\right. .
\end{equation}
Then $\rho(\nu\alpha)=\rho(\alpha)$.

Using the normal-ordering procedure described above, define the {\it $\hat{\nu}$-twisted vertex
operator} $Y^{\hat{\nu}}(a,x)$ for $a\in \hat{L}$ acting on $V_L^T$ as
follows
\begin{equation}\label{L-operator}
Y^{\hat{\nu}}(a,x)= k^{-\langle \bar{a},\bar{a}\rangle /2} \rho(\bar{a}) \
_\circ^\circ
e^{\int(\bar{a}^T (x)-\bar{a}^T(0)x^{-1})} a x^{\bar{a}_{(0)}+\langle
\bar{a}_{(0)}
,\bar{a}_{(0)}\rangle /2-\langle \bar{a},\bar{a}\rangle /2} \ _\circ^\circ .
\end{equation}
Note that on the right-hand side of (\ref{L-operator}), we view $a$ as an
element of $\Lnu$ using our set-theoretic identification between $\hat{L}$
and $\Lnu$ given by (\ref{identify-central-extensions}).

For $\alpha_1,\dots,\alpha_m \in{\h},$ $n_1,\dots,n_m \in \Z_+$ and
$v=\alpha_1(-n_1)\cdots \alpha_m(-n_m) \cdot \iota(a)\in V_L$, set
\begin{equation}
W(v,x)= \ _\circ^\circ
\left(\partial_{n_1-1}\alpha_1^T(x)\right)\cdots
\left(\partial_{n_m-1}
\alpha_m^T(x)\right)Y^{\hat{\nu}}(a,x) \ _\circ^\circ ,
\end{equation}
where the right-hand side is an operator on $V^T_L$. Extend to all
$v\in V_L$ by linearity.

Define constants $c_{mnr} \in \C$ for $m, n \in \mathbb{N}$ and $r =
0,\dots, k-1$ by the formulas
\begin{eqnarray}\label{define-c's1}
\ \ \ \ \sum_{m,n\ge 0} c_{mn0} x^m y^n &=& -\frac{1}{2} \sum_{j = 1}^{k-1}
{\rm log} \left(\frac {(1+x)^{1/k} - \eta^{-j}
(1+y)^{1/k}}{1-\eta^{-j} }\right),\\
\sum_{m,n\ge 0} c_{mnr} x^m y^n &= & \frac{1}{2}{\rm log} \left( \frac
{(1+x)^{1/k} -\eta^{-r}
(1+y)^{1/k}}{1-\eta^{-r}}\right) \ \ \mbox{for}\ \ r \ne 0. \label{define-c's2}
\end{eqnarray}
Let
$\{\beta_1,\dots, \beta_{\dim \h}\}$ be an orthonormal basis of $\h$,
and set
\begin{equation}\label{define-Delta_x}
\Delta_x = \sum_{m,n\ge 0} \sum_{r=0}^{k-1} \sum^{\dim \h}_{j=1}
c_{mnr} (\nu^{-r} \beta_j)(m) \beta_j(n) x^{-m-n} .
\end{equation}
Then $e^{\Delta_x}$ is well defined on $V_L$ since $c_{00r}=0$ for all
$r$, and for $v\in V_L,$ $e^{\Delta_x}v\in V_L[x^{-1}]$.  Note that
$\Delta_x$ is independent of the choice of orthonormal basis.  Then $\hat{\nu} \Delta_x = \Delta_x \hat{\nu}$ and hence $\hat{\nu} e^{\Delta_x} = e^{\Delta_x} \hat{\nu}$ on $V_L$.

For $v\in V_L,$ the {\em $\hat{\nu}$-twisted vertex operator} $Y^{\hat{\nu}}(v,x)$ is
defined by
\begin{equation}\label{Ynuhat}
Y^{\hat{\nu}}(v,x)=W(e^{\Delta_x}v,x).
\end{equation}
Then this yields a well-defined linear map
\begin{equation}
V_L \longrightarrow (\mbox{End}\,V^T_L)[[x^{1/k},x^{-1/k}]], \quad  v
 \mapsto  Y^{\hat{\nu}}(v,x)= \sum_{n \in \frac{1}{k}\Z}v^{\hat{\nu}}_nx^{-n-1}
\end{equation}
where $v^{\hat{\nu}}_n\in \mbox{End}\,V^T_L$.  

From \cite{DL2}, \cite{Xu} we have that $(V_L^T, Y^{\hat{\nu}})$ is an irreducible $\hat{\nu}$-twisted $V_L$-module.

\section{An isomorphism between $V_{fer} \otimes V_{fer}$ and $V_{\mathbb{Z} \alpha}$}

In this section we present an isomorphism between the two free fermion vertex operator superalgebra $V_{fer} \otimes V_{fer}$ and the lattice vertex operator superalgebra $V_{\mathbb{Z} \alpha}$ with $\langle \alpha, \alpha \rangle = 1$.  The fact that these two vertex operator superalgebras are isomorphic is commonly referred to as ``boson-fermion correspondence" \cite{Frenkel}, \cite{FFR}.   More specifically this isomorphism is a  correspondence between two free fermions and a fermion constrained to the circle  $\mathbb{R}/\mathbb{Z}\alpha$.

To express this isomorphism, we polarize our two free fermions using the transformation 
\begin{equation}\label{polarize-again}
\alpha^{\pm} = \frac{1}{\sqrt{2}} (\alpha_{(1)} \mp i \alpha_{(2)})
\end{equation} 
or equivalently
$\alpha_{(1 )}= \frac{1}{\sqrt{2}} (\alpha^+ + \alpha^-)$ and $\alpha_{(2)} = \frac{i}{\sqrt{2}} (\alpha^+ - \alpha^-)$.
This polarization puts us in the setting of \cite{FFR}.  
In keeping with \cite{B-n2moduli}, \cite{B-n2axiomatic},  \cite{B-n2twisted}, \cite{B-varna}, we call $\alpha^{\pm}$ the ``homogeneous" basis for $\mathfrak{h} = \mathrm{span}_\mathbb{C}\{\alpha_1, \alpha_2\}$.  

Consider the vertex operator subalgebra of $V_{fer} \otimes V_{fer}$ generated by the vector $\alpha^+(-1/2) \alpha^-(-1/2)$.  Denote this vertex operator algebra by $\langle \alpha^+(-1/2) \alpha^-(-1/2) \rangle$.  In addition, consider the free, rank one bosonic vertex operator algebra $V_{bos} = S(\hat{\mathfrak{h}}_+) = \langle \alpha(-1) \rangle$.  Then $V_{bos}$ is isomorphic to $\langle \alpha^+(-1/2) \alpha^-(-1/2) \rangle$ as vertex operator algebras with isomorphism given by $\alpha(-1) \mapsto \alpha^+(-1/2) \alpha^-(-1/2)$.  

Then, for $n \in \mathbb{Z}$, the spaces $V_{bos} \otimes e^{n \alpha}$ are irreducible modules for $V_{bos}$, and $V_{\mathbb{Z} \alpha} = \coprod_{n \in \mathbb{Z}} V_{bos} \otimes e^{n \alpha}$.  An isomorphism $\varphi: V_L  \longrightarrow V_{fer} \otimes V_{fer}$ is given by 
\begin{equation}\label{define-phi}
\varphi: \begin{array}{lll}
1 \otimes e^{n  \alpha} & \mapsto &  \alpha^+ (-n+ 1/2) \alpha^+ (-n+3/2) \cdots \alpha^+(-1/2) \cdot 1  \\
1 \otimes e^{-n \alpha} & \mapsto & \alpha^- (-n + 1/2) \alpha^- (-n +3/2) \cdots \alpha^-(-1/2) \cdot 1 
\end{array}
\end{equation}
for $n \in \mathbb{Z}_+$.  

Here $e^{n\alpha}$ is chosen as a section of $\hat{L}$ for convenience of notation.  That is letting $e: L=\mathbb{Z}\alpha  \longrightarrow \hat{L}$, $e: n\alpha \mapsto e_{n\alpha}$
be a section of $\hat{L}$, this choice of section allows us to identify
$\mathbb{C}\{L\}$ with the group algebra $\mathbb{C}[L]$ by the linear
isomorphism
\begin{equation}
\mathbb{C}[L]  \longrightarrow  \mathbb{C}\{L\}, \quad
e^{n\alpha}  \mapsto  \iota(e_{n\alpha}).
\end{equation}
But in a slight abuse of notation, we write $e^{n\alpha}$ for $\iota(e_{n\alpha})$.

The vertex operator subalgebra $\langle \alpha^+(-1/2) \alpha^-(-1/2) \rangle$ consists of those vectors in $V_{fer} \otimes V_{fer}$ which have an equal number of positive and negative $\alpha^\pm (-m)$ terms for $m \in \mathbb{Z}_+ - \frac{1}{2}$.  Then the modules corresponding to $V_{bos} \otimes e^n$ consist of those vectors in $V_{fer} \otimes V_{fer}$ that have $n$ more positive terms than negative terms if $n>0$ and that have $n$ more negative terms than positive terms for $n<0$.

The isomorphism $V_{fer} \otimes V_{fer} \cong V_{\mathbb{Z}\alpha}$ implies that $V_{fer}^{\otimes 2d} \cong V_{\mathbb{Z} \alpha}^{\otimes d} $ for $d \in \mathbb{Z}_+$.  That is, the lattice vertex operator superalgebra corresponding to the orthogonal rank $d$ lattice $\bigoplus_{j = 1}^d \mathbb{Z} \alpha^{(j)}$ with $\langle \alpha^{(j)}, \alpha^{(k)} \rangle = \delta_{j,k}$ is isomorphic to the $2d$ free boson vertex operator superalgebra.

\section{Construction and classification of the $(1 \; 2)$-twisted $V_{fer} \otimes V_{fer}$-modules through boson-fermion correspondence and a conjecture}\label{(12)-twisted-section}

In this section, we use the isomorphism of $V_{fer} \otimes V_{fer} \cong V_{\mathbb{Z} \alpha}$ to construct the $(1 \; 2)$-twisted $V_{fer} \otimes V_{fer}$-modules by first transferring the signed permutation automorphism $(1 \; 2)$ on $V_{fer} \otimes V_{fer}$ to the corresponding automorphism $\varphi \circ (1\; 2) \circ \varphi^{-1}$ on $V_{\mathbb{Z} \alpha}$, observing that this automorphism is a lift of the $-1$ lattice isometry, and then using the construction of such twisted modules recalled in Section \ref{general-twisted-section}.  

The transposition $(1 \; 2)$ acting as a signed permutation on $V_{fer} \otimes V_{fer}$ is given by $(1 \; 2) : u \otimes v  \mapsto  (-1)^{|u||v|} v \otimes u$ for $u,v \in V_{fer}$.  In terms of the polarization (\ref{polarize-again}), this automorphism is given by 
\[(1 \; 2):  \alpha^\pm (-1/2) \mapsto \alpha^\mp (-1/2).\]

\subsection{The automorphism $\hat{\nu} = \varphi \circ (1\; 2) \circ \varphi^{-1}$ of $V_{\mathbb{Z} \alpha}$ corresponding to $(1 \; 2)$ on $V_{fer}$ }\label{corresponding-automorphism-section} 

Let $\varphi \circ (1\; 2) \circ \varphi^{-1}$ be the automorphism of $V_{\mathbb{Z} \alpha}$ corresponding to $(1 \; 2)$ on $V_{fer} \otimes V_{fer}$.  Then this automorphism is uniquely determined by  
\begin{eqnarray}\label{defining-nu-hat}
\varphi \circ (1\; 2) \circ \varphi^{-1} : \ \ \ \ \ \ \ \ V_{\mathbb{Z} \alpha} & \longrightarrow & V_{\mathbb{Z} \alpha}\\
\alpha(-1) 1 \otimes 1 & \mapsto & - \alpha(-1)1 \otimes 1 \nonumber \\
1 \otimes e^{n \alpha} & \mapsto & (-i)^n (1 \otimes e^{-n \alpha}) .\nonumber
\end{eqnarray}
In particular, letting $\nu$ be the lattice isometry
\begin{equation}
\nu : \mathbb{Z}\alpha  \longrightarrow  \mathbb{Z} \alpha, \quad n\alpha  \mapsto  -n \alpha, \nonumber
\end{equation}
then $\hat{\nu} = \varphi \circ (1\; 2) \circ \varphi^{-1}$ is a lift of the lattice isometry $\nu$ to a central extension $\Lnu$ of $L = \mathbb{Z}\alpha$ by the cycle group $\langle i \rangle$ of order 4.  

\subsection{Constructing the $\hat{\nu}$-twisted $V_{\mathbb{Z} \alpha}$-modules}\label{nu-hat-twisted-section}

We now specialize the construction of twisted modules for a lattice vertex operator superalgebra and a lift of a lattice isometry given in Section \ref{general-twisted-section} to the following setting:\\
$\bullet$ Let $L = \mathbb{Z} \alpha$ with $\langle \alpha, \alpha \rangle = 1$.\\
$\bullet$ Let $k = 4$ and let $\eta = \eta_0 = i$.  
\\
$\bullet$ Let $\nu = -1$ on $L = \mathbb{Z} \alpha$.\\
$\bullet$ Let $\hat{\nu} = \varphi \circ (1\; 2) \circ \varphi^{-1}$.

We follow Section \ref{general-twisted-section} to construct and classify the $\hat{\nu}$-twisted $V_{\mathbb{Z} \alpha}$-modules.

We have 
\begin{equation}\label{hn-specialized}
\h_{(0)} = \h_{(1)} = \h_{(3)} = 0  \quad \mathrm{and} \quad \h_{(2)} = \h,
\end{equation}
and $C(\alpha, \beta) = 1$,
for all $\alpha, \beta \in L$.  Furthermore, we have
$N=R = L$ and $M = 2L$. 

For $ a\in\hat{L}_\nu$ we have 
\begin{equation}\label{tau-def-specialized}
\tau(a \hat{\nu} a^{-1}) = i^{2\langle\bar{a},\bar{a}\rangle } = (-1)^{\langle\bar{a},\bar{a}\rangle}. 
\end{equation}
But in addition, from (\ref{defining-nu-hat}), we have, for $a  = e^{n\alpha}$,
\begin{equation}
\tau(a \hat{\nu} a^{-1}) = \tau(e^{n \alpha} \hat{\nu} e^{-n\alpha}) = \tau(e^{n \alpha} i^n e^{n \alpha}) = i^n \tau(e^{2n\alpha}).
\end{equation}
Therefore, 
\begin{equation}\label{tau-on-2L}
\tau(e^{2n\alpha}) = (-1)^{\langle n \alpha, n \alpha \rangle} i^{-n} = (-1)^{n^2} (-i)^n = i^n.
\end{equation}

Next we extend $\tau$ to $\hat{R} = \hat{L}_\nu$, thereby constructing an irreducible $\hat{L}_\nu$-module, $T$, on which $\hat{M} = 2\hat{L}_\nu$ acts as multiplication by the character $\tau$.   From Proposition \ref{extend-tau-prop}, there are exactly $|R/M| = |L/2L| = 2$ extensions of $\tau$ to a homomorphism $\chi:\hat{L}_\nu\rightarrow\mathbb{C}^{\times}$, and every irreducible $\hat{L}_\nu$-module on which $\hat{M}$ acts as $\tau$ is equivalent to one of these.  Furthermore, since $N=R = L$, we have $\chi = \psi$ in Proposition \ref{extend-tau-prop}, and thus the modules $T = \mathbb{C}_\psi = \mathbb{C}_\chi$ will be precisely these two modules. 

It is clear from (\ref{tau-on-2L}), that the two choices for $\chi$ are 
\begin{equation}\label{define-chi}
\chi_\pm: \hat{L}_\nu  \longrightarrow  \mathbb{C}^\times, \quad e^{n\alpha}  \mapsto  \pm (e^{\pi i /4})^n 
\end{equation}
for the primitive eighth root of unity $e^{\pi i/4}$ and $n \in \mathbb{Z}$.

Denote these two inequivalent irreducible $\hat{L}_\nu$-modules on which $\hat{L}_\nu$ act as $\chi_+$ and $\chi_-$, respectively, by $\mathbb{C}_+$ and $\mathbb{C}_-$, respectively.  Then we have two choices for $U_T$ up to isomorphism, namely $U_T = \mathbb{C}_\pm$.

Note that (\ref{right}) does reduce to $\hat\nu a =a i^{2 \langle \bar{a} , \bar{a} \rangle} = a (-1)^{\langle \bar{a}, \bar{a} \rangle} = (-1)^{|a|} a$, as operators on either $\mathbb{C}_\pm$, and $\hat{\nu}^2 = 1$ on $\hat{L}_\nu$.

Form the two $\hat{\nu}$-twisted $V_{\mathbb{Z} \alpha}$ modules 
\begin{equation}
M_\pm = S[\nu]\otimes\mathbb{C}_\pm \simeq S[\nu].
\end{equation}

Note that in this setting, we have  
\begin{equation}
\alpha^T (x)=\sum_{n\in \Z + \frac{1}{2}} \alpha^T (n)x^{-n-1}.
\end{equation}

Then the $\hat{\nu}$-twisted vertex operators are given by (\ref{L-operator}) and (\ref{Ynuhat}).  We denote these two different $\hat{\nu}$-twisted vertex operators by $Y_\pm^{\hat{\nu}}$ on $M\pm$, respectively.

And note that of course we have 
\begin{equation}
V_{\mathbb{Z} \alpha} \longrightarrow (\mbox{End}\, M_\pm)[[x^{1/2},x^{-1/2}]], \quad  \ v
\mapsto Y_\pm^{\hat{\nu}}(v,x)= \sum_{n \in \frac{1}{2}\Z}v^{\hat{\nu}, \pm}_n x^{-n-1}
\end{equation}
where $v^{\hat{\nu}, \pm}_n\in \mbox{End}\, M_\pm$.  

From (\ref{grade-vacuum}), we have
\begin{eqnarray}\label{grade-vacuum-specified}
{\rm wt}\,1 &=& \frac{1}{64} \sum_{j = 1}^{3} j (4-j) {\rm
dim}\,({\h}_{(j)}) = \frac{1}{16}.
\end{eqnarray}
Following (\ref{grade-U}), we have that the $\C$-gradation on $\mathbb{C}_\pm$ is zero, and thus the weight grading of $\mathbb{C}_\pm$ is $\frac{1}{16}$.  In fact, we have that 
\begin{equation}
L^{\hat{\nu}}_\pm (0) = \sum_{n \in \mathbb{N}} \alpha^T ( - n - 1/2) \alpha^T(n + 1/2) + \frac{1}{16},
\end{equation}
and thus 
\begin{equation}
\mathrm{dim}_q M_\pm = q^{-1/24} q^{1/16} \prod_{n \in \mathbb{Z}_+} (1 + q^{n/2}) = 
\frac{\eta(q)}{\eta(q^{1/2})} = \sqrt{2}^{-1}  \mathfrak{f}_2(q^{1/2}).
\end{equation}
In particular, $M_+$ and $M_-$ have the same graded dimension.  Note also that this graded dimension is the same as the graded dimension for the $g$-twisted free boson vertex operator algebra module where $g$ is uniquely determined by $-1$ on the generator; see for instance \cite{B-n2twisted} Section 5.2.  It is also the graded dimension with $q$ replaced by $q^{1/2}$ for either of the two unique up to equivalence irreducible parity-twisted $V_{fer}$-modules as constructed in Section \ref{parity-twisted-section}; see Remark \ref{compare-grading-remark}.

From \cite{DL2}, \cite{Xu} we have that $(M_\pm, Y_\pm^{\hat{\nu}})$ are each irreducible $\hat{\nu}$-twisted $V_{\mathbb{Z} \alpha}$-modules, and they are the only irreducible $\hat{\nu}$-twisted $V_{\mathbb{Z} \alpha}$-modules.  
In addition, although these two modules, $M_+$ and $M_-$ have the same graded dimension, 
they are not isomorphic to each other as $\hat{\nu}$-twisted $V_{\mathbb{Z} \alpha}$-
modules.  That is, if $f: (M_+ , Y_+^{\hat{\nu}}) \longrightarrow (M_- , Y_-^{\hat{\nu}})$ is a 
twisted module isomorphism, then $f \circ Y_+(v, x) \circ f^{-1} = Y^{\hat{\nu}}_-(v, x) = 
(-1)^{|v|} Y^{\hat{\nu}}_+(v,x )$ for all $v \in V$.  This would imply that there exists a 
$\mathbb{Z}_2$-grading on $M_\pm$, given by $M_\pm = M^{(0)}_\pm \oplus M^{(1)}_\pm$, 
such that $|Y_+(v, x) w| = (|v| + |w|) \mathrm{mod} \, 2$ for all $w \in M_+$, where $|w| =j$ 
for $w \in M_+^{(j)}$, for $j = 0,1$.
But if $w \in M_+$ is a nonzero vector which is homogeneous with respect to the 
$\mathbb{Z}_2$-grading, then writing $w = u \otimes t$ with $u \in S[\nu]$ and $t\in 
\mathbb{C}_+$, we have
\begin{eqnarray}
Y^{\hat{\nu}}_+(e^\alpha,x)w &=& \frac{1}{2} \rho(\alpha) \ \left(
_\circ^\circ
e^{\int(\alpha^T (x)-\alpha^T(0)x^{-1})}  \ _\circ^\circ  x^{-1/2} u \right) \otimes \chi_+(e^\alpha) \cdot t \\
&=& r \left(
_\circ^\circ
e^{\int(\alpha^T (x)-\alpha^T(0)x^{-1})}  \ _\circ^\circ  x^{-1/2} u \right) \otimes t  \nonumber
\end{eqnarray}
for a constant $r \in \mathbb{C}^\times$.  
Taking $r^{-1}\mathrm{Res}_{x} x^{-1/2}$ of both sides we obtain $u \otimes t = w$.  
Since $|e^{\alpha}| = 1$, this implies that $|w| = (1 + |w|) \mathrm{mod} \, 2$, a contradiction.  Thus there exists no such $\mathbb{Z}_2$-grading that would give an isomorphism between $M_+$ and $M_-$.

It follows that

\begin{lem}
The modules $M_\pm$ are isomorphic as ordinary parity-unstable $( 1 \; 2)$-twisted $V_{fer}^{\otimes 2}$-modules to the modules $M_{(1 \; 2)}^\pm$ presented in Section \ref{permutation-dong-zhao-section} following \cite{DZ}.  
\end{lem}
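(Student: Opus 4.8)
The plan is to transport the $\hat{\nu}$-twisted $V_{\mathbb{Z}\alpha}$-module structures on $M_\pm$ through the boson-fermion isomorphism $\varphi$ of Section~\ref{(12)-twisted-section}, and then identify the resulting objects with $M_{(1\;2)}^\pm$ via the classification of the latter. Since $\varphi : V_{\mathbb{Z}\alpha} \to V_{fer}\otimes V_{fer}$ is an isomorphism of vertex operator superalgebras and, by the definition of $\hat{\nu}$ in Section~\ref{corresponding-automorphism-section}, $\varphi$ intertwines $\hat{\nu}$ with $(1\;2)$, the standard transport-of-structure argument shows that $v \mapsto Y^{\hat{\nu}}_\pm(\varphi^{-1}(v),x)$ satisfies the twisted Jacobi identity for the automorphism $(1\;2)$ of $V_{fer}^{\otimes 2}$. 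Thus $(M_\pm,\, Y^{\hat{\nu}}_\pm \circ \varphi^{-1})$ are weak $(1\;2)$-twisted $V_{fer}^{\otimes 2}$-modules; they are irreducible because the $M_\pm$ are, and they are ordinary because $\varphi$ preserves the conformal vector, so the $L^{\hat{\nu}}_\pm(0)$-grading of $M_\pm$ transports to a grading by $L^{(1\;2)}(0)$-eigenvalues with finite-dimensional homogeneous pieces bounded below.

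Next I would note that these modules are parity-unstable. This is already contained in the computation carried out above for $Y^{\hat{\nu}}_+(e^\alpha,x)$: the same $\mathrm{Res}_x\, x^{-1/2}$ argument shows that neither $M_+$ nor $M_-$ admits a $\mathbb{Z}_2$-grading compatible with the sign grading of $V_{\mathbb{Z}\alpha}$, and since parity stability is intrinsic and preserved by $\varphi$, the transported modules are parity-unstable $(1\;2)$-twisted $V_{fer}^{\otimes 2}$-modules; the same computation also gives $(M_+,\, Y^{\hat{\nu}}_+\circ\varphi^{-1}) \not\cong (M_-,\, Y^{\hat{\nu}}_-\circ\varphi^{-1})$. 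Now I would invoke the classification: by \cite{DZ}, as recalled in Section~\ref{permutation-dong-zhao-section}, $M_{(1\;2)}^+$ and $M_{(1\;2)}^-$ are, up to isomorphism, the only irreducible admissible parity-unstable $(1\;2)$-twisted $V_{fer}^{\otimes 2}$-modules, and they are mutually non-isomorphic. Hence the pair of inequivalent irreducible ordinary parity-unstable modules $\{(M_+,\,Y^{\hat{\nu}}_+\circ\varphi^{-1}),\,(M_-,\,Y^{\hat{\nu}}_-\circ\varphi^{-1})\}$ coincides modulo isomorphism with $\{M_{(1\;2)}^+,\, M_{(1\;2)}^-\}$. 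As a consistency check this agrees with the graded-dimension computations, both pairs having graded dimension $\tfrac{1}{\sqrt{2}}\,\mathfrak{f}_2(q^{1/2})$.

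It remains to pin down which of $M_+, M_-$ corresponds to which of $M_{(1\;2)}^\pm$. For this I would compare the eigenvalue of a distinguished zero mode on the one-dimensional lowest-weight space of each module. On $M_{(1\;2)}^\pm$ the lowest-weight vector $(1 \pm \epsilon(0)^g 1)\otimes 1$, with $\epsilon$ a generator of $\mathfrak{h}^{0*}$, is an eigenvector of the $x^{-1/2}$-coefficient of $Y^g(\epsilon(-1/2)\mathbf{1},x)$ whose eigenvalue has sign $\pm$, exactly as in the parity-twisted identity $Y^\sigma(\epsilon(-1/2)\mathbf{1},x)(1\pm\overline{\epsilon(0)}1) = \pm(1\pm\overline{\epsilon(0)}1)x^{-1/2} + \cdots$ of Section~\ref{parity-twisted-section}. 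On the lattice side one translates $\epsilon(-1/2)\mathbf{1}$ through $\varphi^{-1}$ into an explicit vector of $V_{\mathbb{Z}\alpha}$ built from the classes $\iota(e^{\pm\alpha})$ (the direction $\mathfrak{h}^{0*}$ being carried by $\varphi$ to the non-$\hat{\nu}$-fixed part of $\h$), evaluates the corresponding coefficient of $Y^{\hat{\nu}}_\pm(\,\cdot\,,x)$ on $1\otimes t \in M_\pm = S[\nu]\otimes\mathbb{C}_\pm$ using (\ref{L-operator}), (\ref{Ynuhat}) and the characters $\chi_\pm$ of (\ref{define-chi}), and reads off the sign of the eigenvalue; after normalization this yields $M_\pm \cong M_{(1\;2)}^\pm$.

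The first two steps are formal (transport of structure plus the quoted classification and the already-computed graded dimensions), so \emph{the main obstacle is the last step}: making the boson-fermion translation of $\epsilon(-1/2)\mathbf{1}$ concretely and pushing the relevant mode through the lattice twisted-vertex-operator formula (\ref{L-operator})--(\ref{Ynuhat}) carefully enough to extract the correct sign, keeping track of the normalization constants $k^{-\langle\bar a,\bar a\rangle/2}\rho(\bar a)$ and of $\chi_\pm$.
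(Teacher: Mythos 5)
Your proposal matches the paper's own (largely implicit) argument: the paper establishes that $M_\pm$ are the only two irreducible $\hat{\nu}$-twisted $V_{\mathbb{Z}\alpha}$-modules, that they are mutually non-isomorphic, and that they are parity-unstable (via exactly the $\mathrm{Res}_x\, x^{-1/2}$ computation with $Y^{\hat{\nu}}_+(e^\alpha,x)$ that you cite), and then states the lemma as an immediate consequence of transport of structure through $\varphi$ together with the classification from \cite{DZ} of irreducible parity-unstable $(1\;2)$-twisted $V_{fer}^{\otimes 2}$-modules recalled in Section \ref{permutation-dong-zhao-section}. The sign-matching step you single out as the main obstacle is not actually needed, since the $\pm$ labels on both sides are conventional choices (of the character $\chi_\pm$ and of the lowest-weight vector $1\pm\epsilon(0)^g 1$), and the paper accordingly does not carry it out.
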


\begin{rema}
{\em  We observe that using the construction of \cite{DZ}, one first constructs a parity-stable irreducible $( 1 \; 2)$-twisted $V_{fer}^{\otimes 2}$-module and then identifies two invariant subspaces which are parity-unstable irreducible $( 1 \; 2)$-twisted $V_{fer}^{\otimes 2}$-modules.  However using boson-fermion correspondence and the theory of lattice vertex operator superalgebras, one first directly constructs a pair of parity-unstable irreducible $( 1 \; 2)$-twisted $V_{fer}^{\otimes 2}$-modules.   In addition, from the lattice construction it is immediately obvious that $(M_+, Y^{\hat{\nu}}_+)$ is isomorphic to $(M_-, Y_-^{\hat{\nu}} \circ \sigma_V)$ as parity-unstable $\hat{\nu}$-twisted modules, wheres the isomorphism is less straightforward in the \cite{DZ} construction.
}
\end{rema}

\subsection{A conjecture}   For $k>2$ even, if one tries to directly lift $(1 \;2 \; \cdots \; k )$ to the lattice vertex operator superalgebra $V_{\Z \alpha}^{\otimes k}  = V_{\Z \alpha \oplus \cdots \oplus \Z \alpha}$ in the obvious way extending what we did in the case $k = 2$ in the last section, one does not get a lift of a lattice isometry.  

For example if one tries to directly lift, say $(1 \;2 \; 3 \; 4)$ to the lattice vertex operator superalgebra $V_{\Z \alpha}^{\otimes 2}  = V_{\Z \alpha \oplus \Z \alpha}$ in the obvious way, we have that for instance
\begin{multline}\label{4-cycle}
(\varphi \otimes \varphi) \circ (1 \; 2 \; 3 \; 4) \circ (\varphi \otimes \varphi)^{-1} :  (\alpha(-1)1 \otimes 1) \otimes (1 \otimes 1) \\
\mapsto \frac{1}{2} \left( \left(1 \otimes (e^{\alpha} + e^{-\alpha})\right) \otimes \left(1 \otimes (e^{\alpha} - e^{-\alpha})\right)\right).
\end{multline}

However, we note that all the  $(1 \; 2 \; \cdots \; k)$-twisted $V^{\otimes k}_{fer}$-modules, $M_{(1 \; 2 \; \cdots \; k)}^\pm$, of Section \ref{permutation-dong-zhao-section} for $k$ even, have a structure that would imply that they could be realized as $g$-twisted $V_{\Z \alpha}^{\otimes k/2}$-modules for $g$ a lift of some lattice isometry.   In particular, they look like $S[\nu] \otimes \mathbb{C}_\pm$ for $S$ a symmetric algebra and $\mathbb{C}_\pm$ a one dimensional space on which the odd generating operator acts as $\pm c$ for a constant $c$.  

This, as well as recent constructions of other permutation-twisted modules for free fermions given by the second author, lead us to the following conjecture:

\begin{conj}  The $(1 \; 2\; \cdots \; k)$ permutation automorphism of $V_{fer}^{\otimes k}$ for $k$ even is conjugate to a lift of a lattice isometry on $V_{\mathbb{Z} \alpha}^{\otimes k/2}$ via boson-fermion correspondence. 
\end{conj}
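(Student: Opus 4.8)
The plan is to transport the permutation automorphism to the bosonic side and then realize it, up to conjugacy, inside the automorphism group of the lattice vertex operator superalgebra. Write $\Phi = \varphi^{\otimes k/2} : V_{\mathbb{Z}\alpha}^{\otimes k/2} \to V_{fer}^{\otimes k}$ for the boson-fermion isomorphism built from (\ref{define-phi}), pairing the fermions $(2j-1,2j)$ into the $j$-th lattice factor for $j = 1,\dots,k/2$, and set $g = (1\;2\;\cdots\;k)$ and $\hat g = \Phi^{-1}\circ g\circ\Phi \in \mathrm{Aut}\big(V_{\mathbb{Z}\alpha}^{\otimes k/2}\big)$. Since the signs accumulated by $g^k$ contribute $(-1)^{k(k-1)}=1$, the order of $g$, hence of $\hat g$, is exactly $k$. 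First I would compute $\hat g$ on the generators of $V_{\mathbb{Z}\alpha}^{\otimes k/2}$, namely the Heisenberg vectors $\alpha^{(j)}(-1)$ and the lattice points $e^{\alpha^{(j)}}$: its action is forced by (\ref{define-phi}), (\ref{define-tensor-product}) and the fermionic relation $g\,\alpha^{(i)} = \alpha^{(i-1)}$, with the $k=4$ instance displayed in (\ref{4-cycle}). As that example makes visible, $\hat g$ intertwines the lattice factors and is \emph{not} a lift of the coordinate permutation, so the isometry to be found must be a different element of $O(L)$, where $L = \bigoplus_{j=1}^{k/2}\mathbb{Z}\alpha^{(j)}$.

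Next I would guess the isometry: take $\nu$ to be a negative signed $(k/2)$-cycle of the basis $\{\alpha^{(1)},\dots,\alpha^{(k/2)}\}$, so that $\nu$ has order $k$ and $\nu^{k/2} = -1$ on $L$; since $\langle\alpha^{(j)},\alpha^{(j)}\rangle = 1$ is odd this yields $\langle\nu^{k/2}\alpha,\alpha\rangle\in 2\mathbb{Z}+|\alpha|$, so by Remark \ref{doubling-k-remark} a lift $\hat\nu$ of order exactly $k$ exists (no doubling is forced, in contrast to the coordinate permutation on $K^{\oplus k}$). Fix such a lift $\hat\nu$ to $\Lnu$ as in Section \ref{lattice-vosa-section}. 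I would then show $\hat g$ is conjugate to $\hat\nu$ by invoking the structure of $\mathrm{Aut}(V_L)$ for $L$ positive definite with abelian weight-one space: every sign-grading preserving automorphism is, up to an inner automorphism $\sigma_h = \exp\big(2\pi i\,h_{(0)}\big)$ with $h\in\h$, the lift of some element of $O(L)$. Applying this to $\hat g$ gives $\hat g = \sigma_h\,\hat\mu$ for a uniquely determined $\mu\in O(L)$ and some $h\in\h$; matching orders and the induced action on the weight-$\tfrac12$ space (which $\Phi$ identifies with the fermionic $\h$, where $\hat g$ is the cyclic shift) pins $\mu$ down to be conjugate in $O(L)$ to $\nu$, and one absorbs the torus factor by conjugating with a further $\sigma_{h'}$, using $\sigma_{h'}\hat\mu\sigma_{h'}^{-1} = \sigma_{(1-\mu)h'}\hat\mu$ to solve $(1-\mu)h' = h$ — which is possible once one checks that $h$ has no component in the $\mu$-fixed subspace $\h^{\mu}$. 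Tracking the discrete cocycle data ($\tau$ in (\ref{tau-def}) and the roots of unity $\eta,\eta_0$) then identifies the resulting conjugate of $\hat g$ with $\hat\nu$ for a valid lift.

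The hard part, I expect, is exactly this second step: controlling $\hat g$ precisely enough inside $\mathrm{Aut}(V_L)$ to separate the genuine lattice-isometry part from the continuous torus part, and verifying that no irremovable torus character survives — equivalently, that the Bogoliubov angles produced by re-expressing the cyclic shift of $k$ fermions in the paired bosonic basis are all rational with the right denominator. As a parallel consistency check I would compare twisted modules: the $(1\;2\;\cdots\;k)$-twisted $V_{fer}^{\otimes k}$-modules $M_{(1\;2\;\cdots\;k)}^{\pm}$ of Section \ref{permutation-dong-zhao-section} already have the shape $S[\nu]\otimes\mathbb{C}_{\pm}$, matching the $\hat\nu$-twisted $V_L$-modules $V_L^T = S[\nu]\otimes U_T$ of Section \ref{general-twisted-section} with $U_T = \mathbb{C}_{\pm}$ (generalizing the $k=2$ case of Section \ref{nu-hat-twisted-section} and equations (\ref{define-chi})--(\ref{Ynuhat})); showing that the twisted vertex operators (\ref{L-operator}), (\ref{Ynuhat}) on $V_L^T$ transport under $\Phi$ to the operators (\ref{define-tau-twisted}) built in Section \ref{permutation-dong-zhao-section} would, together with the classification of irreducible twisted modules on each side, give independent evidence — and in favorable cases a second route to — the conjugacy.
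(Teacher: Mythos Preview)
The statement you are attempting to prove is presented in the paper as a \emph{conjecture}, not a theorem; the paper gives no proof. What the paper does supply is the verified base case $k=2$ (Section \ref{(12)-twisted-section}), the computation (\ref{4-cycle}) showing the obvious transport for $k=4$ fails to be a lattice-isometry lift, and the structural observation that the $M_{(1\;2\;\cdots\;k)}^{\pm}$ already look like $S[\nu]\otimes\mathbb{C}_{\pm}$. There is therefore nothing to compare your argument against; you are proposing a line of attack on an open problem.

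As a strategy your outline is sensible and in fact tracks the paper's own heuristics closely: the candidate isometry (a negative signed $(k/2)$-cycle, so that $\nu^{k/2}=-1$ and Remark \ref{doubling-k-remark} does not force doubling) is the natural guess, it specializes correctly to $\nu=-1$ when $k=2$, and the twisted-module comparison you sketch is precisely the evidence the paper cites. But what you have written is not a proof, and you say so yourself. The structural input you rely on --- that every parity-preserving automorphism of $V_L$ factors as a torus element times a lift of a lattice isometry --- is a theorem for even positive-definite $L$ but would need to be checked or adapted for the odd lattice $\mathbb{Z}^{k/2}$ here. More seriously, even granting that decomposition, you still have to \emph{compute} which $\mu\in O(L)$ actually arises from $\hat g$, verify it is conjugate to your $\nu$, and show the residual torus character $h$ has no $\mu$-fixed component so it can be conjugated away. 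None of these steps is carried out; the first already requires controlling $\hat g$ on all of weight one and all lattice generators, which (\ref{4-cycle}) shows is nontrivial. Your proposal is a reasonable plan, but it remains a plan: the conjecture is still open after it.
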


Note that from (\ref{4-cycle}), this conjecture is nontrivial.  In addition, we stress that although the permutation automorphisms for free fermions can be realized as lifts of isometries on $\mathfrak{h}$ as in \cite{DZ} and Section \ref{permutation-dong-zhao-section}, this conjecture goes further to state that they can be be realized as lifts of isometries on the lattice underlying the purely bosonic part of the vertex operator superalgebra.  This is a much stronger statement, and allows for the full theory of twisted modules for lattice vertex operator superalgebras to come to bear.  

\section{Construction and classification of the $\sigma \circ (1 \; 2)$-twisted $V_{fer} \otimes V_{fer}$-modules}\label{sigma-2-section}

Let $\hat{\nu} = \varphi \circ (1\; 2) \circ \varphi^{-1}$ be the automorphism of $V_{\mathbb{Z} \alpha}$ corresponding to $(1 \; 2)$ on $V_{fer} \otimes V_{fer}$ given explicitly by (\ref{defining-nu-hat}), where $\varphi$ is the isomorphism between $V_{fer} \otimes V_{fer}$ and $V_{\mathbb{Z} \alpha}$ given by (\ref{define-phi}).  Then the automorphism $\sigma\circ (1\;2)$ of $V_{fer} \otimes V_{fer}$ corresponds to the automorphism $\sigma \circ \hat{\nu} = \varphi \circ \sigma \circ (1\;2) \circ \varphi^{-1}$ given by   
\begin{eqnarray}\label{defining-sigma-nu-hat}
\varphi \circ \sigma \circ (1\; 2) \circ \varphi^{-1} : \ \ \ \ \ \ \ \ V_{\mathbb{Z} \alpha} & \longrightarrow & V_{\mathbb{Z} \alpha}\\
\alpha(-1) \otimes 1 & \mapsto & - \alpha(-1) \nonumber \\
1 \otimes e^{n \alpha} & \mapsto & i^n (1 \otimes e^{-n \alpha} ).\nonumber
\end{eqnarray}
Then $\sigma \circ \hat{\nu} = \varphi \circ \sigma \circ (1\; 2) \circ \varphi^{-1}$ is also a lift of the lattice isometry $\nu = -1$ to a central extension of $L = \mathbb{Z}\alpha$ by the cycle group $\langle i \rangle$ of order 4.   Repeating the construction of Section \ref{nu-hat-twisted-section}, we have that there are exactly two inequivalent irreducible $\sigma \circ \hat{\nu}$-twisted $V_{\mathbb{Z} \alpha}$ given by 
\begin{equation}
M^\sigma_\pm = S[\nu]\otimes\mathbb{C}^\sigma_\pm \simeq S[\nu].
\end{equation}
where $\mathbb{C}_\pm^\sigma$ are the irreducible $\hat{L}_{\nu}$-modules constructed as follows:  Define the characters
\begin{equation}
\chi^\sigma_\pm: \hat{L}_\nu  \longrightarrow  \mathbb{C}^\times, \quad e^{n\alpha}  \mapsto  \pm (e^{3\pi i /4})^n
\end{equation}
for the primitive eighth root of unity $e^{3\pi i/4}$, cf. (\ref{define-chi}).
Denote the two inequivalent irreducible $\hat{L}_\nu$-modules on which $\hat{L}_\nu$ act as $\chi_+^\sigma$ and $\chi_-^\sigma$, respectively, by $\mathbb{C}^\sigma_+$ and $\mathbb{C}^\sigma_-$, respectively. 

\begin{prop}
The modules $M_\pm$ are isomorphic as ordinary parity-unstable $\sigma ( 1 \; 2)$-twisted $V_{fer}^{\otimes 2}$-modules to the modules $M_{\sigma (1 \; 2)}^\pm$  following \cite{DZ}, and are the only irreducible parity-unstable $\sigma ( 1 \; 2)$-twisted $V_{fer}^{\otimes 2}$-modules up to equivalence.  
\end{prop}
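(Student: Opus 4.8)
The plan is to run, step for step, the argument of Section~\ref{nu-hat-twisted-section} and of the preceding Lemma, with the lift $\hat{\nu}$ replaced throughout by $\sigma\circ\hat{\nu}$. First I would transport the problem along the isomorphism $\varphi$ defined in (\ref{define-phi}): since $\varphi$ intertwines $\sigma\circ(1\;2)$ on $V_{fer}^{\otimes 2}$ with $\sigma\circ\hat{\nu}=\varphi\circ\sigma\circ(1\;2)\circ\varphi^{-1}$ on $V_{\mathbb{Z}\alpha}$, pullback along $\varphi$ gives an equivalence between the category of (weak, admissible, ordinary, parity-unstable) $\sigma\circ(1\;2)$-twisted $V_{fer}^{\otimes 2}$-modules and the corresponding category of $\sigma\circ\hat{\nu}$-twisted $V_{\mathbb{Z}\alpha}$-modules. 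It therefore suffices to classify the latter.

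Second, $\sigma\circ\hat{\nu}$ is again a lift of the lattice isometry $\nu=-1$ on $L=\mathbb{Z}\alpha$, so I would apply the construction and classification of Section~\ref{general-twisted-section} with the same lattice data $k=4$, $\eta=\eta_0=i$, $N=R=L$, $M=2L$ used in Section~\ref{nu-hat-twisted-section}. The only ingredient that changes is the homomorphism $\tau$: combining the intrinsic formula (\ref{tau-def}) (which depends only on $\nu,k,\eta$, hence is unchanged) with the explicit action $1\otimes e^{n\alpha}\mapsto i^{n}(1\otimes e^{-n\alpha})$ of (\ref{defining-sigma-nu-hat}) yields $\tau(e^{2n\alpha})=(-i)^{n}$, so that the $|R/M|=2$ extensions of $\tau$ to $\hat{L}_\nu$ (Proposition~\ref{extend-tau-prop}) are the characters $\chi^\sigma_\pm$ built from the primitive eighth root $e^{3\pi i/4}$, producing $M^\sigma_\pm=S[\nu]\otimes\mathbb{C}^\sigma_\pm$. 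By \cite{DL2}, \cite{Xu} these are irreducible $\sigma\circ\hat{\nu}$-twisted $V_{\mathbb{Z}\alpha}$-modules and they exhaust the irreducibles; the $L^{\hat{\nu}}(0)$-grading, ordinariness, and the common graded dimension $\sqrt{2}^{-1}\mathfrak{f}_2(q^{1/2})$ come out exactly as in Section~\ref{nu-hat-twisted-section} (the relevant data $P_0L=0$ and $\mathrm{wt}\,1=\tfrac1{16}$ are the same). Parity-instability of $M^\sigma_\pm$ follows by repeating the $Y^{\hat{\nu}}(e^\alpha,x)$ argument verbatim: for a nonzero $\mathbb{Z}_2$-homogeneous $w=u\otimes t$, applying $r^{-1}\mathrm{Res}_x x^{-1/2}$ to $Y^{\hat{\nu}}(e^\alpha,x)w$ returns $w$, and since $\langle\alpha,\alpha\rangle=1$ forces $|e^\alpha|=1$, one gets $|w|=(1+|w|)\bmod 2$, a contradiction. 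Together with Theorem~\ref{parity-stability-theorem} this shows $M^\sigma_\pm$ are the only irreducible parity-unstable $\sigma\circ(1\;2)$-twisted $V_{fer}^{\otimes 2}$-modules.

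Third, I would identify $M^\sigma_\pm$, viewed via $\varphi$ as $\sigma(1\;2)$-twisted $V_{fer}^{\otimes 2}$-modules, with the Dong--Zhao-style modules $M^\pm_{\sigma(1\;2)}$ obtained by running the construction of Section~\ref{permutation-dong-zhao-section} with $\sigma\circ(1\;2)$ in place of $(1\;2)$ (the effect of the extra $\sigma$ being only to change the scalar by which the distinguished odd zero-mode operator acts on the one-dimensional sector). Both families have the shape (symmetric algebra on half-integer fermionic modes) $\otimes$ (one-dimensional space on which a distinguished odd operator acts by $\pm c$), so the remaining task is to match the distinguished fermionic generating fields $\varphi(\alpha^\pm(-1/2)\mathbf{1})$ on the lattice side, computed from $Y^{\hat{\nu}}(e^{\pm\alpha},x)$ via (\ref{L-operator})--(\ref{Ynuhat}), against the fields $\alpha^{(j)}(x)^g$ of Section~\ref{permutation-dong-zhao-section}, checking that the lattice normalizations $\rho(\alpha)$ and $k^{-\langle\bar{a},\bar{a}\rangle/2}$ reproduce the prefactors there and that $\chi^\sigma_\pm(e^\alpha)$ is the $\pm c$ eigenvalue; since $V_{fer}^{\otimes 2}$ is generated by $\alpha^{(1)}(-1/2)\mathbf{1}$ and $\alpha^{(2)}(-1/2)\mathbf{1}$, agreement on these fields propagates to all of $V_{fer}^{\otimes 2}$. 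The main obstacle is exactly this last identification: it is the same bookkeeping that underlies the preceding Lemma, and the delicate points are reconciling the two normal-ordering/normalization conventions (the $\rho(\alpha)$, $k^{-\langle\bar{a},\bar{a}\rangle/2}$ and $\Delta_x$ corrections of Section~\ref{general-twisted-section} versus the direct Clifford-algebra normal ordering of Section~\ref{permutation-dong-zhao-section}) and confirming that the two sign choices pair up as $+\leftrightarrow+$ and $-\leftrightarrow-$ rather than getting swapped.
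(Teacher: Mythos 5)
Your proposal is correct and follows essentially the same route as the paper, which proves this proposition implicitly by ``repeating the construction of Section \ref{nu-hat-twisted-section}'' with $\hat{\nu}$ replaced by $\sigma\circ\hat{\nu}$: the intrinsic formula (\ref{tau-def}) is unchanged, the modified action $1\otimes e^{n\alpha}\mapsto i^{n}(1\otimes e^{-n\alpha})$ gives $\tau(e^{2n\alpha})=(-i)^{n}$ and hence the characters $\chi^\sigma_\pm$ built from $e^{3\pi i/4}$, and the irreducibility, exhaustion, parity-instability, and identification with the Dong--Zhao modules carry over verbatim. Your computation of $\tau$ and of the two extensions matches the paper's, and your explicit flagging of the normalization bookkeeping in the final identification is, if anything, more careful than what the paper records.
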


We further conjecture that in general, for $k$ even, (not just two) that $\sigma \circ (1 \; 2 \; \cdots \; k)$ can be realized as a conjugate of a lift of a lattice isometry under boson-fermion correspondence.

\end{document}